\newcommand{\beq}{\begin{equation}}
\newcommand{\eeq}{\end{equation}}
\newcommand{\beqs}{\begin{equation*}}
\newcommand{\eeqs}{\end{equation*}}
\newcommand{\ben}{\begin{eqnarray}}
\newcommand{\een}{\end{eqnarray}}
\newcommand{\beno}{\begin{eqnarray*}}
\newcommand{\eeno}{\end{eqnarray*}}
\renewcommand{\Re}{{\rm Re}\,}
\renewcommand{\Im}{{\rm Im}\,}
\DeclareMathOperator{\Ran}{Ran}
\DeclareMathOperator{\Span}{Span}
\DeclareMathOperator{\Div}{div}
\newcommand{\Rmnum}[1]{\uppercase\expandafter{\romannumeral #1} }
 \numberwithin{equation}{section}
\DeclarePairedDelimiterX{\inp}[2]{\langle}{\rangle}{#1, #2}
\newtheorem{thm}{Theorem}[section]
\newtheorem{lem}[thm]{Lemma}
\newtheorem{prop}[thm]{Proposition}
\newtheorem{rmk}[thm]{Remark}
\def \d {\mathrm {d}}
\def\cB{{\mathcal B}}
\def\cC{{\mathcal C}}
\def\cF{{\mathcal F}}
\def\cM{{\mathcal M}}
\def\cN{{\mathcal N}}
\def\cO{{\mathcal O}}
\def\cU{{\mathcal U}}
\let\f=\frac
\def \p {\partial}
\def\mC {\mathbb{C}}
\def\mR {\mathbb{R}}
\def\mN {\mathbb{N}}
\def\mZ {\mathbb{Z}}
\def \pt {\partial_{t}}
\def \diag {\textnormal{diag}}
\def \uu {\underline{u}}
 \def \ue {\underline{E}}
\def \uU {\underline{U}}
\title{Spectral instability of small-amplitude periodic waves\\of the electronic Euler-Poisson system}
\author{Pascal Noble}
\address{Universit\'e de Toulouse, CNRS, IMT - UMR 5219, INSA, F-31077 Toulouse, France}
\email{pascal.noble@math.univ-toulouse.fr}
\author{Luis Miguel Rodrigues}
\address{Univ Rennes \& IUF, CNRS, IRMAR - UMR 6625, F-35000 Rennes, France}
\email{luis-miguel.rodrigues@univ-rennes1.fr}
\author{Changzhen Sun*}
\address{Universit\'e de Toulouse, CNRS, IMT - UMR 5219, UPS, F-31062 Toulouse Cedex 9, France}
\email{changzhen.sun@math.univ-toulouse.fr}
\thanks{* Corresponding author}  
\date{\today}
\begin{document}

\maketitle

\begin{abstract}
\noindent The present work shows that essentially all small-amplitude periodic traveling waves of the electronic Euler-Poisson system are spectrally unstable. This instability is neither modulational nor co-periodic, and thus requires an unusual spectral analysis and, beyond specific computations, newly devised arguments. The growth rate with respect to the amplitude of the background waves is also provided when 
the instability occurs. 

\vspace{2em}

\noindent{\it Keywords}: Euler-Poisson system; periodic traveling waves ; spectral instability ; harmonic limit ; Hamiltonian dynamics ; Krein signature ; modulation systems ; plasma dynamics.

\vspace{2em}

\noindent{\it 2010 MSC}: 35B10, 35B35, 35P05, 35Q35, 37K45.

\end{abstract}

\section{Introduction}

The present work achieves a two-fold goal. On one hand, we are interested in understanding plasma dynamics, in its own right. On the other hand, we intend to contribute to the young but rapidly growing field studying spectral instabilities of periodic traveling waves that are neither modulational nor co-periodic.

\subsection{The Euler-Poisson system}

We consider the electronic Euler-Poisson system without magnetic field, a hydrodynamical model of plasma which describes the dynamics of electrons, coupled to a background density of ions through a self-consistent electric field. In such a model, one neglects the ions motion, to account for the time-scale separation induced by the small mass ratio between electrons and ions.

The dynamics of electrons is then described by the following Euler-Poisson system
\begin{equation}\label{ep3d}
\left\{
\begin{array}{rl}
\displaystyle
\partial_t \rho+\Div(\rho{\bf u})&=0\,,\vspace{2mm}\\
\displaystyle
\partial_t(\rho{\bf u})+\Div(\rho{\bf u}\otimes{\bf u})+\nabla P(\rho)&=\rho\nabla\phi\,,\vspace{2mm}\\
\displaystyle
\epsilon\,\Delta\phi&=\rho-\rho_{i}\,.
\end{array}
\right.
\end{equation}
Here the unknowns $\rho(t,{\bf x})\in\mathbb{R}^+$, ${\bf u}(t,{\bf x})\in\mathbb{R}^3$, $\nabla\phi(t,{\bf x})\in \mathbb{R}^3$ are the electron density, the electron velocity and the self-consistent electric field at time $t\in\mathbb{R}$ and position ${\bf x}\in\mathbb{R}^3$. The reference ion density $\rho_{i}$ is fixed and could depend on ${\bf x}$ but not on $t$. In the present contribution, we assume $\rho_{i}$ to be constant so that the system remains invariant by spatial translations and it makes sense to consider traveling-wave solutions. The thermal pressure of electrons $P(\rho)$ is often assumed to follow a polytropic $\gamma$ law, $P(\rho)=T\rho^\gamma$ with $T\in\mathbb{R}^+$ and  $\gamma\geq 1$. The constant $\epsilon$ is the square of the quasi-neutral parameter, a multiple of the Debye length. The associated dynamics preserves curl-free constraints on velocity fields and, when restricted to irrotational velocities, System~\eqref{ep3d} admits a Hamiltonian formulation
\[
\p_t\begin{pmatrix}\rho\\{\bf u}\end{pmatrix}
\,=\,\mathcal{J}\delta_{(\rho,{\bf u})}{\bf H}[\rho,{\bf u}]
\]
with skew-adjoint operator $\mathcal{J}$ and Hamiltonian density ${\bf H}$ given by
\begin{align*}
\mathcal{J}\,&:=\,\begin{pmatrix}0&-\Div\\-\nabla&0\end{pmatrix}\,,&
{\bf H}[\rho,{\bf u}]
&:=\frac12\rho\,\|{\bf u}\|^2+F(\rho)+\epsilon\,\|\nabla\phi\|^2\,,
\end{align*}
where $\delta$ denotes the variational derivative, the internal energy $F$ satisfies $F''(\rho)=P'(\rho)/\rho$ and $\phi=\epsilon^{-1}\,\Delta^{-1}(\rho-\rho_i)$ is thought as a function of $\rho$.

Note that when, as here, $\rho_{i}$ is taken to be constant, a steady constant solution is obtained by setting $\rho\equiv\rho_i$ and taking ${\bf u}$ constant and $\phi$ zero. Local well-posedness in suitable Sobolev spaces near such constant states requires $P'(\rho_i)$ to be nonnegative and we assume its positivity throughout. There is a large body of literature concerning the stability of the foregoing constant states. We refer the reader to \cite{CMP-Guo,SIMA-Germain-Masmoudi-Pausader,IMRN-Ionescu-Pausader,FM-Jang-Li-Zhang,JEMS-Li-Wu,ARMA-Guo-Han-Zhang, CMP-FZ} for both significant contributions and relevant introductions to the extensive literature. Let us stress that such steady solutions are at best marginally spectrally stable so that the above references rely on dispersive estimates and normal forms, thus they are relatively sensitive to the choice of the pressure law, especially the value  $(\f{\epsilon}{\rho}\p_{\rho}P)(\rho_i).$

In contrast, other equilibria or relative equilibria --- such as traveling waves --- of the electronic Euler-Poisson system, even those of small amplitude, seem to have received no attention at all. The main reason is surely that none of the traveling waves whose existence requires the co-existence of two distinct equilibria at the same phase speed may exist for this system. Among plane waves this excludes both solitary\footnote{The reduced traveling wave profile ODE is planar so that the existence of a homoclinic loop implies the existence of another constant state.} waves and kinks. Incidentally we point out that the situation is already significantly better for the two-species or ionic versions of the system; see for instance \cite{Degond-et-al,PhysD-Haragus-Scheel,JMFM-Haragus-Scheel,JDE-Bae-Kwon,ARMA-Bae-Kwon}.

In the present work, we consider plane periodic traveling waves, that is, solutions to \eqref{ep3d} of the form
\[
(t,{\bf x})\mapsto (\underline{\rho},\underline{{\bf u}})\left({\bf k}\cdot\left({\bf x}-t\,{\bf V}\right)\right)
\]
with wave vector ${\bf k}\in\mathbb{R}^3$, wave speed ${\bf V}\in\mathbb{R}^3$ and $1$-periodic profile $(\underline{\rho},\underline{{\bf u}})$ depending on a one-dimensional variable. Note that for profiles that are non characteristic in the weak sense that the set where $\underline{\rho}\,(\underline{{\bf u}}-\underline{{\bf V}})$ vanishes has empty interior, an elementary computation shows that the component of $\underline{{\bf u}}$ orthogonal to ${\bf k}$ is constant. Therefore one may use Galilean invariance to reduce to the case when $\underline{{\bf u}}$ and ${\bf k}$ are co-linear. A suitable nondimensionalization may also bring the reduction $\epsilon=1$, $\rho_i=1$. Hereafter we shall make all these notational simplifications.

Since we are using it a few times, let us make explicit that here the Galilean invariance is the fact that if $(t,{\bf x})\mapsto (\rho,{\bf u})(t,{\bf x})$ solves \eqref{ep3d} then so does $(t,{\bf x})\mapsto (\rho,{\bf u}+{\bf a})(t,{\bf x}-t\,{\bf a})$, for any fixed ${\bf a}\in\mathbb{R}^3$.

The upshot of our main achievement is that essentially any plane periodic traveling wave of sufficiently small amplitude is spectrally unstable under localized perturbations. To achieve this goal, by a well-known combination of elementary Fourier and spectral arguments, it is sufficient (but not necessary) to prove spectral instability under perturbations with the same planar symmetry as the background wave. We refer the reader to \cite{SMF-Audiard-Rodrigues} for a detailed version of the argument, specialized to waves of the nonlinear Schr\"odinger equations. From now on, we thus specialize to one-dimensional solutions, that is solutions that for some fixed unitary vector ${\bf e}$, depend only on $(t,{\bf e}\cdot {\bf x})$ and whose velocity field points everywhere in the direction of ${\bf e}$. Then, denoting $x:={\bf e}\cdot {\bf x}$ and $u={\bf e}\cdot {\bf u}$, System~\eqref{ep3d} becomes
\begin{equation}\label{ep1d}
\left\{
\begin{array}{rl}
\displaystyle
\partial_t \rho+\p_x(\rho\,u)&=0\,,\vspace{2mm}\\
\displaystyle
\partial_t(\rho\,u)+\p_x(\rho\,u^2)+\p_x P(\rho)&=\rho\p_x\phi\,,\vspace{2mm}\\
\displaystyle
\p_x^2\phi&=\rho-1\,.
\end{array}
\right.
\end{equation}
Obviously one-dimensional solutions are curl-free and, consistently, System~\eqref{ep1d} inherits a one-dimensional version of the Hamiltonian formulation given hereabove.

There is a further reduction that we want to perform. The goal of this step is two-fold: to obtain another Hamiltonian formulation where the skew-adjoint operator $\mathcal{J}$ is replaced with an invertible one and to get rid of the Poisson equation. With this in mind, let us first observe that if
\[
(t,x)\mapsto (\underline{\rho},\underline{u})\left(k\left(x-t\,V\right)\right)
\]
defines a traveling-wave solution to \eqref{ep1d} with wave number $k\in\mathbb{R}$, wave speed $V\in\mathbb{R}$ and $1$-periodic profile $(\underline{\rho},\uu)$, then the relative discharge rate $\underline{\rho}(\uu-V)$ is constant, thus so is $\uu+(\uu-V)\,\p_x^2\underline{\phi}$. By a further Galilean transformation one may force $\uu+(\uu-V)\,\p_x^2\underline{\phi}\equiv 0$ and we do so henceforth. Now, let us observe that more generally the first equation of System~\eqref{ep1d} implies that $\p_t\p_x\phi+(1+\p_x^2\phi)u$ is constant with respect to $x$, so that by restricting to localized perturbations of such a background wave, one derives $\p_t\p_x\phi+(1+\p_x^2\phi)u=0$. As a result for the class of solutions we are interested in it is sufficient to set\footnote{Note that $E$ is the opposite of the electric field.} $E=\p_x\phi$ and replace \eqref{ep1d} with
\beq\label{eep-int}
\left\{
\begin{array}{l}
\displaystyle
\pt E+(1+\p_x E)u=0\,,\vspace{2mm}\\
 \displaystyle
  \pt u+u \p_x u+\p_x(F'(1+\p_x E))= E\,.
  \end{array}
  \right.
\eeq
It is even more straightforward to check that solutions to \eqref{eep-int} do yield solutions to \eqref{ep1d}.

Therefore one may restrict the analysis to the study of System~\eqref{eep-int}. The latter also admits a Hamiltonian formulation,
\[
\p_t\begin{pmatrix}E\\u\end{pmatrix}
\,=\,J\delta_{(E,u)}\mathcal{H}[E,u]
\]
with skew-adjoint operator $J$ and Hamiltonian density $\mathcal{H}$ given by
\begin{align*}
J\,&:=\,\begin{pmatrix}0&-1\\1&0\end{pmatrix}\,,&
\mathcal{H}[E,u]
&:=\frac12(1+\p_x E)\,u^2+F(1+\p_x E)+\frac12\,E^2\,.
\end{align*}
Let us observe that for System~\eqref{eep-int} the only constant solution is $(E,u)\equiv(0,0)$. We have indeed lost the freedom to fix the velocity field to an arbitrary constant in normalizations leading to System~\eqref{eep-int} and based on Galilean invariance. Moreover if
\[
(t,x)\mapsto (\ue,\uu)\left(k\left(x-t\,V\right)\right)
\]
defines a traveling-wave solution to \eqref{eep-int} with $1$-periodic profile $(\ue,\uu)$, then for some constant $\mu$
\begin{equation}\label{profile-intro}
 \frac12 \ue^2+ \mathcal{W}(1+k\,\ue';V) \equiv \mu
\end{equation}
where
\begin{align*}
\p_\rho\mathcal{W}(\rho;V)&:=(\rho-1)\,h(\rho;V)\,,&
h(\rho;V)&:=\frac{V^2}{\rho^3}-F''(\rho)=\frac{V^2-P'(\rho)\,\rho^2}{\rho^3}\,.
\end{align*}
Since $\p_\rho^2\mathcal{W}(1;V)=h(1;V)=V^2-P'(1)$, a family of small amplitude periodic waves with velocity $V$ may exists only if $V^2\geq P'(1)$ and, as we prove later, the supersonic condition $V^2>P'(1)$ is sufficient to guarantee the existence of such a family. Besides, in such cases, normalizing with $\mathcal{W}(1;V)\equiv 0$ ensures that the small-limit limit is equivalently described by sending $\mu$ to zero from above.

Before stating our main result, we point out that since when $(t,x)\mapsto (E,u)(t,x)$ solves \eqref{eep-int} so does $(t,x)\mapsto (-E,-u)(t,-x)$, it is sufficient to consider cases when $V\geq0$.

\begin{thm}\label{main}
Assume that $P$ is smooth in a neighborhood of $1$ with $P'(1)>0$. There exist smooth functions $\delta_0:(\sqrt{P'(1)},+\infty)\to \mathbb{R}_+^*$, $\Gamma:(\sqrt{P'(1)},+\infty)\to \mathbb{R}$ and $\delta_1:\Gamma^{-1}(\mathbb{R}^*)\to \mathbb{R}_+^*$ $\omega_1:\Gamma^{-1}(\mathbb{R}^*)\to \mathbb{R}_+^*$ such that on
\[
\Omega:=\left\{\,(\delta,V)\,;\,V>\sqrt{P'(1)}\,,\,0\leq \delta<\delta_0(V)\,\,\right\}
\]
there exists smooth functions
\begin{align*}
\Omega&\to \mathbb{R}_+^*\times \mathcal{C}^\infty(\mathbb{R}/\mathbb{Z})^2\,,&
(\delta,V)\mapsto (k^{(\delta,V)},\ue^{(\delta,V)}(\cdot),\uu^{(\delta,V)}(\cdot))
\end{align*}
satisfying the following.
\begin{enumerate}
\item For any $V>\sqrt{P'(1)}$, $(\ue^{(0,V)}(\cdot),\uu^{(0,V)}(\cdot))\equiv (0,0)$.
\item For any $(\delta,V)\in\Omega$, $(k^{(\delta,V)},\ue^{(\delta,V)},\uu^{(\delta,V)})$ defines a periodic traveling wave to \eqref{eep-int} with speed $V$, solving \eqref{profile-intro} with $\mu=h(1;V)\,\delta^2/2$.
\item For any $V>\sqrt{P'(1)}$ such that $\Gamma(V)\neq0$ and $0<\delta<\delta_1(V)$, the corresponding traveling wave is spectrally unstable to localized perturbations with growth rate at least $\omega_1(V)\,\delta^3$.
\end{enumerate}
Furthermore, if $P$ is analytic in a neighborhood of $1$, then $\Gamma$ is also analytic so that $\Gamma^{-1}(\{0\})$ is either discrete or equal to $(\sqrt{P'(1)},+\infty)$.
\end{thm}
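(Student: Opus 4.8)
The plan is to treat the three items in turn --- construction of the small-amplitude family, Bloch--Floquet reduction of the spectral problem together with the identification in the harmonic limit of the collision responsible for the instability, and a perturbative expansion in $\delta$ --- and then to obtain the analyticity dichotomy by tracking analytic dependence throughout. For the existence part (items (1)--(2)), writing $\rho=1+k\,\ue'$ in \eqref{profile-intro} exhibits the profile equation as the level set $\tfrac12\ue^2+\mathcal W(\rho;V)=\mu$ of a one-degree-of-freedom Hamiltonian whose potential $\mathcal W(\cdot\,;V)$ has, under $V^2>P'(1)$, a nondegenerate minimum at $\rho=1$ since $\partial_\rho^2\mathcal W(1;V)=h(1;V)=V^2-P'(1)>0$. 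A Lyapunov-center / Poincar\'e--Lindstedt analysis then produces, for each such $V$, a one-parameter family of periodic orbits collapsing onto the center as $\mu\to0^+$, with minimal period tending to $2\pi/\sqrt{h(1;V)}$; imposing unit period fixes the wave number as a smooth function $k^{(\delta,V)}$ with $k^{(0,V)}=(2\pi\sqrt{h(1;V)})^{-1}$, and with the normalisations $\mathcal W(1;V)\equiv0$, $\mu=h(1;V)\,\delta^2/2$ one obtains, after fixing the translation, the Stokes expansion $\ue^{(\delta,V)}=\delta\cos(2\pi\,\cdot)+O(\delta^2)$, with $\uu^{(\delta,V)}$ determined algebraically by $\uu=V(1-1/\rho)$. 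Smooth dependence on $(\delta,V)$, and analytic dependence when $P$ is analytic, come from the (analytic) implicit function theorem applied to the period map.

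Linearising \eqref{eep-int} about $(\ue^{(\delta,V)},\uu^{(\delta,V)})$ in the comoving frame gives a Hamiltonian operator $J\mathcal L^{(\delta,V)}$ with $\mathcal L^{(\delta,V)}$ selfadjoint and $1/k^{(\delta,V)}$-periodic, so by Bloch--Floquet theory its $L^2(\mathbb R)$-spectrum is the union over the Floquet exponent $\theta$ of the discrete spectra of the fibred operators $L_\theta^{(\delta,V)}=J\mathcal L_\theta^{(\delta,V)}$, and spectral instability to localised perturbations amounts to some $L_\theta^{(\delta,V)}$ having an eigenvalue of positive real part. At $\delta=0$ the fibres are constant-coefficient, with eigenvalues $i\bigl(V\xi+s\,\omega_0(\xi)\bigr)$, $\xi=\theta+2\pi n k^{(0,V)}$, $n\in\mathbb Z$, $s=\pm1$, $\omega_0(\xi):=\sqrt{P'(1)\xi^2+1}$ --- purely imaginary, consistent with marginal stability of the constant state --- and evaluating the quadratic form of $\mathcal L_\theta^{(0,V)}$ on the corresponding eigenvectors gives the Krein signature $\operatorname{sgn}\bigl(\omega_0(\xi)+sV\xi\bigr)$, positive for $|\xi|<(2\pi k^{(0,V)})^{-1}$ and mixed beyond. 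Since $\xi\mapsto V\xi\pm\omega_0(\xi)$ are both strictly increasing, every nontrivial collision pairs the $s=+1$ branch at some $\xi_1$ with the $s=-1$ branch at $\xi_2=\xi_1+jK$, $K:=2\pi k^{(0,V)}=(V^2-P'(1))^{-1/2}$, $j\ge1$, under $jVK=\omega_0(\xi_1)+\omega_0(\xi_2)$; convexity of $\omega_0$ shows this is solvable iff $jVK\ge 2\,\omega_0(jK/2)$, which reduces to $j\ge2$. The value $j=1$ is thus excluded; $j=2$ yields only the boundary case $\xi_1=-K$, $\xi_2=K$, $\theta=0$, $\lambda=0$ --- the (marginally stable) modulational point, with vanishing signature --- whereas $j=3$ admits, for \emph{every} $V>\sqrt{P'(1)}$, a resonance at some $\theta_\star^{(0)}(V)\neq0$ with nonzero common eigenvalue $i\omega_\star^{(0)}(V)$, and one checks (using the same convexity estimate at gap $2K$) that the solution satisfies $\xi_1>-K$ while $\xi_2>2K>K$, so that the two colliding modes carry \emph{opposite} Krein signatures; generically --- away from a discrete set of speeds --- the collision is simple and transversal, i.e.\ the two group velocities differ.

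Near $(\delta,\theta)=(0,\theta_\star^{(0)}(V))$ I would run a Lyapunov--Schmidt / Riesz-projection reduction onto the two-dimensional colliding eigenspace, producing a $2\times2$ matrix $M(\delta,\theta,V)$ carrying the relevant spectrum, and expand it in $\delta$. Because the $O(\delta)$ part of the profile is a pure first harmonic, it couples Fourier modes one step apart, so reaching the three-step gap costs three powers of $\delta$: a direct contribution from the $O(\delta^3)$ third harmonic of the profile, plus length-two and length-three chains of lower-order couplings mediated by the resolvent of $L_{\theta_\star^{(0)}}^{(0,V)}$ on the complement of the resonant modes. Each diagonal entry, being a simple imaginary eigenvalue of a Hamiltonian operator, stays on $i\mathbb R$, receives no $O(\delta)$ correction (again by the harmonic structure) and only an $O(\delta^2)$ one; transversality lets one tune $\theta=\theta_\star(\delta,V)=\theta_\star^{(0)}(V)+O(\delta^2)$ so that the two diagonal entries coincide, absorbing these $O(\delta^2)$ drifts, after which $M$ takes the form $i\,\omega_\star(\delta,V)\,\mathrm{Id}+\left(\begin{smallmatrix}0&\beta_-\\ \beta_+&0\end{smallmatrix}\right)+o(\delta^3)$ with $\beta_\pm=O(\delta^3)$. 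The Hamiltonian symmetry $\sigma(L_\theta)=-\overline{\sigma(L_\theta)}$ forces $\beta_-\beta_+$ to be real with sign dictated by the two Krein signatures, and since these are opposite one gets $\beta_-\beta_+\ge0$; writing $\beta_-\beta_+=\Gamma(V)\,\delta^6+o(\delta^6)$, whenever $\Gamma(V)\neq0$ the reduced matrix has an eigenvalue $i\,\omega_\star(\delta,V)+\sqrt{\Gamma(V)}\,\delta^3+o(\delta^3)$, which yields spectral instability with growth rate at least $\omega_1(V)\,\delta^3$, $\omega_1(V):=\tfrac12\sqrt{\Gamma(V)}$, for $0<\delta<\delta_1(V)$, $\delta_1(V)$ being the radius of validity of the reduction. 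The main obstacle is precisely this third-order computation of the coupling, hence of $\Gamma(V)$: it requires solving two successive inhomogeneous linear equations against the (explicit, since constant-coefficient) resolvent of $L_{\theta_\star^{(0)}}^{(0,V)}$ and then checking that the resulting scalar does not vanish identically.

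Finally, if $P$ is analytic near $1$ then the profile family depends analytically on $(\delta,V)$ by the analytic implicit function theorem, the fibred operators $L_\theta^{(\delta,V)}$ depend analytically on their parameters, and Kato's analytic perturbation theory renders the Riesz projection, the reduced $2\times2$ block, and hence $\Gamma$, analytic functions of $V$ on $(\sqrt{P'(1)},+\infty)$. An analytic function on this connected interval either vanishes identically or has isolated zeros, so $\Gamma^{-1}(\{0\})$ is either the whole interval or discrete, which is the asserted dichotomy.
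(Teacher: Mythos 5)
Your proposal follows essentially the same route as the paper: a Lyapunov-center/period-map construction of the family, Bloch--Floquet reduction, classification of the collisions at $\delta=0$ (with $\ell=3$, i.e.\ a three-step Fourier gap, as the first non-modulational one), a $2\times2$ Riesz-projection reduction whose off-diagonal coupling is $O(\delta^3)$ because the $\delta^m$-coefficient of the profile only contains harmonics up to order $m$, the sign argument from opposite Krein signatures, and analytic perturbation theory for the dichotomy on $\Gamma^{-1}(\{0\})$. The decomposition of the cubic coupling into a direct third-harmonic term plus length-two and length-three resolvent chains is exactly the structure of the paper's index $\mathcal N$.

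One step is left genuinely open, and it matters for the statement as given. You invoke transversality of the collision --- the two group velocities differing at the crossing, which is what lets you tune $\theta_\star(\delta,V)$ so that the diagonal entries coincide and the $\delta^3$ coupling is converted into a real eigenvalue shift --- but you only assert it ``generically, away from a discrete set of speeds.'' As written this proves the instability only for $V$ outside an additional unidentified exceptional set, which is weaker than item (3) of the theorem (instability for \emph{every} $V$ with $\Gamma(V)\neq0$). The condition must, and can, be verified for all supersonic $V$: in the paper's normalization the relevant quantity is $\partial_\xi(\omega_{j'}+\omega_{j})(\xi_0)$, and its positivity follows from the strict convexity and evenness of $\zeta\mapsto\sqrt{1+P'(1)\zeta^2}$ together with $|j'+\xi_0/(2\pi)|<j+\xi_0/(2\pi)$ (which holds since $j'=j-3$ and $j+\xi_0/(2\pi)>3/2$) --- i.e.\ the very same location estimate you already used to get opposite Krein signatures. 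Notably, this argument \emph{fails} at the $\ell=2$ crossing at the origin, where the two group velocities do coincide; that is precisely why the modulational point needs the separate treatment it gets in the paper, and why your transversality claim cannot be dismissed as automatic. A second, more minor point: the assertion that the diagonal entries of the reduced matrix remain purely imaginary does not follow from their ``being eigenvalues'' (they are not, once the coupling is on); it follows from choosing the dual basis as $\pm iJ^{-1}$ applied to the direct basis, so that these entries are values of the real quadratic form of the self-adjoint operator $A_\xi^\delta$. With the correctly normalized bases this is exactly how the paper proceeds, so the fix is routine.
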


In the foregoing statement, $\delta$ plays the role of a small-amplitude parameter. The use of $\delta$ instead of $\mu$ is motivated by the fact that then one may enforce
\begin{align*}
k^{(\delta,V)}\,(\ue^{(\delta,V)})'&\stackrel{\delta\to0}{=}\delta\,\cos(2\pi\,\cdot)+\mathcal{O}(\delta^2)\,,&
\uu^{(\delta,V)}&\stackrel{\delta\to0}{=}\,V\,\delta\,\cos(2\pi\,\cdot)+\mathcal{O}(\delta^2)\,,&
\end{align*}

The function $\Gamma$ is an instability index whose non vanishing ensures that corresponding small-amplitude waves are unstable. We provide a relatively explicit --- but quite awful --- formula for $\Gamma$ in Proposition~\ref{prop-rmc}. The vanishing of $\Gamma$ should be thought as rare and we indeed prove in Proposition~\ref{th5} that in the case $P(\rho):=T\,\rho^\gamma$, with $T>0$, $\gamma\geq 1$, the index $\Gamma$ vanishes at most a finite number of times.

Moreover, $\Gamma$ detects only the strongest kind of instabilities due to one-dimensional perturbations. However, as we sketch below, there is an infinite number of similar indices tracking possibly weaker one-dimensional instabilities and whose vanishing is expected to be somehow independent of the one of $\Gamma$. Thus we believe that all small-amplitude waves are unstable.

\subsection{Periodic waves of Hamiltonian systems}

We believe that the present case study also provides significant insights for the stability analysis of periodic waves of Hamiltonian systems taken in a broad sense and we take some time now to place it within this perspective. When doing so, we allow ourselves to use standard technical terminology of the field whose definition is explicitly recalled later in the text. With this in mind, we refer the reader to \cite{KapitulaPromislow_book} for general background on nonlinear wave dynamics, to \cite{Angulo-Pava,PhysD-Haragus-Kapitula,dBRN} for material more specific to Hamiltonian systems and to \cite{R,R_Roscoff} for some perspectives on periodic traveling waves.

Let us first recall that, as of now, there isn't in the literature even a single instance of proof of nonlinear stability under localized perturbations of a periodic traveling wave of a Hamiltonian system. This is in strong contrast with on one side the situation for dissipative systems, in particular parabolic ones, and on the other side the situation for periodic perturbations of the same period. We refer the reader to \cite{ARMA1-JNRZ,Inventiones-JNRZ} on the former. As for the latter, the starting point is that with periodic boundary conditions one may integrate conservation laws accompanying the system and use a Lyapunov-type argument to obtain bounded orbital stability. The precise analysis may be thought as an extension of the classical Grillakis-Shatah-Strauss theory. On the latter we refer the reader to \cite{Angulo-Pava,dBGRN,dBRN,Nonlinearity-BMR}. Likewise, to our knowledge there is only one single contribution \cite{JFA-R} examining linear stability under localized perturbations, the latter being the only piece of work discussed here analyzing dispersive properties of the dynamics near periodic waves. At the nonlinear level, for localized perturbations, it is worth mentioning that some results converting spectral instability into nonlinear instability do exist; see \cite{ARMA-JLL}. Let us however warn the reader that the instability result proved there is of orbital type, and not of space-modulated type, which is the notion of stability proved to be sharp for parabolic systems in \cite{Inventiones-JNRZ}; for the sake of comparison, see \cite{DR2} for examples of proofs of nonlinear space-modulated instabilities (for some hyperbolic equations).

In turn, spectral studies are well-developed and we provide here only a brief overview of the literature. To begin with, we point out that results proving spectral stability for localized perturbation of waves of arbitrary size are quite rare. Most of them have been obtained for systems that are completely integrable by inverse scattering through a Lax pair representation. We refer the reader to \cite{Upsal-Deconinck} for a quite systematic exposition of the argument and to references therein for a large set of applications. A notable exception, using arguments that are arguably less systematic and more special\footnote{In the same sense as the study of scalar reaction-diffusion equations through Sturm-Liouville theory.} to the systems at hand, is the remarkable stability classification of waves of nonlinear Klein-Gordon equations \cite{JDE-JMMP}.

Concerning spectral instabilities of waves of arbitrary size, there are essentially two kinds of results available. On one hand, it is possible a count modulo two of positive eigenvalues associated with perturbations of the same period. The corresponding indices may be obtained either by a Krein signature count \cite{BJK} or by an Evans function computation \cite{Nonlinearity-BMR}. Note that the latter point of view has the advantage of being quite robust and has been indeed applied previously to many other kinds of waves and systems. For a quite large class of Hamiltonian systems, the corresponding index has been computed in both small-amplitude and large-period regimes \cite{IUMJ-BMR}. On the other hand, one may analyze possible slow side-band instabilities, that is, those corresponding to spectrum near the origin that are almost co-periodic. This relies on the understanding of the structure of nearby traveling waves and may be carried out either by direct Floquet-Bloch expansions as described in \cite{JNS-BNR,BHJ} or through Evans functions computations as in \cite{SMF-Audiard-Rodrigues}. One of the points of the latter is that it may be connected to formal geometrical optics and modulation theory. This link is extremely robust and has also been proved for parabolic equations in \cite{Serre,Noble-Rodrigues}, lattice dynamical systems in \cite{KR}, and discontinuous waves of some hyperbolic systems in \cite{JNRYZ}. For a quite large class of Hamiltonian systems, the corresponding slow modulation criterion has been elucidated in both small-amplitude and large-period regimes \cite{Nonlinearity-BMR2}. It is worth pointing out that instabilities due to the non trivial part of the small-amplitude slow modulation criterion are often designated as Benjamin--Feir instabilities, in reference to  its formal derivation for Stokes waves of free-surface water motion, and that they have been the object of many direct studies. See, in particular, \cite{Bridges-Mielke,Inventiones-BMV} for rigorous proofs of the original Benjamin--Feir instability.

We now specialize the discussion to spectral stability/instability of small-amplitude periodic waves. Beyond the results already mentioned, small-amplitude spectral stability results that have been obtained so far \cite{JMFM-HLS,JDE-Gallay-Haragus,SIMA-Johnson} rely on a two-tier argument. On one hand one may rule out slow modulational instabilities by a variation on the arguments mentioned above. On the other hand in cases considered there one may prove that the latter was the only possible instability. This follows by the quite robust argument that instability may only arise from the collision of eigenvalues with opposite Krein signatures; see \cite{McKay,KKS,SIAM-KM}.

The situation for small-amplitude waves of the electronic Euler--Poisson system is dramatically different in the sense that at zero amplitude there are not a single multiple eigenvalue but infinitely many double eigenvalues with opposite signatures, including the one corresponding to slow modulational perturbations. In this sense it is similar to the one occurring for Stokes waves and indeed, to our knowledge, the only one other contribution aiming at the fully rigorously analysis of such a situation is devoted to Stokes waves ; see the recent 
work \cite{Hur-Yang}. Combining this new non-modulational analysis with classical Benjamin--Feir instability they prove 
that all small-amplitude Stokes waves are spectrally unstable. Let us stress that whereas we perform a direct Floquet-Bloch analysis, the authors of \cite{Hur-Yang} develop an Evans function approach,
which does not seem to provide easily information about expected growth rates. 
For a comparison of traditional advantages of the two kinds of approaches we refer to \cite[p.47]{R}.

From a technical point of view, our mathematical analysis is closer in spirit to the formal asymptotic analyses in \cite{SIADS-TDK,SIADS-CDT,JFM-CDT}. The latter differ from fully mathematical proofs only by the fact that they \emph{assume} the existence of smooth expansions for all quantities of interest, a highly non trivial fact when multiple eingenvalues are to be perturbed. We stress that indeed, already in the by now classical slow modulation theory, the main step of rigorous analyses is to prove that this smoothness holds despite the initial presence of Jordan blocks, a result that is highly non generic from the point of view of general linear algebra.

\subsection{General mechanism}

We believe that the mechanism yielding the spectral instability proved here is quite robust. Thus we outline in the introduction the main wheels of the machinery. The strategy of the actual proof is precisely to prove the claims sketched below and compute associated key coefficients.

To begin with, for the sake of readability, let us drop any mark of the dependence on $V$ in the present discussion and denote $L^\delta$ the operator with $1$-periodic coefficients obtained by linearizing \eqref{eep-int} about a traveling wave of parameter $(\delta,V)$, in suitably scaled co-moving coordinates. Since we are interested in localized perturbations, the operator is defined as acting on functions living in a $L^2(\mathbb{R})$-based space. Then, exactly as Fourier transform is used for constant-coefficient operators, one may use the Floquet-Bloch transform to decompose the action of $L^\delta$ as the action of multipliers $L^\delta_\xi$ by Floquet multiplier $\xi$. Each multiplier $L^\delta_\xi$ is a differential operator acting on functions living in a $L^2(\mathbb{R}/\mathbb{Z})$-based space and has compact resolvents hence discrete spectra consisting entirely of eigenvalues of finite multiplicity depending continuously on $\xi$. Moreover, as a consequence, the spectrum of $L^\delta$ is the union over $\xi$ of the spectra of all $L^\delta_\xi$.

A key element of the analysis is that $L^\delta$ inherits form the Hamiltonian structure the factorization $L^\delta\,=\,J\,A^\delta$, with $A^\delta$ a self-adjoint operator. This yields the Hamiltonian symmetry $(L^\delta)^*=-J^{-1}\,L^\delta\,J$, and correspondingly $(L_\xi^\delta)^*=-J\,L_\xi^\delta\,J^{-1}$. This readily implies the classical Hamiltonian properties that spectral stability is possible only if the spectrum lies on the imaginary axis and that only multiple eigenvalues may leave the imaginary axis when varying parameters.

Note that recasting Fourier analysis in terms of Bloch transform provides a complete description of the spectrum of $L^0_\xi$ for any $\xi$, and that eigenvalues are at most of multiplicity $2$. Let $\lambda_0\in i\,\mathbb{R}$ be a double eigenvalue of $L^0_{\xi_0}$ for some $\xi_0$. Then when $(\delta,\xi-\xi_0)$ is sufficiently small, the spectrum of $L^\delta_\xi$ near $\lambda_0$ coincides with the one of any two-by-two matrix
\[
D_{\xi}^{\delta}\,=\,\left(\inp{\tilde{q}_\ell^{\delta}(\xi,\cdot)}{L_{\xi}^{\delta}q_m^{\delta}(\xi,\cdot)}_{L_{per}^2}\right)_{1\leq \ell, m\leq 2}
\]
defined from $(q_1^{\delta}(\xi,\cdot),q_2^{\delta}(\xi,\cdot))$ a basis of the sum of characteristic spaces of $L_\xi^\delta$ associated with eigenvalues near $\lambda_0$, and $(\tilde{q}_1^{\delta}(\xi,\cdot),\tilde{q}_2^{\delta}(\xi,\cdot))$ a dual basis of the corresponding space of $(L_\xi^\delta)^*$ associated with eigenvalues near $\overline{\lambda_0}$. In the foregoing, $\inp{\cdot}{\cdot}_{L_{per}^2}$ denotes the canonical $L^2$ scalar product on $L^2(\mathbb{R}/\mathbb{Z})$, skew-linear in its first variable. The classical Kato perturbation theory allows to construct all the quantities involved hereabove smoothly and even to prescribe a choice when $\delta=0$ and to extend it while preserving real and Hamiltonian symmetries.

Now, at $\delta=0$, Fourier analysis and Hamiltonian symmetry provides bases  $(p_1^{0}(\xi,\cdot),p_2^{0}(\xi,\cdot))$ and $(\tilde{p}_1^{0}(\xi,\cdot),\tilde{p}_2^{0}(\xi,\cdot))$ for the direct and dual spaces, of the form $p_m^{0}(\xi,\cdot)=e^{2\pi\,i\,j_1\,\cdot}\,P_m(\xi)$, $m=1,2$ and $\tilde{p}_\ell^{0}(\xi,\cdot)\,=\,i\,J^{-1}{p}_\ell^{0}(\xi,\cdot)$, $\ell=1,2$, where $(j_1,j_2)$ are some fixed integers independent of $\xi$ and distinct. We warn the reader that at this stage the two bases are not in duality. However, on one hand, since $j_1\neq j_2$, $p_m^{0}(\xi,\cdot)$ and $\tilde{p}_\ell^{0}(\xi,\cdot)$ are already orthogonal when $\ell\neq m$, and, on the other hand, since\footnote{This is the reason why we have introduced an $i$ in the definition of $\tilde{p}_\ell$ from $p_\ell$.} $i\,J^{-1}$ is self-adjoint, $\inp{\tilde{p}_\ell^{0}(\xi,\cdot)}{p_\ell^{0}(\xi,\cdot)}_{L_{per}^2}$, $\ell=1,2$, are real. As a consequence, one may scale the foregoing bases into  $(q_1^{0}(\xi,\cdot),q_2^{0}(\xi,\cdot))$ and $(\tilde{q}_1^{0}(\xi,\cdot),\tilde{q}_2^{0}(\xi,\cdot))$  such that $\tilde{q}_\ell^{0}(\xi,\cdot)\,=\,\varepsilon_\ell\,i\,J^{-1}{q}_\ell^{0}(\xi,\cdot)$, $\ell=1,2$, where $\varepsilon_\ell\in\{-1,1\}$ is the sign of  $\inp{\tilde{p}_\ell^{0}(\xi,\cdot)}{p_\ell^{0}(\xi,\cdot)}_{L_{per}^2}$.

The latter property is preserved by the extension operator to $\delta>0$. This implies that when $\varepsilon_1\,\varepsilon_2>0$, the matrix $D_\xi^\delta$ is skew-adjoint, as a real multiple of
\[
i\,\left(\inp{q_\ell^{\delta}(\xi,\cdot)}{A_{\xi}^{\delta}q_m^{\delta}(\xi,\cdot)}_{L_{per}^2}\right)_{1\leq \ell, m\leq 2}\,.
\]
This recovers in concrete form the above claim about Krein signatures, that are signatures of $i\,J^{-1}$ restricted to characteristic spaces. Concrete computations for \eqref{eep-int} show however that this never happens so that the case to consider is really when $\varepsilon_1\,\varepsilon_2<0$. Then the above normalization puts $D_\xi^\delta$ in the form
\begin{align*}
D_\xi^\delta&=i\,\begin{pmatrix}\alpha_1(\xi,\delta)&\beta(\xi,\delta)\\[0.5em]
-\overline{\beta(\xi,\delta)}&\alpha_2(\xi,\delta)\end{pmatrix}\,,&
\alpha_m(\xi,\delta)\in\mathbb{R}\,,&\quad m=1,2\,.
\end{align*}
The corresponding eigenvalues are
\[
i\,\left(\frac{\alpha_1+\alpha_2}{2}\pm\sqrt{\frac{(\alpha_1-\alpha_2)^2}{4}-|\beta|^2}\right)
\]
(evaluated at $(\xi,\delta)$). Therefore the possible emergence of unstable spectrum near $(\lambda,\xi,\delta)=(\lambda_0,\xi_0,0)$ is effectively reduced to the fact that $(\alpha_1-\alpha_2)^2/4-|\beta|^2$ could take negative values.

Now, on one hand from orthogonality of trigonometric monomials and the fact that in the expansion of profiles as powers of $\delta$ the $\delta^m$-coefficient is a trigonometric polynomial of power less than $m$, one gets that
\[
\beta(\xi,\delta)\stackrel{(\xi,\delta)\to(\xi_0,0)}{=}
\Gamma\,\delta^{|j_1-j_2|}+\cO(\delta^{|j_1-j_2|}\,\|(\xi-\xi_0,\delta)\|)
\]
for some coefficient $\Gamma$. On the other hand one readily checks that if $(\alpha_1-\alpha_2)(\xi,0)$ splits linearly in $\xi-\xi_0$, by the Implicit Function Theorem, there exists a smooth function $\delta\mapsto\Xi(\delta)$ such that $\Xi(0)=\xi_0$ and $(\alpha_1-\alpha_2)(\Xi(\delta),\delta)=0$. When $\p_\xi(\alpha_1-\alpha_2)(\xi_0,0)\neq0$ and $\Gamma\neq 0$, one then concludes that for any $\delta>0$ sufficiently small one gets at $\xi=\Xi(\delta)$ a spectral instability of size at least $\delta^{|j_1-j_2|}$. With a little more work, one may fully describe the shape of the arising instability spectrum and prove that it forms an instability bubble asymptotically shaped as an ellipse. This is illustrated in Figure~\ref{fig:spec}.

\begin{figure}[h]
\begin{center}
 \begin{tabular}{cc}
\includegraphics[width=0.4\textwidth]{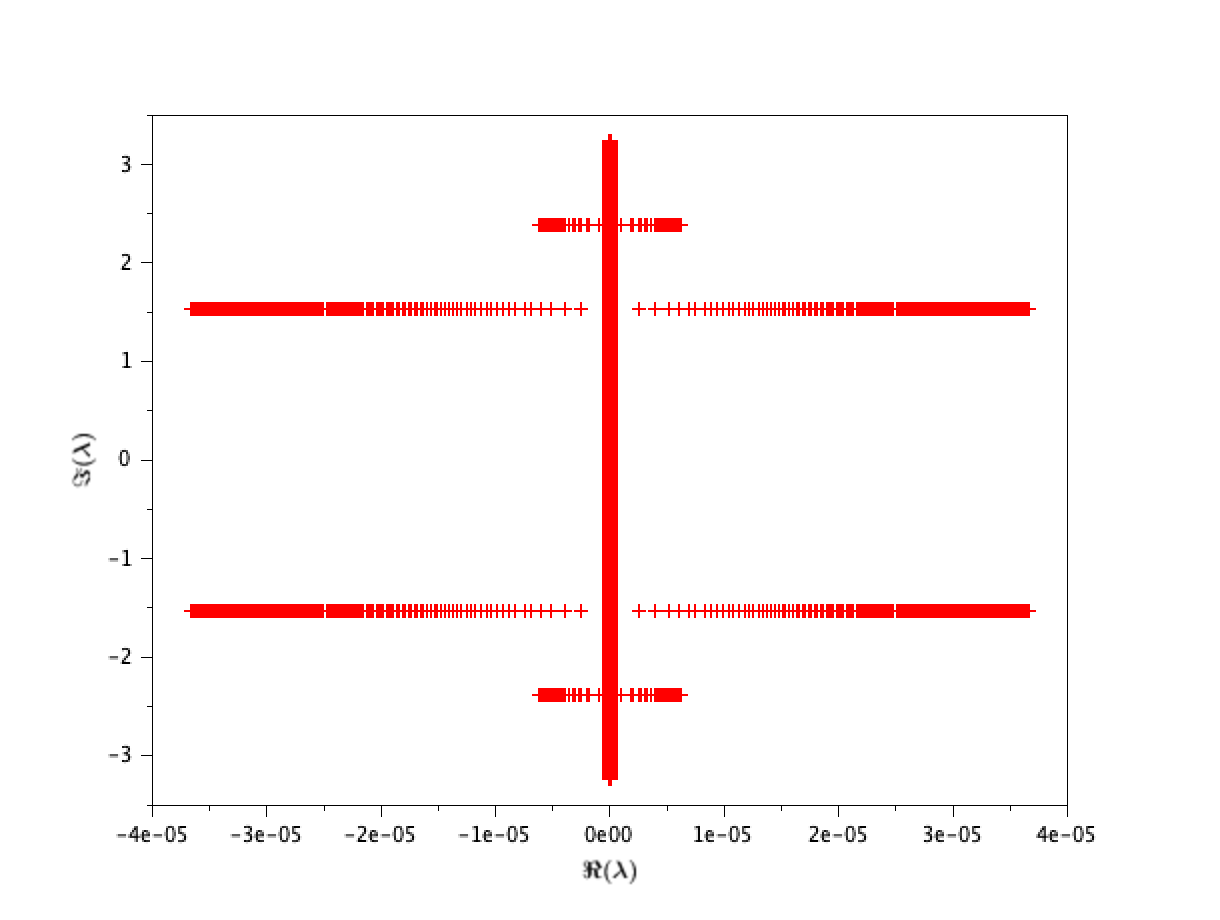} &    
\includegraphics[width=0.4\textwidth]{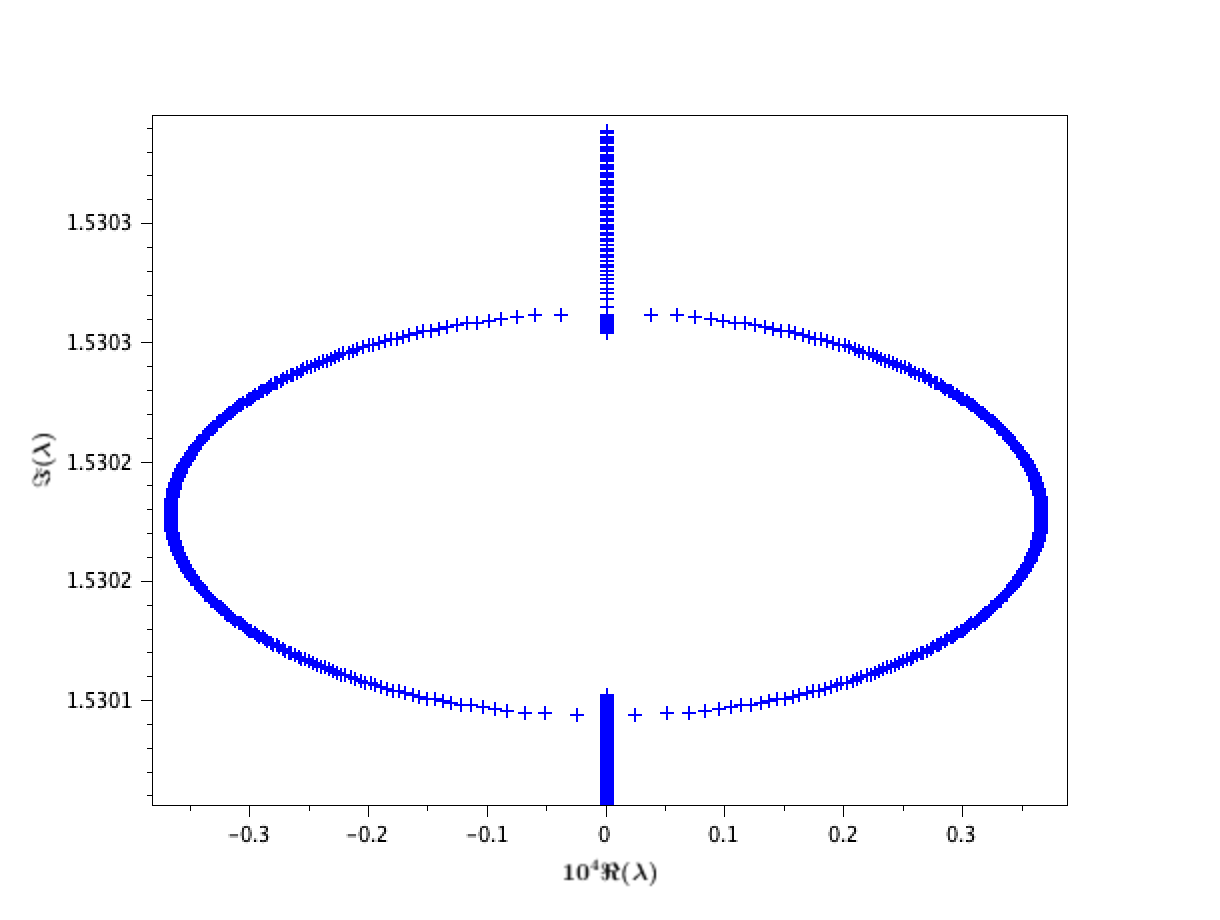} 
\\
(a)  & (b)  \\
\includegraphics[width=0.4\textwidth]{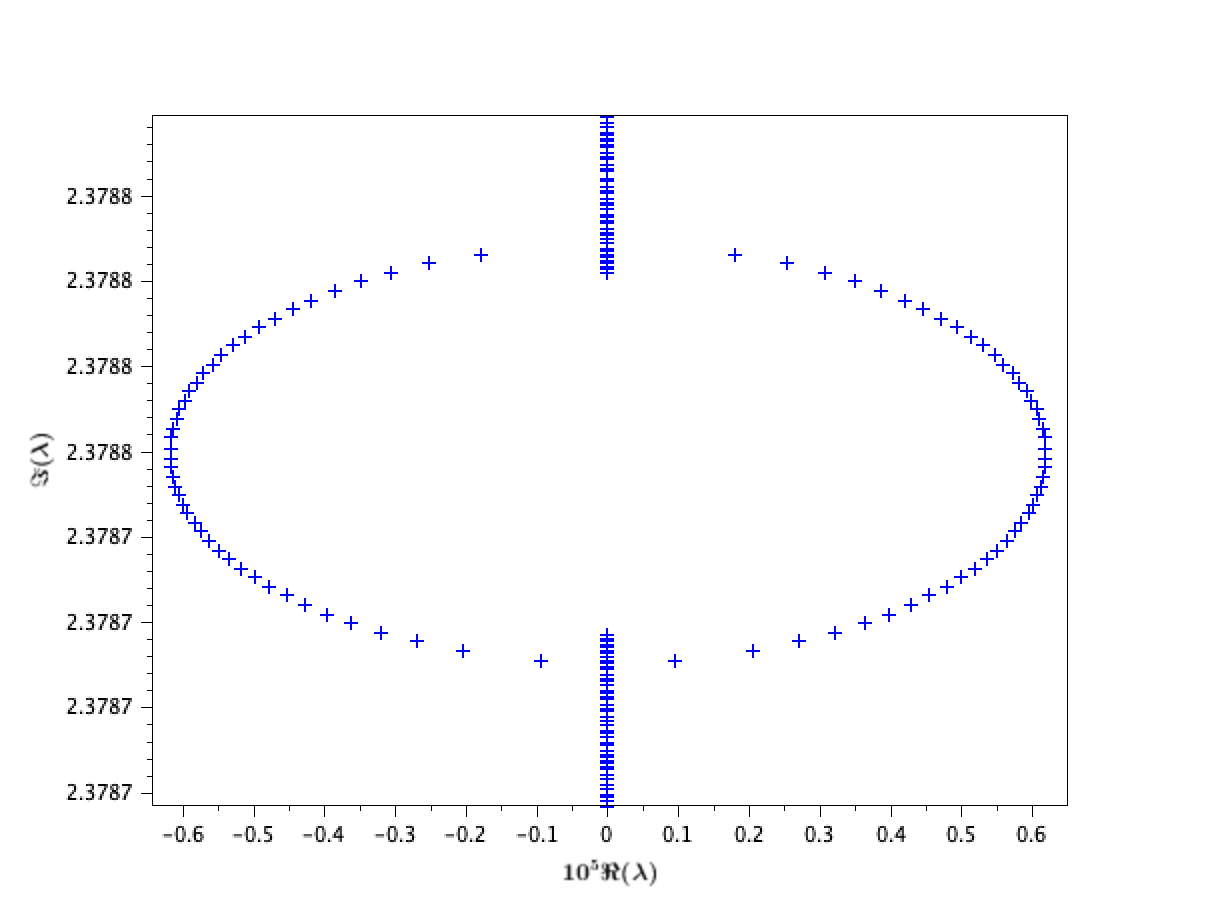} &    
\includegraphics[width=0.4\textwidth]{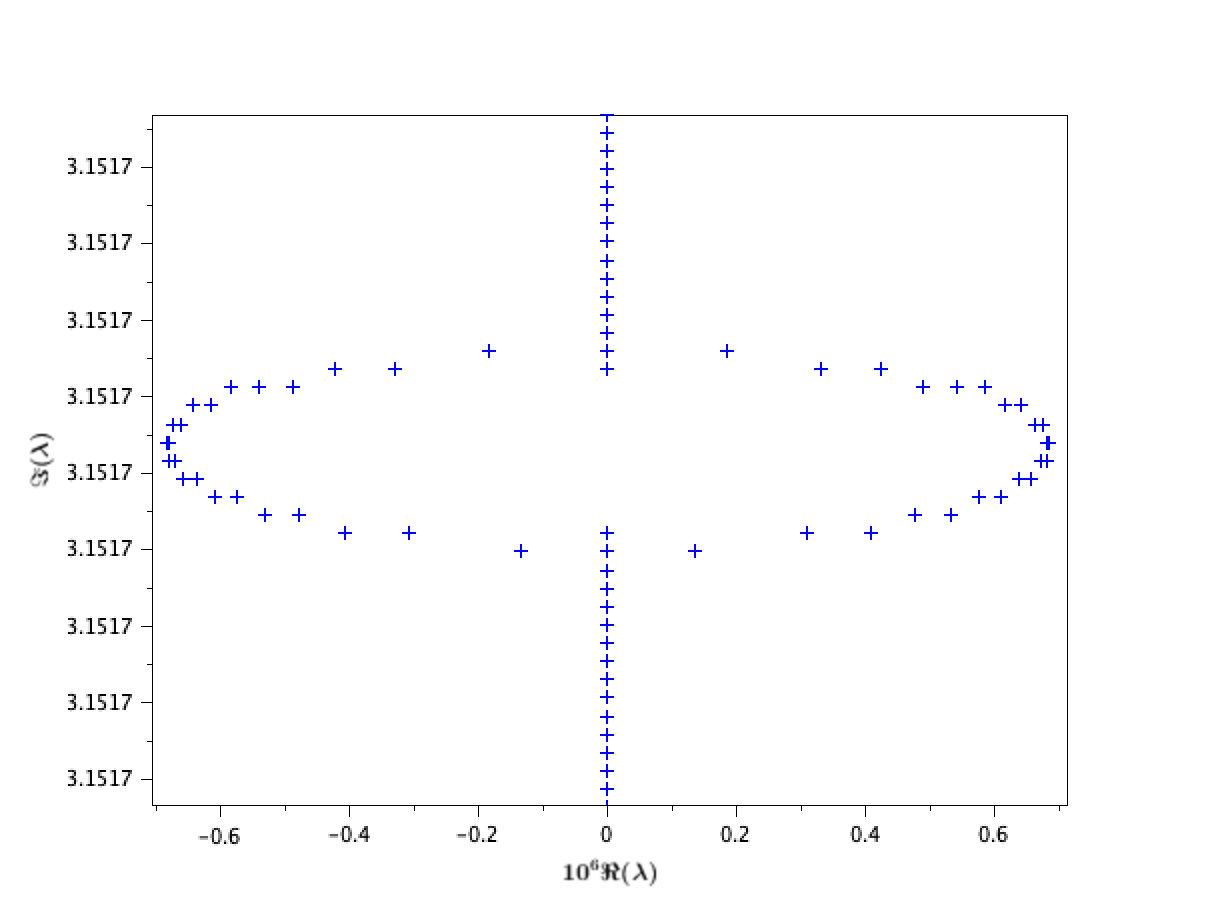} 
\\
(c)  & (d)  
\end{tabular}
\caption{\label{fig:spec}
{\bf An example of spectral instability.} The pressure law is $P(\rho)=\rho^2/4$. The velocity $V$ is such that the limiting wavenumber is $1$. The wave is the one of second smallest amplitude (red curve) in Figure~\ref{fig:profiles}. In (a), the full spectrum is displayed. In (b), (c), and (d), we zoom on the three first instability bubbles, corresponding respectively to $|j_1-j_2|=3$, $|j_1-j_2|=4$ and $|j_1-j_2|=5$. The bubble of (d) is too small to be seen on (a). The spectrum is computed following Hill's method, as detailed in \cite[Section~3.1]{R}.}
 \end{center}
\end{figure}

To complete the discussion there remains mainly to discuss what are possible values of $|j_1-j_2|$. For \eqref{eep-int}, the value of $|j_1-j_2|$ may be any integer larger or equal to $2$. The minimal case $|j_1-j_2|=2$ is reached at $(\lambda_0,\xi_0)=(0,0)$, whose perturbation corresponds to the slow modulational regime, but there $\p_\xi(\alpha_1-\alpha_2)(\xi_0,0)=0$ and another kind of discussion is required, whose conclusion for \eqref{eep-int} is that no instability may arise there, at least for power pressure laws. We stress that the latter vanishing is by no way accidental and is related to the fact that in the small-amplitude limit two characteristic velocities of the modulation system coincide; see the detailed discussion in \cite{Nonlinearity-BMR2}. From the consideration of the next possible value, $|j_1-j_2|=3$, stems Theorem~\ref{main}.

We stress that when all the pieces of the sketched scenario have been carefully justified one may still wish to obtain a concrete expression for the index $\Gamma$. We do so for Theorem~\ref{main} but we warn the reader that despite the fact that this is probably one of the simplest possible computations of its kind, it is still technically demanding. In even more challenging cases of \cite{Hur-Yang,JFM-CDT} similar computations have been carried out using symbolic computation softwares.

\medskip

In the next section, we prove the existence of small-amplitude periodic traveling-wave solutions to \eqref{eep-int} and provide corresponding asymptotic expansion. In Section~\ref{sec3} we provide some elements of background about spectral problem, including elements of Floquet-Bloch theory and constant-coefficient computations. Section~\ref{sec4} is devoted to ruling out slow modulational instabilities, and we show in Appendix~\ref{secW} that this is consistent with formal modulation theory. At last, Section~\ref{sec5} 
proves Theorem~\ref{main}.

\medskip

\noindent {\bf Acknowledgment:} L.M.R. would like to warmly thank Corentin Audiard for enlightening discussions about Krein signatures, during the preparation of \cite{SMF-Audiard-Rodrigues}. L.M.R. expresses his gratitude to INSA Toulouse and C.S. his to Universit\'e de Rennes 1 for their hospitality during part of the preparation of
the present contribution. 
P.N. and L.M.R. would like to thank the Isaac Newton Institute for Mathematical Sciences, Cambridge, for support and hospitality during the programme \emph{Dispersive Hydrodynamics}, where they've heard about \cite{Hur-Yang}. They also thank Casa Matem\'atica Oaxaca for its hospitality during the workshop \emph{Geometrical Methods, non Self-Adjoint Spectral Problems, and Stability of Periodic Structures}, organized by Ram\'on Plaza, where they have heard about an early version of the arguments used in \cite{SIADS-TDK,SIADS-CDT,JFM-CDT}.

\section{\label{sec2}Existence of small-amplitude periodic waves}

We begin by discussing the existence part of Theorem~\ref{main}. First, we eliminate $\uu^{(\delta,V)}$ through
\begin{equation}\label{uud}
\uu^{(\delta,V)}\,=\,V\,\frac{k^{(\delta,V)}(\ue^{(\delta,V)})'}{1+k^{(\delta,V)}(\ue^{(\delta,V)})'}
\,=\,V\left(1-\frac{1}{1+k^{(\delta,V)}(\ue^{(\delta,V)})'}\right).
\end{equation}
Then, by examining \eqref{profile-intro} near the maximal value of $\ue$ (in the small amplitude regime), one derives that $1$ must be a root of $\mathcal{W}(\cdot;V)$ of even order and that the associated first non-vanishing derivative must be positive. Correspondingly we compute $\p_\rho^2\mathcal{W}(1;V)=h(1;V)=V^2-P'(1)$ so as to derive that $V^2-P'(1)\geq0$ is necessary, and moreover we observe that, when $V^2=P'(1)$, $\p_\rho^3\mathcal{W}(1;V)=-P''(1)-2\,P'(1)$, the latter being nonzero for most common pressure laws, including all convex ones.

For this reason, we assume from now on $V>\sqrt{P'(1)}$. To analyze \eqref{profile-intro}, we point out that by applying a suitable version of the Implicit Function Theorem, as in \cite[Lemma~C.1]{IUMJ-BMR}, one derives that
\[
 \mathcal{W}(1+\zeta\,n;V)=\zeta^2\,\frac{h(1;V)}{2}
\]
is equivalent for $n$ sufficiently close to $1$, respectively $-1$, and $\zeta$  sufficiently small to
\[
n\,=\,\mathcal{W}^\dagger(\zeta;V)\,,
\qquad \textrm{resp. } n\,=\,-\mathcal{W}^\dagger(-\zeta;V)\,,
\]
for some smooth function $\mathcal{W}^\dagger$ taking the value $1$ at $(0,V)$. With this in hands, variations on classical computations of periods of nonlinear pendulums yield that necessarily
\begin{align*}
\frac{1}{k^{(\delta,V)}}
&\,=\,\sqrt{h(1;V)}\,\int_{-1}^{1}\left(\frac{1}{\mathcal{W}^\dagger(\delta\,\sqrt{1-\nu^2};V)}+\frac{1}{\mathcal{W}^\dagger(-\delta\,\sqrt{1-\nu^2};V)}\right)\frac{\d \nu}{\sqrt{1-\nu^2}}\\
&\,=\,\sqrt{h(1;V)}\,\int_{-\pi}^{\pi}
\frac{\d \theta}{\mathcal{W}^\dagger(\delta\,\cos(\theta);V)}\,.
\end{align*}
Note that the dependence of $k^{(\delta,V)}$ on $\delta$ is even.

With this choice of $k^{(\delta,V)}$, it is then sufficient to solve the ODE Cauchy problem
\begin{align*}
(\ue^{(\delta,V)})''&\,=\,-\frac{\ue^{(\delta,V)}}{(k^{(\delta,V)})^2\ h(1+k^{(\delta,V)}\,(\ue^{(\delta,V)})';V)}\,,\\
\ue^{(\delta,V)}(0)&\,=\,0\,,\qquad
(\ue^{(\delta,V)})'(0)\,=\,\delta\,\frac{\mathcal{W}^\dagger(\delta;V)}{k^{(\delta,V)}}\,.
\end{align*}
To motivate the normalization (imposed to quotient the freedom due to the invariance by spatial translations), let us point out that it follows from \eqref{profile-intro} that the maximum and minimum value of $\ue^{(\delta,V)}$ are necessarily opposite. With the present choice, $\ue^{(\delta,V)}$ is odd and $\uu^{(\delta,V)}$ is even.

At this stage, we may already justify one of the claims of the introduction about the structure of the expansion. Indeed one may check recursively that inserting
\[
\ue^{(\delta,V)}\stackrel{\delta\to0}{=}\sum_{j=1}^{M} \delta^j\ue_j^V+ \cO(\delta^{M+1})
\]
in the foregoing profile ODE yields an equation of the form
\[
(\ue_M^V)''+(2\pi)^2\,\ue_M^V\,=\,R_M^V
\]
where $R_M^V$ is a linear combination of $\sin(2\pi\,j\,\cdot)$, $|j|\leq M$, so that $\ue_M^V$ is given by a similar combination.

To complete this short preliminary section, we compute a few coefficients in asymptotic expansions when $\delta\to0$. To begin with, note that
\begin{align*}
k^{(\delta,V)}\,&\stackrel{\delta\to0}{=}\,k_0^V+\delta^2\,k_2^V\,+\cO(\delta^4)\,,&
k_0^V&\,=\,\frac{1}{2\pi\sqrt{h(1;V)}}\,,
\end{align*}
whereas
\begin{align}\label{ued1}
k_0^V\,\ue_1^{V}
&=\frac{\sin(2\pi\,\cdot)}{2\pi}\,.
\end{align}
Before going on we observe that
\[
\p_\rho^\ell\mathcal{W}(1;V)\,=\,(\ell-1)\ \p_\rho^{\ell-2}h(1;V)\,\qquad \ell\geq2\,,
\]
thus
\begin{align*}
\p_\zeta\mathcal{W}^\dagger(0;V)
&\,=\,-\frac16\frac{\p_\rho^3\mathcal{W}(1;V)}{\p_\rho^2\mathcal{W}(1;V)}
\,=\,-\frac13\frac{\p_\rho h(1;V)}{h(1;V)}\,,\\
\p_\zeta^2\mathcal{W}^\dagger(0;V)
&\,=\,5\,(\p_\zeta\mathcal{W}^\dagger(0;V))^2-\frac{1}{12}\frac{\p_\rho^4\mathcal{W}(1;V)}{\p_\rho^2\mathcal{W}(1;V)}
\,=\,\frac59\left(\frac{\p_\rho h(1;V)}{h(1;V)}\right)^2
-\frac14\frac{\p_\rho^2 h(1;V)}{h(1;V)}\,.
\end{align*}
Therefore,
\begin{align*}
\frac{k_2^V}{k_0^V}&\,=\,\frac12
\left(\frac{\p_\zeta^2\mathcal{W}^\dagger(0;V)}{2}
-(\p_\zeta\mathcal{W}^\dagger(0;V))^2
\right)
\,=\,\frac{1}{12}\left(\frac{\p_\rho h(1;V)}{h(1;V)}\right)^2
-\frac{1}{16}\frac{\p_\rho^2 h(1;V)}{h(1;V)}\,.
\end{align*}
By expanding the Cauchy problem for the profile ODE one also gets
\begin{align*}
k_0^V\,(\ue_2^{V})''+(2\pi)^2\,k_0^V\,\ue_2^{V}&\,=\,
(2\pi)\frac{\p_\rho h(1;V)}{h(1,V)}\,\frac{\sin(4\pi\,\cdot)}{2}\,,\\
\ue_2^{V}(0)&\,=\,0\,,\qquad
(\ue_2^{V})'(0)\,=\,\frac{\p_\rho\mathcal{W}^\dagger(0;V)}{k_0^V}
\,=\,-\frac{\p_\rho h(1;V)}{3\,k_0^V\,h(1;V)}\,,
\end{align*}
thus
\begin{align}\label{ued2}
k_0^V\,\ue_2^{V}
&=-\frac{1}{3}\frac{\p_\rho h(1;V)}{h(1,V)}\,\frac{\sin(4\pi\,\cdot)}{4\pi}\,.
\end{align}
At last,
\begin{align*}
k_0^V\,(\ue_3^{V})''&+(2\pi)^2\,k_0^V\,\ue_3^{V}\\
&\,\in\,
-(2\pi)\left(\frac12\left(\frac{\p_\rho h(1;V)}{h(1,V)}\right)^2-\frac18\frac{\p_\rho^2 h(1;V)}{h(1,V)}\right)\,\sin(6\pi\,\cdot)
+\Span\{\,\sin(4\pi\,\cdot)\,,\,\sin(2\pi\,\cdot)\,\}
\end{align*}
so that
\begin{align}\label{ued3}
k_0^V\,\ue_3^{V}
&\,\in\,
\frac{3}{16}\left(\left(\frac{\p_\rho h(1;V)}{h(1,V)}\right)^2-\frac14\frac{\p_\rho^2 h(1;V)}{h(1,V)}\right)\,\frac{\sin(6\pi\,\cdot)}{6\pi}
+\Span\{\,\sin(4\pi\,\cdot)\,,\,\sin(2\pi\,\cdot)\,\}\,.
\end{align}

To conclude we determine the sign of $k_2^V$ in the power law case.
\begin{lem}
For the pressure law $P=T (\cdot)^{\gamma}$, with $T>0$ and $\gamma\geq 1,$ we have
\beq\label{signa2}
k_2^V>0.
\eeq
\end{lem}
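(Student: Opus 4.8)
The plan is to reduce the claimed inequality \eqref{signa2} to an inequality involving only elementary functions of $\gamma$ and the nondimensionalized variables, and then to verify that inequality by direct, if somewhat tedious, estimation. The starting point is the formula already derived in the excerpt,
\[
\frac{k_2^V}{k_0^V}\,=\,\frac{1}{12}\left(\frac{\p_\rho h(1;V)}{h(1;V)}\right)^2-\frac{1}{16}\frac{\p_\rho^2 h(1;V)}{h(1;V)}\,,
\]
together with $h(\rho;V)=(V^2-P'(\rho)\,\rho^2)/\rho^3$. Since $k_0^V>0$, the sign of $k_2^V$ is the sign of the right-hand side. First I would compute $\p_\rho h(1;V)$ and $\p_\rho^2 h(1;V)$ explicitly for $P(\rho)=T\rho^\gamma$, so that $P'(\rho)\,\rho^2=T\gamma\,\rho^{\gamma+1}$; this gives $h(\rho;V)=V^2\rho^{-3}-T\gamma\,\rho^{\gamma-2}$, hence closed forms for $\p_\rho h(1;V)=-3V^2-T\gamma(\gamma-2)$ and $\p_\rho^2 h(1;V)=12V^2-T\gamma(\gamma-2)(\gamma-3)$, while $h(1;V)=V^2-T\gamma$. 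It is convenient to set $a:=T\gamma=P'(1)$, so that the supersonic condition is $V^2>a$, and to introduce $r:=a/V^2\in(0,1)$.

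Next, I would rewrite the bracketed quantity as a rational function of $r$ and $\gamma$ alone (the overall factor $V^2$ in numerator and denominator of each ratio cancels against itself appropriately; more precisely $\p_\rho h/h$ and $\p_\rho^2 h/h$ each scale so that, after multiplying through by $(h(1;V))^2=V^4(1-r)^2$, one obtains a polynomial in $r$ whose coefficients are polynomials in $\gamma$). Clearing the common positive denominator $192\,(h(1;V))^2>0$, inequality \eqref{signa2} becomes
\[
4\left(\p_\rho h(1;V)\right)^2-3\,h(1;V)\,\p_\rho^2 h(1;V)>0\,,
\]
i.e., substituting the expressions above,
\[
4\bigl(-3V^2-a(\gamma-2)\bigr)^2-3\bigl(V^2-a\bigr)\bigl(12V^2-a(\gamma-2)(\gamma-3)\bigr)>0\,.
\]
Expanding, the $V^4$ terms cancel ($36-36=0$), leaving a polynomial that is \emph{linear} in $V^2$ with positive leading coefficient in $V^2$, of the form $c_1(\gamma)\,V^2+c_0(\gamma)$ with $c_1(\gamma)=24(\gamma-2)a+3(\gamma-2)(\gamma-3)a = 3a(\gamma-2)(\gamma+5)$ and $c_0(\gamma)=4a^2(\gamma-2)^2$ (signs and exact coefficients to be double-checked in the final write-up). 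Since the expression is affine in $V^2$ and we only know $V^2>a=T\gamma$, it suffices to show it is $\ge 0$ at $V^2=a$ when the slope $c_1(\gamma)$ is nonnegative, and, when $c_1(\gamma)<0$ (which can happen only for $\gamma<2$), that it stays positive on the relevant range — but in fact substituting $V^2=a$ gives precisely $3a^2(\gamma-2)(\gamma+5)+4a^2(\gamma-2)^2=a^2(\gamma-2)(3\gamma+15+4\gamma-8)=a^2(\gamma-2)(7\gamma+7)=7a^2(\gamma-2)(\gamma+1)$, so the boundary value vanishes exactly at $\gamma=2$ and is negative for $1\le\gamma<2$; hence the correct strategy is instead to treat $V^2>a$ strictly and observe that the full affine function, evaluated with the true coefficients, is a sum of manifestly nonnegative terms (a perfect square plus a positive multiple of $(V^2-a)$). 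I would therefore group terms as $4\bigl(3V^2+a(\gamma-2)\bigr)^2-3(V^2-a)\bigl(12V^2-a(\gamma-2)(\gamma-3)\bigr)$ and complete the square in $V^2$, writing the result as $\alpha\,(V^2-\beta)^2+\eta$ with $\eta$ a polynomial in $\gamma$ that one checks to be positive for all $\gamma\ge1$; alternatively, and more robustly, bound $12V^2-a(\gamma-2)(\gamma-3)$ below using $V^2>a$ and absorb.

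The main obstacle is purely bookkeeping: getting the coefficients of the quadratic-in-$V^2$ (really affine-in-$V^2$, after the $V^4$ cancellation) polynomial exactly right, and then choosing the grouping that exhibits positivity transparently for every $\gamma\ge1$ — including the borderline regimes $\gamma$ near $1$ and $\gamma$ near $3$, where the cubic-in-$\gamma$ coefficient $(\gamma-2)(\gamma-3)$ changes sign. I expect that after the dust settles the cleanest route is: substitute $V^2=a\,(1+s)$ with $s>0$, so that $h(1;V)=a\,s>0$, $\p_\rho h(1;V)=-a(3(1+s)+\gamma-2)=-a(\gamma+1+3s)$, $\p_\rho^2 h(1;V)=a(12(1+s)-(\gamma-2)(\gamma-3))$, and then
\[
\frac{k_2^V}{k_0^V}=\frac{1}{12}\frac{(\gamma+1+3s)^2}{s^2}-\frac{1}{16}\frac{12(1+s)-(\gamma-2)(\gamma-3)}{s}\,;
\]
multiplying by $48s^2>0$, the claim reduces to $4(\gamma+1+3s)^2-3s\bigl(12(1+s)-(\gamma-2)(\gamma-3)\bigr)>0$, and expanding gives $36s^2+24(\gamma+1)s+4(\gamma+1)^2-36s-36s^2+3s(\gamma-2)(\gamma-3)=3s\bigl(8(\gamma+1)-12+(\gamma-2)(\gamma-3)\bigr)+4(\gamma+1)^2 = 3s\bigl(\gamma^2+3\gamma+2\bigr)+4(\gamma+1)^2 = 3s(\gamma+1)(\gamma+2)+4(\gamma+1)^2$, which is a sum of two strictly positive terms for all $\gamma\ge1$ and $s>0$. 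This makes \eqref{signa2} immediate; the only thing to be careful about in the final write-up is to record these elementary identities cleanly and to note that $s>0$ is exactly the supersonic hypothesis $V>\sqrt{P'(1)}$.
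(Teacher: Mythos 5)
Your final argument is correct and is essentially the paper's proof in a slightly different normalization: both reduce $k_2^V>0$ to the positivity of $4(\p_\rho h(1;V))^2-3\,h(1;V)\,\p_\rho^2 h(1;V)$ and then split it as a positive multiple of $V^2-T\gamma$ plus the strictly positive constant term $4(T\gamma)^2(\gamma+1)^2$ (your $3s(\gamma+1)(\gamma+2)+4(\gamma+1)^2$ after the substitution $V^2=T\gamma(1+s)$ is exactly this decomposition). The exploratory middle of your write-up contains some miscomputed coefficients, but the final $s$-substitution computation is clean and self-contained, so only that part needs to be kept.
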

\begin{proof}
This stems from $V^2>T\gamma$ and the following direct computations,
\begin{align*}
h(1;V)&=V^2-T\gamma\,,\qquad \p_\rho h(1;V)=-3 V^2-T\gamma(\gamma-2)\,,\\
\p_\rho^2 h(1;V)&=12 V^2-T\gamma(\gamma-2)(\gamma-3)\,.
\end{align*}
Indeed those imply
\begin{align*}
-\p_\rho^2 h(1;V)h(1;V)&+\f{4}{3}(\p_\rho h(1;V))^2\\
&=V^2 T\gamma  (\gamma+1)(\gamma+2)+ (T\gamma)^2\left(\f{4}{3}(\gamma-2)^2-(\gamma-2)(\gamma-3)\right)\\
&>(T\gamma)^2 \left((\gamma+1)(\gamma+2)-(\gamma-2)(\gamma-3)+\f{4}{3}(\gamma-2)^2\right)
\,=\,\frac43\,(T\gamma)^2 (\gamma+1)^2.
\end{align*}
\end{proof}

For the sake of readability, from now on we shall drop marks of dependencies on $V$, since the role of $V$ is more passive than the one of $\delta$. Correspondingly we shall denote with primes the derivatives with respect to $\rho$ of functions of $(\rho;V)$.

Before going on with our analysis, we would like to stress that the family of waves built here can be continued beyond the small amplitude limit. It does terminate though, not as a solitary wave, since the system does not admit solitary waves, but as a peakon. Explicitly, when the pressure law is taken to be an increasing and strictly convex function, $h$ vanishes exactly once at some value $\rho_{max}$, and $\rho_{max}>1$. This value is the maximal value for the electronic charge density of waves and the family of periodic waves ends when this value is reached. The limiting object is a periodic peakon in orginal variables $(\rho,u)$, that is, in these variables the limiting wave is periodic, continuous, piecewise $\cC^1$, but has a jump in its first-order derivative. This jump takes place where $\rho=\rho_{max}$ and can be easily computed. We illustrate this phenomenon in Figure~\ref{fig:profiles}. We leave as a possible interesting further development the elucidation of the peakon instability and its use for smooth waves of nearly maximal amplitude.

\begin{figure}[h]
\begin{center}
\includegraphics[width=0.8\textwidth]{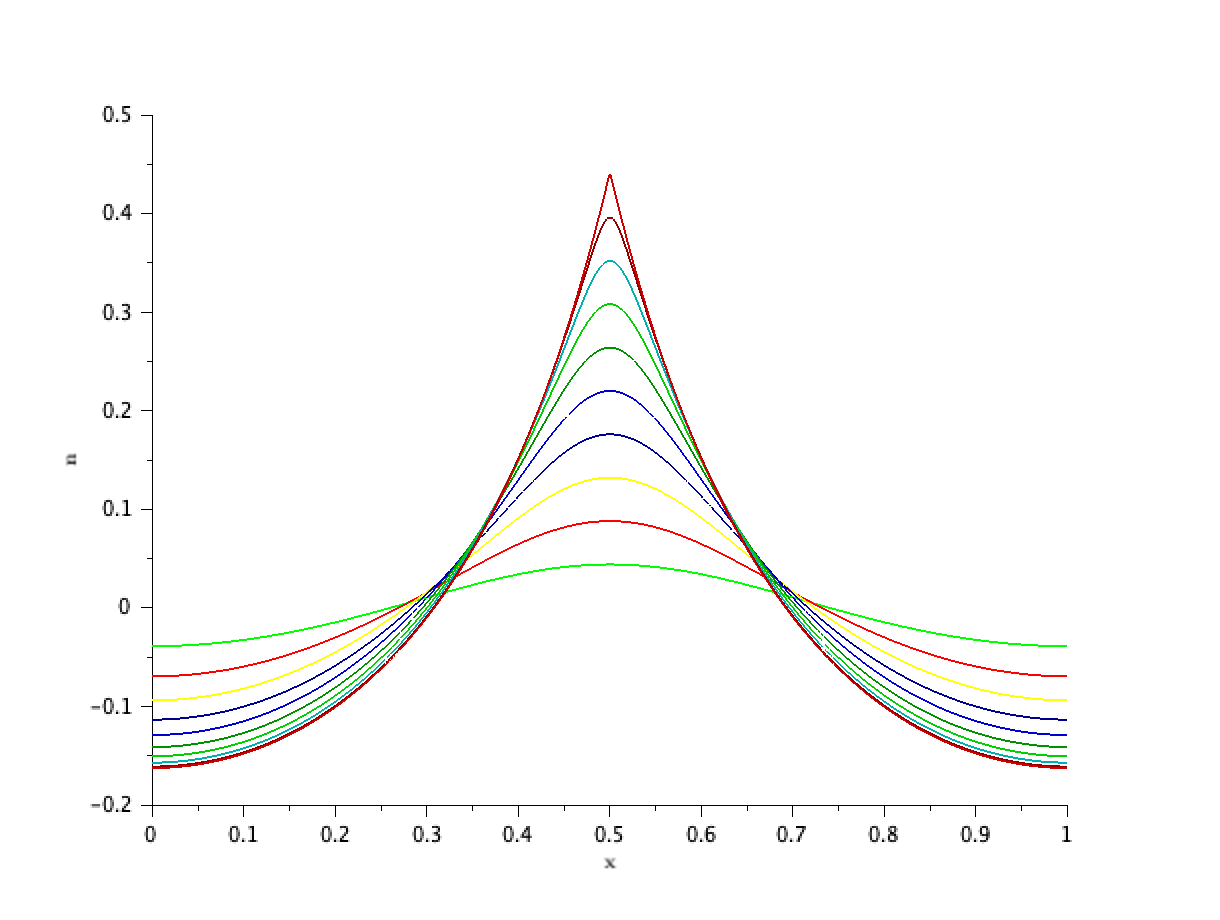} 
\caption{\label{fig:profiles}
{\bf A family of wave profiles.} Plot of the scaled profiles for the total charge $\underline{n}=\underline{\rho}-1=k\underline{E}'$ of various waves. The pressure law is $P(\rho)=\rho^2/4$. The velocity is held fixed to the value $V$ such that $k_0^V=1$.}
 \end{center}
\end{figure}

\section{Spectral preliminaries}\label{sec3}

To benefit from the structure of nearby periodic waves, it is useful not only to go to a co-moving frame, so as to make the wave stationary, but also to scale period. Namely, here, we analyze solutions $(E,u)$ to System~\eqref{eep-int} through $(\tilde E,\tilde u)$ defined by\footnote{We recall that $k^{(\delta)}$ also depends on $V$ but we do not mark this dependency anymore.}
\[
(E,u)(t,x)\,=\,(\tilde E,\tilde u)(t,k^{(\delta)}\,(x-t\,V))\,.
\]
However for concision's sake, we drop tildes and simply observe that after this change of coordinates linearizing about $(\ue^\delta,\uu^\delta)$ yields,
\begin{align*}
\p_tE&+(\uu^\delta-V)\,k^{(\delta)}\p_x\,E
+(1+k^{(\delta)}(\ue^{\delta})')\,u\,=\,0\,,\\
\p_tu&+k^{(\delta)}\p_x\left((\uu^{\delta}-V)\,u\right)
+k^{(\delta)}\p_x\left(F''(1+k^{(\delta)}(\ue^{\delta})')\,k^{(\delta)}\p_xE\right)
\,=\,E\,.
\end{align*}
The latter is also written, for $U=(E,u)$, as
\[
\p_t U\,=\,L^\delta U\,,
\]
where $L^\delta=\Sigma^L_\delta(x,k^{(\delta)}\p_x)$ is a $1$-periodic differential operator, frequency-scaled from $\Sigma^L_\delta(x,\p_x)$ the differential operator associated through standard\footnote{Not Weyl's.} quantization with symbol $\Sigma^L_\delta$
\begin{align}
\Sigma^L_\delta(\cdot,\zeta)
\nonumber
&:=\begin{pmatrix}
(V- \uu^{\delta})\,\zeta&-(1+k^{(\delta)}(\ue^{\delta})')\\
1-F''(1+k^{(\delta)}(\ue^{\delta})')\zeta^2
-F'''(1+k^{(\delta)}(\ue^{\delta})')\,(k^{(\delta)})^2(\ue^{\delta})''\,\zeta&
(V- \uu^{\delta})\zeta-k^{(\delta)}(\uu^{\delta})'
    \end{pmatrix}.
\end{align}

\subsection{Functional-analytic framework}

It is important to note that from $V>\sqrt{P'(1)}$ stems that, when $\delta$ is sufficiently small,
\[
(V- \uu^{\delta})^2-F''(1+k^{(\delta)}(\ue^{\delta})')(1+k^{(\delta)}(\ue^{\delta})')
=(V- \uu^{\delta})^2-P'(1+k^{(\delta)}(\ue^{\delta})')
\]
does not vanish so that the operator $L^\delta$ is non characteristic. As a consequence, considering $L^\delta$ as acting on $H^1(\mathbb{R})\times L^2(\mathbb{R})$ with domain $H^2(\mathbb{R})\times H^1(\mathbb{R})$, turns it into a densely defined closed operator. Moreover, from the fact that the same quantity is positively-valued one deduces that the linearized system is symmetrizable, which implies, as expected, that $L^\delta$ does generate a $\cC^0$-semigroup on $H^1(\mathbb{R})\times L^2(\mathbb{R})$.

Note that $L^\delta$ has real coefficients, thus it commutes with complex conjugation so that its spectrum is symmetric with respect to the real axis.

Furthermore, $L^\delta$ has the Hamiltonian structure
\[
L^\delta\,=\,J\,A^\delta
\]
with skew-adjoint operator $J$ given by
\begin{align*}
J\,&=\,\begin{pmatrix}0&-1\\1&0\end{pmatrix}\,,
\end{align*}
and $A^\delta$ a self-adjoint operator, explicitly, $A^\delta=\Sigma^A_\delta(x,k^{(\delta)}\p_x)$ where
\begin{align*}
\Sigma^A_\delta(\cdot,\zeta)
\nonumber
&:=\begin{pmatrix}
1-F''(1+k^{(\delta)}(\ue^{\delta})')\zeta^2
-F'''(1+k^{(\delta)}(\ue^{\delta})')\,(k^{(\delta)})^2(\ue^{\delta})''\,\zeta&
(V- \uu^{\delta})\zeta-k^{(\delta)}(\uu^{\delta})'\\
(\uu^{\delta}-V)\,\zeta&(1+k^{(\delta)}(\ue^{\delta})')\\
    \end{pmatrix}.
\end{align*}
Up to scaling, $A^\delta$ is the variational Hessian of $\mathcal{H}+V\,\mathcal{M}$ at the background wave, where $\mathcal{M}$ is the momentum density,
\[
\mathcal{M}[E,u]:=-\,u\,\p_xE\,,
\]
or, in other words, where $\mathcal{M}$ generates spatial translations in the sense that $J\delta\mathcal{M}[E,u]=\p_x(E,u)$. As a consequence, one obtains the following relation between $L^\delta$ and its adjoint\footnote{Throughout the paper we use Hilbertian formalism for adjoints, that is, we identify Hilbert spaces with their duals when considering adjoints so that operators and their adjoints act on the same spaces.} $(L^\delta)^*$,
\[
(L^\delta)^*=-J^{-1}L^\delta J\,.
\]
In particular, the spectrum of $L^\delta$ is symmetric with respect to the imaginary axis.

We recall that one of the main consequences of the latter is that the traveling wave under consideration is spectrally stable if and only if the spectrum of $L^\delta$ is contained in the imaginary axis.

\subsection{Bloch transform}

To analyze the spectrum of periodic coefficient operators it is expedient to introduce Bloch symbols, associated with the (Floquet-)Bloch transform. Thus we recall here a few basic elements of the corresponding theory.

Let us first introduce notation $\cF(f)=\hat{f}$ for the Fourier transform of $f$, defined explicitly when $f\in L^1(\mathbb{R})$ by
\begin{align*}
\cF(f)(\xi)=\hat{f}(\xi):=\f{1}{2\pi}\int_{\mR}e^{-i\xi x} f(x)\d x
\end{align*}
and recall that when both $f\in L^1(\mathbb{R})$ and $\hat f\in L^1(\mathbb{R})$
\[
f(x)\,=\,\int_{\mR} e^{ix\xi} \hat{f}(\xi)\,\d \xi
\]
holds in pointwise sense. The latter is extended by density and duality to less smooth and less localized $f$, using mostly the classical $L^2$ isometry of the Fourier transform.

Likewise, the inverse Bloch transform provides for any $f\in L^1(\mathbb{R})$ such that $\hat f\in L^1(\mathbb{R})$,
\begin{align}\label{def-bloch}
\displaystyle
    f(x)=\int_{-\pi}^{\pi} e^{ix\xi} \check{f}(\xi,x)\,\d \xi
\end{align}
where the direct Bloch transform of $f$, $\cB(f)=\check{f}$, is defined as
\begin{align}\label{def-Bloch}
\cB(f)(\xi,x)=\check{f}(\xi,x):=\sum_{j\in \mathbb{Z}} e^{i2\pi jx } \hat{f}(\xi+2j\pi)
\,=\,\sum_{\ell\in \mathbb{Z}} e^{-i\xi\,(x+\ell)} f(x+\ell)\,.
\end{align}
Note that by design, for each $\xi$, $\check{f}(\xi,\cdot)$ is periodic of period $1$, so that $f$ is written as an integral of functions $g_\xi\,=\,e^{i\xi\,\cdot} \check{f}(\xi,\cdot)$ satisfying $g_\xi(x+1)=e^{i\xi}\,g_\xi(x)$. Such functions are known as Bloch waves, the number $e^{i\xi}$ being a Floquet multiplier, and $\xi$ is classically called Floquet exponent. Various extensions are then carried out by exploiting that $\sqrt{2\pi}\,\cB$ is an isometry from $L^2(\mR)$ to $L^2((-\pi,\pi);L^2(\mR/\mZ))$.

With the help of the inverse Bloch transform \eqref{def-bloch}, one can turn an operator with periodic coefficients acting on a function over $\mR$ into a family of operators, its Bloch symbols, acting on periodic functions. Explicitly, for the case at hand,
\begin{align}
L^{\delta}(U)(x)=\int_{-\pi}^{\pi} e^{ix\xi} L_{\xi}^{\delta}(\cB{U}(\xi, \cdot))(x) \d \xi.
\end{align}
where each $L_\xi^\delta:=\Sigma^L_\delta(x,k^{(\delta)}(\p_x+i\xi))$ acts on $H^1(\mathbb{R}/\mZ)\times L^2(\mathbb{R}/\mZ)$ with domain $H^2(\mathbb{R}/Z)\times H^1(\mathbb{R}/Z)$. The main gain when replacing the direct analysis of $L^\delta$ with those of $L_\xi^\delta$ is that each $L_\xi^\delta$ has compact resolvent thus spectrum reduced to eigenvalues of finite multiplicity, arranged discretely. A key related observation is that
\begin{align*}
    \sigma(L^{\delta})=\bigcup_{\xi\in[-\pi,\pi]}\sigma_{per}(L_{\xi}^{\delta}),
\end{align*}
where we have added the suffix $per$ to mark that each $L_{\xi}^{\delta}$ acts on functions over $\mR/\mZ$. The latter decomposition is by now classical in the field and we refer the reader for instance to \cite[Appendix~A]{Mielke} or \cite[p.30-31]{R} for a proof.

To conclude, we point out how real and Hamiltonian symmetries are transferred to Bloch symbols, namely
\begin{align*}
\overline{L_\xi^\delta\,U}&=L_{-\xi}^\delta\,\overline{U}\,,&
(L_\xi^\delta)^*&=-J^{-1}L_\xi^\delta J\,.
\end{align*}

\subsection{Constant-coefficient computations}

Since our analysis is perturbative from the constant-coefficient case --- when $\delta=0$ ---, we need to recast some of the classical Fourier constant-coefficient computations in terms of Floquet-Bloch analysis.

Since $\Sigma^L_0(x,\zeta)$ does not depend on $x$, let us accordingly denote by $\Sigma^L_0(\zeta)$ its value. By using spectral decomposition arising from the introduction of Fourier series, one obtains
\[
\sigma_{per}(L_{\xi}^0)
=\bigcup_{j\in \mathbb{Z}}\,\sigma\left(\Sigma^L_0\left(k^{(0)}(2\pi\,j+\xi)\right)\right)
=\bigcup_{j\in \mathbb{Z}} \{\lambda_{-}^{j}(\xi);\lambda_{+}^{j}(\xi)\}
\]
where
\begin{align}\label{eigenvalue}
\lambda_{\pm}^{j}(\xi)&:=\lambda_{\pm}(k^{(0)}\,(2\pi j+\xi))\,,&
\lambda_{\pm}(\zeta)&:=i \left(V\,\zeta\pm \sqrt{1+P'(1)\zeta^2}\right)\,.
\end{align}
Note incidentally that combining the latter Fourier series spectral decomposition with the Bloch transform spectral decomposition recovers the classical Fourier transform spectral decomposition
\[
\sigma(L^0)
=\bigcup_{\zeta\in\mR}\,\sigma\left(\Sigma^L_0\left(\zeta\right)\right)
=\bigcup_{\zeta\in \mR} \{\lambda_{-}(\zeta);\lambda_{+}(\zeta)\}\,.
\]
For later use, we also introduce notation $\varphi_{\pm}^{j}(\xi, \cdot)$ for Bloch eigenfunctions associated with $\lambda_{\pm}^{j}(\xi)$,
\begin{align}\label{defbasis-constant}
\varphi_{\pm}^{j}(\xi,x)
:=e^{i\,2\pi j\,x}\begin{pmatrix}1\\\mp\,i\omega_{j}(\xi)\end{pmatrix}
\end{align}
where
\begin{equation}\label{defomega}
\omega_{j}(\xi):=\sqrt{1+ P'(1) (k^{(0)})^2(2\pi j+\xi)^2}.
\end{equation}

It follows readily from Hamiltonian symmetry that unstable spectra may only arise from multiple eigenvalues. With this in mind, we observe that from $V>\sqrt{P'(1)}$ stems that the functions $\zeta\mapsto \Im(\lambda_\pm(\zeta))$ are both strictly increasing (with derivative bounded away from zero). Therefore multiplicity is at most two and we only need to investigate the existence of $(j,j',\xi)$ such that $\lambda_-^j(\xi)=\lambda_+^{j'}(\xi)$. This is the purpose of the following lemma.
\begin{lem}\label{l:0-lemma}
\begin{enumerate}
\item For any  $(j,j',\xi)\in\mZ^2\times[-\pi,\pi]$ such that $j'\geq j-1$, $\lambda_-^j(\xi)\neq\lambda_+^{j'}(\xi)$.
\item There exists a unique $(j_2,j_2',\xi_2)\in\mZ^2\times[-\pi,\pi)$ such that $j_2'=j_2-2$ and $\lambda_-^{j_2}(\xi_2)=\lambda_+^{j_2'}(\xi_2)$.
\item For any $\ell\in\mN$, $\ell> 2$, there exist exactly two $(j_{\pm,\ell},j_{\pm,\ell}',\xi_{\pm,\ell})\in\mZ^2\times[-\pi,\pi)$ such that $j_{\pm,\ell}'=j_{\pm,\ell}-\ell$ and $\lambda_-^{j_{\pm,\ell}}(\xi_{\pm,\ell})=\lambda_+^{j_{\pm,\ell}'}(\xi_{\pm,\ell})$.
\end{enumerate}
Moreover, $j_2=1$, $j_{2}'=-1$, $\xi_2=0$ and one may normalize with
\begin{align}\label{para-jk}
j_{\pm,\ell}&=\left\lfloor\f{1}{2}\bigg(\ell+1\pm \f{\sqrt{\ell^2-4}}{\sqrt{P'(1)}/V}\bigg)\right\rfloor,&
j'_{\pm,\ell}&=j_{\pm,\ell}-\ell,&
\f{\xi_{\pm,\ell}}{2\pi}&=\left\{\f{1}{2}\bigg(\ell+1\pm \f{\sqrt{\ell^2-4}}{\sqrt{P'(1)}/V}\bigg)\right\}-\f{1}{2}
\end{align}
with $\lfloor\cdot\rfloor$ denoting the least integer part and $\{\cdot\}$ being the associated fractional part.
\end{lem}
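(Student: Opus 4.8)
The plan is to reduce everything to the analysis of the equation $\lambda_-^j(\xi)=\lambda_+^{j'}(\xi)$ via the explicit formulas \eqref{eigenvalue}. Writing $\ell:=j-j'$ and using that $\Im \lambda_\pm(\zeta)=V\zeta\pm\sqrt{1+P'(1)\zeta^2}$, the crossing condition $\lambda_-^j(\xi)=\lambda_+^{j'}(\xi)$ becomes, after setting $a:=k^{(0)}(2\pi j+\xi)$ and $b:=k^{(0)}(2\pi j'+\xi)$,
\[
V a-\sqrt{1+P'(1)a^2}\,=\,V b+\sqrt{1+P'(1)b^2}\,,
\]
with $a-b=2\pi\,k^{(0)}\,\ell$. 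Recalling that $k^{(0)}=k_0^V=(2\pi\sqrt{h(1;V)})^{-1}$ and $h(1;V)=V^2-P'(1)$, this is $a-b=\ell/\sqrt{V^2-P'(1)}$. So the whole problem becomes: given $\ell\in\mathbb{Z}$, count the pairs $(a,b)\in\mathbb{R}^2$ with $a-b$ equal to a prescribed positive constant $c_\ell:=\ell/\sqrt{V^2-P'(1)}$, satisfying $V(a-b)=\sqrt{1+P'(1)a^2}+\sqrt{1+P'(1)b^2}$, and then translate solutions back through $(j,j',\xi)$.

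\textbf{Key steps.} First I would rewrite the crossing condition as $V c_\ell=\sqrt{1+P'(1)a^2}+\sqrt{1+P'(1)b^2}$ with $b=a-c_\ell$; call the right-hand side $\Phi(a)$. Since $\Phi$ is strictly convex, tends to $+\infty$ at $\pm\infty$, and attains its minimum at some $a_*$, the number of solutions in $a$ is $0$, $1$, or $2$ according to whether $V c_\ell$ is below, equal to, or above $\min\Phi$. A direct computation (minimize the sum of the two ``hyperbolic'' terms under the linear constraint $a-b=c_\ell$) gives $\min\Phi=\sqrt{P'(1)\,c_\ell^2+4}$ — this is the standard ``two hyperbolas'' picture and the minimum is attained when the two square roots are equal, i.e. $a=-b=c_\ell/2$. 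Hence the dichotomy is governed by the sign of
\[
V^2 c_\ell^2-\big(P'(1)c_\ell^2+4\big)\,=\,(V^2-P'(1))c_\ell^2-4\,=\,\ell^2-4,
\]
using $c_\ell^2=\ell^2/(V^2-P'(1))$. This instantly yields: for $\ell\leq 1$ no crossing of the type $\lambda_-^j=\lambda_+^{j'}$ (item (1), after also checking the sign conventions ruling out $a-b\leq 0$, i.e. $\ell\leq 0$, and the boundary case $\ell=1$ strictly); for $\ell=2$, exactly one solution $a=-b=c_2/2=1/\sqrt{V^2-P'(1)}$, giving item (2); and for $\ell>2$, exactly two solutions $a_\pm$, giving item (3). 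For the normalization \eqref{para-jk}: from $a=k_0^V(2\pi j+\xi)$ one has $j+\xi/(2\pi)=2\pi k_0^V\cdot\! a\cdot\!(2\pi)^{-1}\cdot(2\pi) = a/(2\pi k_0^V)=a\sqrt{V^2-P'(1)}$; solving the quadratic $\Phi(a)=Vc_\ell$ explicitly gives $a_\pm\sqrt{V^2-P'(1)}=\tfrac12\big(\ell+1\pm\sqrt{\ell^2-4}\,/(\sqrt{P'(1)}/V)\big)$ after simplification, and then $j_{\pm,\ell}=\lfloor\cdot\rfloor$, $\xi_{\pm,\ell}/(2\pi)=\{\cdot\}-\tfrac12$ follow by separating integer and fractional parts (the shift by $\tfrac12$ because $\xi$ ranges over $[-\pi,\pi)$ rather than $[0,2\pi)$). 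The values $j_2=1$, $j_2'=-1$, $\xi_2=0$ drop out of the $\ell=2$ computation, since there $a=1/\sqrt{V^2-P'(1)}$ forces $j+\xi/(2\pi)=1$.

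\textbf{Main obstacle.} The genuinely computational heart is the explicit minimization giving $\min\Phi=\sqrt{P'(1)c_\ell^2+4}$ and then solving the resulting quadratic in closed form to land exactly on the floor/fractional-part formulas \eqref{para-jk}; this is elementary but bookkeeping-heavy, and one must be careful with: (i) the sign of $a-b$ (only $\ell\geq 1$, hence $j'\leq j-1$, can possibly give a crossing, which is why item (1) phrases the \emph{absence} as $j'\geq j-1$), and (ii) the boundary cases $\ell=1$ (strict inequality $\ell^2-4<0$) and $\ell=2$ (equality, single root). A secondary point requiring care is uniqueness of the triple in item (2) and the count ``exactly two'' in item (3) as triples $(j,j',\xi)\in\mathbb{Z}^2\times[-\pi,\pi)$: each admissible value of $a$ determines $j+\xi/(2\pi)\in\mathbb{R}$ uniquely, hence a unique $(j,\xi)$ with $\xi\in[-\pi,\pi)$, and then $j'=j-\ell$; so the count in $a$ transfers verbatim to the count in triples, provided one notes that the two roots $a_\pm$ for $\ell>2$ are distinct (they are, since $\ell^2-4>0$) and never differ by an integer multiple of $2\pi k_0^V$ in a way that would merge triples — which cannot happen because they share the same $\ell$ and distinct $a$ forces distinct $(j,\xi)$. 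I expect essentially no conceptual difficulty, only the need to carry out the algebra cleanly.
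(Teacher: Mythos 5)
Your argument is correct and lands on the same quadratic as the paper, but the counting step is organized genuinely differently. The paper works with $z=j+\xi/2\pi$, $z'=j'+\xi/2\pi$ and proceeds purely algebraically: it multiplies by the conjugate to extract the relation $\sqrt{1-\tau+\tau z^2}-\sqrt{1-\tau+\tau (z')^2}=\tau(z+z')$ (with $\tau=P'(1)/V^2$), must then check that the manipulation is reversible --- which it does via the bound that the difference quotient of $w\mapsto\sqrt{1-\tau+\tau w^2}$ has absolute value $<1$ since $\tau<1$ --- and finally records the squared equation $\tfrac14(z-z')^2-1=\tau\big(\tfrac{z+z'}{2}\big)^2$ together with a sign constraint that turns out to be automatic. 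You instead fix $\ell$ and read the crossing condition as a level-set equation $\Phi(a)=Vc_\ell$ for a strictly convex, coercive $\Phi$, so the $0/1/2$ count and the threshold $\ell^2=4$ drop out of $\min\Phi=\sqrt{P'(1)c_\ell^2+4}$ with no reversibility-of-squaring issues; this is a clean alternative for the counting in items (1)--(3), and it even does double duty later, since knowing there are exactly two genuine solutions for $\ell>2$ certifies that both roots of the (quadratic) squared equation are genuine, which is what the paper's sign-constraint check accomplishes.

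One slip to correct when you write the explicit formulas: your displayed value $a_\pm\sqrt{V^2-P'(1)}=\tfrac12\big(\ell+1\pm\sqrt{\ell^2-4}\,/(\sqrt{P'(1)}/V)\big)$ is off by $\tfrac12$. The correct value of $z_\pm=j_{\pm,\ell}+\xi_{\pm,\ell}/(2\pi)=a_\pm\sqrt{V^2-P'(1)}$ is $\tfrac12\big(\ell\pm\sqrt{\ell^2-4}\,/(\sqrt{P'(1)}/V)\big)$ (consistent with the symmetry $z_++z_-=\ell$ coming from $a\mapsto c_\ell-a$); the ``$+1$'' and the ``$-\tfrac12$'' belong only to the floor/fractional-part bookkeeping that places $\xi/(2\pi)$ in $[-\tfrac12,\tfrac12)$, exactly as written in \eqref{para-jk}. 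This does not affect the validity of the approach.
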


The proof of the lemma is elementary but relies on arguments far from those of the rest of the paper so that we have postponed it to Appendix~\ref{s:0}.

Note that, except for the crossing associated with $\ell=2$, crossings occur at non zero eigenvalues and
\begin{align*}
j_{+,\ell}+\frac{\xi_{+,\ell}}{2\pi}&=
-\left(j'_{-,\ell}+\frac{\xi_{-,\ell}}{2\pi}\right)\,,&
j'_{+,\ell}+\frac{\xi_{+,\ell}}{2\pi}&=
-\left(j_{-,\ell}+\frac{\xi_{-,\ell}}{2\pi}\right)\,,
\end{align*}
so that crossings associated with the same frequency gap $\ell\geq 3$ are obtained from the other by using real symmetry. In particular, from the point of view of stability, by using real symmetry, one may reduce studies to crossings labeled by $(j_2,j_2',\xi_2)$ and $(j_{+,\ell},j_{+,\ell}',\xi_{+,\ell})$, $\ell\geq 3$.

In the rest of the paper we study how $\ell=2$ and $\ell=3$ crossings perturb.

\section{Slow modulation stability}\label{sec4}

We begin with the $\ell=2$ crossing, that is, we study the spectrum of $L^\delta_\xi$ near the origin when $(\delta,\xi)$ is small. Our main conclusion is summarized in the following result.

\begin{thm}\label{th4}
Under the assumptions of Theorem~\ref{main}, with
\[
I_{mod}:=\left\{\,V\in(\sqrt{P'(1)},+\infty)\,;\,k_2^V>0\,\right\}
\]
there exists a smooth function $\epsilon_{mod}:I_{mod}\to \mathbb{R}_+^*$ such that when $V\in I_{mod}$, for any $0\leq\delta\leq \epsilon_{mod}(V)$, $|\xi|\leq \epsilon_{mod}(V)$,
\[
\displaystyle
\sigma\left(L_{\xi}^{\delta}\right)\cap B_{\epsilon_{mod}(V)}(0)\subset i\mathbb{R}\,.
\]
\end{thm}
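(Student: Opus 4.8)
The plan is to study the $2\times 2$ reduced matrix $D_\xi^\delta$ governing the spectrum of $L_\xi^\delta$ near $0$ when $(\delta,\xi)$ is small, and to show that in the regime $V\in I_{mod}$ its two eigenvalues remain purely imaginary. First I would set up the Kato perturbation machinery sketched in the introduction: at $\delta=0$, $\xi=0$, the origin is a double eigenvalue of $L^0_0$ with $j_2=1$, $j_2'=-1$, coming from the collision $\lambda_-^1(0)=\lambda_+^{-1}(0)=0$ described in Lemma~\ref{l:0-lemma}. One checks directly from \eqref{defbasis-constant} that the two constant-coefficient Bloch eigenfunctions are $\varphi_-^1(0,\cdot)=e^{2\pi i x}(1,-i)^t$ and $\varphi_+^{-1}(0,\cdot)=e^{-2\pi i x}(1,i)^t$; here $|j_1-j_2|=2$ so the off-diagonal coefficient $\beta$ will be $\mathcal{O}(\delta^2)$, but crucially we must keep track of both $\beta$ and $\alpha_1-\alpha_2$ to higher order than in the generic $\ell=3$ case, because $\p_\xi(\alpha_1-\alpha_2)(\xi_0,0)=0$ here.

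Next I would exploit the Hamiltonian symmetry $(L_\xi^\delta)^*=-J^{-1}L_\xi^\delta J$ together with the real symmetry to normalize the direct and dual bases so that $\tilde q_\ell^\delta=\varepsilon_\ell\,iJ^{-1}q_\ell^\delta$, putting $D_\xi^\delta$ in the form
\[
D_\xi^\delta=i\begin{pmatrix}\alpha_1(\xi,\delta)&\beta(\xi,\delta)\\[0.3em]-\overline{\beta(\xi,\delta)}&\alpha_2(\xi,\delta)\end{pmatrix},\qquad \alpha_m\in\mathbb{R},
\]
after first confirming (by a constant-coefficient Krein signature computation, using that $iJ^{-1}$ restricted to $\Span\{\varphi_-^1(0,\cdot),\varphi_+^{-1}(0,\cdot)\}$ is indefinite) that the relevant case is $\varepsilon_1\varepsilon_2<0$. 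The eigenvalues are $i\big(\tfrac{\alpha_1+\alpha_2}{2}\pm\sqrt{\tfrac{(\alpha_1-\alpha_2)^2}{4}-|\beta|^2}\big)$, so purely imaginary spectrum near $0$ is equivalent to the discriminant-type quantity
\[
\Delta(\xi,\delta):=\frac{(\alpha_1(\xi,\delta)-\alpha_2(\xi,\delta))^2}{4}-|\beta(\xi,\delta)|^2
\]
being nonnegative for all small $(\xi,\delta)$. The structure to establish is that $\alpha_1-\alpha_2$, which vanishes at $(\xi,\delta)=(0,0)$ with vanishing $\xi$-derivative, nonetheless dominates $|\beta|$. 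Writing $(\alpha_1-\alpha_2)(\xi,\delta)=a\,\xi^2 + b\,\delta\,\xi + c\,\delta^2 + (\text{h.o.t.})$ and $\beta(\xi,\delta)=\Gamma_2\,\delta^2+\mathcal{O}(\delta^2\|(\xi,\delta)\|)$, one needs to show that the quadratic form $\tfrac14(a\xi^2+b\delta\xi+c\delta^2)^2 - |\Gamma_2|^2\delta^4$, which is a quartic in $(\xi,\delta)$, is positive away from the origin; this is where $k_2^V>0$ must enter. I would compute the coefficients $a,b,c,\Gamma_2$ explicitly using the profile expansions \eqref{ued1}, \eqref{ued2}, \eqref{ued3} and the symbol $\Sigma^L_\delta$, and verify that $(a\xi^2+b\delta\xi+c\delta^2)^2-4|\Gamma_2|^2\delta^4$ factors as a product of two definite quadratics — equivalently, that $ac - b^2/4 \mp |\Gamma_2|$ have the appropriate signs — the condition for which should reduce, after the dust settles, exactly to $k_2^V>0$. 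The connection to modulation theory in Appendix~\ref{secW} serves as an independent consistency check: the vanishing of $\p_\xi(\alpha_1-\alpha_2)$ at the origin reflects the coincidence of two characteristic speeds of the Whitham system, and the sign of $k_2^V$ controls whether the modulation system is weakly hyperbolic or elliptic.

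The main obstacle I anticipate is the bookkeeping in the higher-order perturbation expansion: because we are at $|j_1-j_2|=2$ with a degenerate $\xi$-linearization, the naive leading-order reduction is insufficient and one must carry the Kato expansion of $q_m^\delta(\xi,\cdot)$, $\tilde q_\ell^\delta(\xi,\cdot)$, and hence of $\alpha_m$ and $\beta$, to second order in $(\xi,\delta)$ jointly — including the $\delta^2$ correction $k_2^V$ to the wavenumber and the $\ue_2^V$ profile correction, which feed into both the diagonal and off-diagonal entries at the order that matters. Keeping these contributions organized so that the final discriminant collapses to a manifestly $k_2^V$-dependent sign condition, rather than an opaque expression, is the delicate part; once the coefficients are in hand, the conclusion follows by choosing $\epsilon_{mod}(V)$ small enough that the error terms are dominated by the leading quartic, and smoothness of $\epsilon_{mod}$ in $V$ comes for free from the smooth dependence of all ingredients on $V$ on the open set $I_{mod}$.
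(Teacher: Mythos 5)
Your reduction framework (Bloch analysis, Kato extension, Hamiltonian symmetry, a $2\times 2$ matrix whose discriminant decides stability) is the right one, but the central step of your plan fails. You propose to show that the leading quartic $\tfrac14(a\xi^2+b\delta\xi+c\delta^2)^2-|\Gamma_2|^2\delta^4$ is \emph{positive away from the origin}, i.e.\ factors as a product of two definite quadratics. This cannot be true. By translation invariance $(\underline{U}^{\delta})'$ lies in the kernel of $L_0^{\delta}$, and differentiating the profile equations with respect to the wave parameters produces a Jordan block of size $2$; hence for every small $\delta>0$ the spectrum of $L_0^{\delta}$ in a small ball around $0$ is exactly $\{0\}$ with algebraic multiplicity two. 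In your normalization this forces $\alpha_1+\alpha_2=0$ \emph{and} $\tfrac14(\alpha_1-\alpha_2)^2-|\beta|^2=0$ identically on the line $\xi=0$. Your quartic must therefore vanish there, so $c^2/4=|\Gamma_2|^2$, one of your two quadratic factors degenerates to $\xi(a\xi+b\delta)$, and the product changes sign unless the cross coefficient $b$ vanishes. So the conclusion you are after is not strict positivity but nonnegativity of a discriminant that degenerates on $\{\xi=0\}$, and controlling its sign \emph{near} that line is precisely the delicate point: uncontrolled error terms of type $\xi\,\delta^3$ in your expansion could tip it negative.

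What rescues the argument — and what your proposal does not identify — is a pair of exact structural cancellations, which the paper makes manifest by choosing a different basis: instead of extending the Fourier eigenfunctions in $\delta$, it extends in $\xi$ the basis $q_1^{\delta}(0,\cdot)=(\underline{U}^{\delta})'/\delta$, $q_2^{\delta}(0,\cdot)=\p_{\delta}\underline{U}^{\delta}-(\p_{\delta}k^{(\delta)}/\p_{V}k^{(\delta)})\,\p_{V}\underline{U}^{\delta}$ adapted to the wave family. In that basis $D_\xi^\delta=\begin{pmatrix}a&c\\ b&-\overline{a}\end{pmatrix}$ with $a\in i\mathbb{R}$ and $b,c\in\mathbb{R}$, so stability reads $bc\le 0$, and the two key facts are: (i) the coefficient of $\xi$ in $b$ vanishes identically, because $L_{[1]}^{\delta}q_1^{\delta}(0,\cdot)+\tfrac{k^{(\delta)}}{\p_Vk^{(\delta)}}q_1^{\delta}(0,\cdot)\in\Ran(L_0^{\delta})$ is orthogonal to the dual kernel (this is the rigorous counterpart of slow modulation theory, not a coefficient that happens to cancel after computation); (ii) the coefficient of $\xi$ in $c$ is $O(\delta^2)$ by a reality argument. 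One then gets $b=\xi^2(b_{20}+O(|\xi|+\delta))$ with $b_{20}=\pi\,\omega_1''(0)>0$ and $c=c_{20}\xi^2+c_{02}\delta^2+O(|\xi|^2(|\xi|+\delta)+\delta^3)$ with $c_{20}<0$ and $c_{02}=-k_2^V(V^2-P'(1))/V<0$; the sign condition $k_2^V>0$ enters only through $c_{02}$. Your plan of ``computing the coefficients $a,b,c,\Gamma_2$ from the profile expansions and hoping the dust settles'' would leave exactly the cross terms that these identities kill, and without them the nonnegativity of the discriminant near $\xi=0$ is not established.
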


In the foregoing statement and throughout the text $B_r(\gamma_0)$ denotes the open ball of the complex plane centered at $\gamma_0$ with radius $r$.

Our proof also shows that $k_2^V<0$ implies instability for sufficiently small-amplitude waves; see Remark~\ref{rk:BF}. But since for power laws, $I_{mod}=(\sqrt{P'(1)},+\infty)$, we do not give details on the latter.

The result covers a region larger than obtained by taking the small-amplitude limit of the slow modulational regime, as in \cite{Nonlinearity-BMR2,SMF-Audiard-Rodrigues}. The latter corresponds to first, holding $(\delta,V)$ fixed, consider small spectral and Floquet parameters, then sending $\delta$ to $0$ in the obtained criterion. One of the interesting features of the slow modulational regime is that it is connected to geometrical optics \`a la Whitham, so that its conclusions may be guessed by arguing heuristically. For the convenience of the reader, we provide some elements of this formal analysis in Appendix~\ref{secW}.

\subsection{Finite-dimensional reduction}\label{s:reduc-mod}

Our first step is to provide a reduction to the consideration of the spectrum of a $2\times 2$ matrix parametrized by $(\delta,\xi)$.

For comparison, note that the starting point of the rigorous mathematical spectral analysis of the slow modulational regime is that for any $\delta>0$ the spectrum of $L_0^\delta$ near the origin is reduced to $\{0\}$ and the associated structure is described\footnote{Under generic assumptions, satisfied here for small-amplitude waves.} in terms of the structure of the family of traveling waves. Note in particular that the spectrum does not move when one varies $\delta$ but holds $\xi$ fixed to zero. For this reason, it is convenient even for our analysis where no size comparison is assumed between $|\xi|$ and $\delta$ to organize the computation as in \cite{JNS-BNR,Nonlinearity-BMR2,SMF-Audiard-Rodrigues}.

Thus, as in these references, we begin by gathering what may be obtained by differentiating wave profile ODEs with respect to parameters, namely,
\begin{align*}
L_0^{\delta} ((\uU^{\delta})')&=0\,,\\
L_0^{\delta} (\p_{\delta}\uU^{\delta})&=-\p_{\delta}k^{(\delta)} L_{[1]}^{\delta} (\uU^\delta)'\,,\\
L_0^{\delta} (\p_{V}\uU^{\delta})&=-k^{(\delta)} (\uU^{\delta})'-\p_{V}k^{(\delta)}L_{[1]}^{\delta} (\uU^{\delta})'
\end{align*}
where we have introduced notation for the following Floquet expansion
\begin{align*}
L_{\xi}^{\delta}=L_0^{\delta}+ i\,k^{(\delta)}\xi\ L_{[1]}^{\delta}
+(i\,k^{(\delta)}\xi)^2 L_{[2]}^{\delta}\,.
\end{align*}
Consistently with the above claim one obtains a Jordan chain of size $2$ for the eigenvalue $0$ by combining $(\uU^{\delta})'$ and a suitable combination of $\p_{\delta}\uU^{\delta}$ and $\p_{V}\uU^{\delta}$. In view of expansions from Section~\ref{sec2}, in order to obtain smooth non trivial limits when $\delta\to0$, we set
\begin{align}\label{defbases-0}
\displaystyle
q_1^{\delta}(0,\cdot)&:=\f{(\uU^{\delta})'}{\delta}\,,&
q_2^{\delta}(0,\cdot)&:= \p_{\delta} \uU^{\delta} -\f{\p_{\delta} k^{(\delta)}}{\p_{V}k^{(\delta)}}\p_{V}\uU^{\delta}
\end{align}
so that
\begin{align}\label{usefulid}
L_0^{\delta}q_1^{\delta}(0,\cdot)&=0,& L_0^{\delta}q_2^{\delta}(0,\cdot)&=\delta  k^{(\delta)}\f{\p_{\delta} k^{(\delta)}}{\p_{V}k^{(\delta)}}q_1^{\delta}(0,\cdot)
\end{align}
and, at the limit $\delta=0$,
\begin{align}\label{basis-0xi}
q_1^{0}(0,x)&=2\pi \left( \begin{array}{c}
     \sqrt{h(1)} \cos (2\pi x) \\
          -V \sin(2\pi x)
    \end{array}\right)\,,&
q_2^{\delta}(0,x)&=  \left( \begin{array}{c}
     \sqrt{h(1)} \sin (2\pi x) \\
          V \cos(2\pi x)
    \end{array}\right)\,.
\end{align}

Therefore for some $\epsilon_0>0$, when $0\leq\delta<\epsilon_0$, $(q_1^{\delta}(0,\cdot),q_2^{\delta}(0,\cdot))$ form a basis of the sum of characteristic spaces of $L_0^\delta$ associated with eigenvalues in $B_{\epsilon_0}(0)$, and in this basis, $L_0^\delta$ restricted to this space has matrix
\begin{align}\label{D0delta}
    D_{0}^{\delta}:=\left(\begin{array}{cc}
        0 &  \delta  k^{(\delta)}\f{\p_{\delta} k^{(\delta)}}{\p_{V}k^{(\delta)}} \\
       0  & 0
    \end{array}\right)
\end{align}
where
\begin{align*}
\delta  k^{(\delta)}\f{\p_{\delta} k^{(\delta)}}{\p_{V}k^{(\delta)}}
&\stackrel{\delta\to0}{=}c_{02}\delta ^2+\cO(\delta^3)\,,&
c_{02}:=-\f{k_2^V\,(V^2-P'(1))}{V}<0\,,
\end{align*}
provided that $k_2^V>0$ as we have assumed.

We want to extend these computations to small nonzero $\xi$. This may be carried out by extending the basis $(q_1^{\delta}(0,\cdot),q_2^{\delta}(0,\cdot))$ following Kato's perturbation theory, especially \cite[p.99-100]{Kato}. To begin with, when $(\xi,\delta,\epsilon_0)$ is sufficiently small, we may define
\begin{align}\label{def-projector}
    \Pi_{\xi}^{\delta}:=\f{1}{2i \pi} \oint_{\p (B_{\epsilon_0}(0))}
    (\lambda I-L_{\xi}^{\delta})^{-1} \d \lambda.
\end{align}
the spectral projector of $L_\xi^\delta$ on the sum of characteristic spaces associated with eigenvalues in $B_{\epsilon_0}(0)$. An extension operator $\mathcal{U}^\delta({\xi})$ is then obtained by solving the Cauchy problem
\begin{align}\label{def-extop}
\p_\xi\mathcal{U}^{\delta}(\xi)
&=[\p_\xi\Pi_{\xi}^{\delta}, \Pi_{\xi}^{\delta}]\ \mathcal{U}^{\delta}(\xi)\,,
&\mathcal{U}^{\delta}(0)=I\,,
\end{align}
where $[\cdot,\cdot]$ denotes the commutator,  $[A,B]:=AB-BA$. Setting
\begin{align*}
q_j^\delta(\xi,\cdot)&:=\mathcal{U}^{\delta}(\xi) q_j^{\delta}(0,\cdot)\,,&
j=1,2,
\end{align*}
does provide a basis of $\Ran(\Pi_{\xi}^{\delta})$. To obtain a matrix for the action of $L_\xi^\delta$ in the corresponding basis, it is convenient to introduce a dual basis, associated with the spectrum of $(L_\xi^\delta)^*$ in $B_{\epsilon_0}(0)$, that is, spanning $\Ran(\Pi_{\xi}^{\delta})^*$.

We first provide a dual basis when $\xi=0$, so as to extend it later. By Hamiltonian symmetry, $(J^{-1}\,q_1^{\delta}(0,\cdot),J^{-1}\,q_2^{\delta}(0,\cdot))$ spans $\Ran(\Pi_{0}^{\delta})^*$. By skew-symmetry of $J$,
\begin{align*}
\langle J^{-1} q_j^{\delta}(0,\cdot),q_j^{\delta}(0,\cdot)\rangle&=0\,,\quad j=1,2,&
\langle J^{-1} q_1^{\delta}(0,\cdot),q_2^{\delta}(0,\cdot)\rangle&=
-\langle J^{-1} q_2^{\delta}(0,\cdot),q_1^{\delta}(0,\cdot)\rangle\,,
\end{align*}
whereas a direct computation gives
\begin{align*}
\langle J^{-1} q_2^{0}(0,\cdot),q_1^{0}(0,\cdot)\rangle&=V\,2\pi\,\sqrt{h(1)}\neq 0\,,
\end{align*}
where $\langle\cdot,\cdot\rangle$ denotes $L^2(\mR/\mZ)^2$ scalar product,
\[
\langle f,g\rangle:=\int_0^1 \overline{f}\cdot g\,.
\]
Thus
\begin{align*}
    \tilde{q}_1^{\delta}(0,\cdot)&:=\f{J^{-1} q_2^{\delta}(0,\cdot)}{\langle J^{-1} q_2^{\delta}(0,\cdot), q_1^{\delta}(0,\cdot)\rangle}\,,&
    \tilde{q}_2^{\delta}(0,\cdot)&:=\f{-J^{-1} q_1^{\delta}(0,\cdot)}{\langle -J^{-1} q_1^{\delta}(0,\cdot), q_2^{\delta}(0,\cdot)\rangle}\,,
\end{align*}
provides the sought basis for $\xi=0$. Then one gets the required $(\tilde{q}_1^{\delta}(\xi,\cdot),\tilde{q}_2^{\delta}(\xi,\cdot))$ through
\[
\tilde{q}_j^{\delta}(\xi,\cdot):=
\mathcal{V}^\delta(\xi)\,\tilde{q}_j^\delta(\xi,\cdot)\,,
\qquad j=1,2,
\]
where the dual extension operator $\mathcal{V}^\delta(\xi)$ is obtained from the Cauchy problem
\begin{align*}
\p_\xi\mathcal{V}^{\delta}(\xi)
&=[\p_\xi(\Pi_{\xi}^{\delta})^*,(\Pi_{\xi}^{\delta})^*]\ \mathcal{V}^{\delta}(\xi)\,,
&\mathcal{V}^{\delta}(0)=I\,.
\end{align*}

Note that the general construction gives $(\mathcal{V}^{\delta}(\xi))^*\,\mathcal{U}^{\delta}(\xi)=I$, for any small $\xi$, which is the key to preserve duality relations. Moreover the Hamiltonian symmetry also yields for any small $\xi$
\[
(\Pi_\xi^\delta)^*=-J^{-1}\,\Pi_\xi^\delta\,J
\]
thus for any small $\xi$
\[
\mathcal{V}^{\delta}(\xi)\,=\,J^{-1}\,\mathcal{U}^{\delta}(\xi)\,J\,.
\]
To check the latter equality one simply needs to notice that the right-hand side term solves the same Cauchy problem as $\mathcal{V}^{\delta}$. As a direct consequence,
\begin{align}\label{defdualbase}
    \tilde{q}_1^{\delta}(\xi,\cdot)&:=\f{J^{-1} q_2^{\delta}(\xi,\cdot)}{\langle J^{-1} q_2^{\delta}(\xi,\cdot), q_1^{\delta}(\xi,\cdot)\rangle}\,,&
    \tilde{q}_2^{\delta}(\xi,\cdot)&:=\f{-J^{-1} q_1^{\delta}(\xi,\cdot)}{\langle -J^{-1} q_1^{\delta}(\xi,\cdot), q_2^{\delta}(\xi,\cdot)\rangle}\,,
\end{align}
with
\[
\langle J^{-1} q_j^{\delta}(\xi,\cdot), q_{\ell}^{\delta}(\xi,\cdot)\rangle
\equiv
\langle J^{-1} q_j^{\delta}(0,\cdot), q_{\ell}^{\delta}(0,\cdot)\rangle\,,
\qquad 1\leq j,\ell\leq 2\,.
\]

We have achieved the sought reduction since
\[
\sigma(L_\xi^\delta)\cap B_{\epsilon_0}(0)\,=\,\sigma(D_\xi^\delta)
\]
with
\[
D_{\xi}^{\delta}:=\big(\inp{\tilde{q}_j^{\delta}(\xi,\cdot)}{L_{\xi}^{\delta}q_\ell^{\delta}(\xi,\cdot)}\big)_{1\leq j,\ell\leq 2}\,.
\]

Before expanding $D_\xi^\delta$ in $(\xi,\delta)$ small, we would like to point out a few of its symmetries, inherited from those of $L_\xi^\delta$. From \eqref{defdualbase} and the Hamiltonian structure $L_\xi^\delta=J\,A_\xi^\delta$, with $J$ skew-symmetric and $A_\xi^\delta$ self-adjoint, one readily gets
\[
D_\xi^\delta
\,=\,-\frac{1}{\langle J^{-1} q_2^{\delta}(0,\cdot), q_1^{\delta}(0,\cdot)\rangle}
\begin{pmatrix}
\langle q_2^{\delta}(\xi,\cdot), A_\xi^\delta q_1^{\delta}(\xi,\cdot)\rangle&
\langle q_2^{\delta}(\xi,\cdot), A_\xi^\delta q_2^{\delta}(\xi,\cdot)\rangle\\
-\langle q_1^{\delta}(\xi,\cdot), A_\xi^\delta q_1^{\delta}(\xi,\cdot)\rangle&
-\langle q_1^{\delta}(\xi,\cdot), A_\xi^\delta q_2^{\delta}(\xi,\cdot)\rangle
\end{pmatrix}
\]
with $\langle J^{-1} q_2^{\delta}(0,\cdot), q_1^{\delta}(0,\cdot)\rangle\in\mR$,
\begin{align*}
\langle q_2^{\delta}(\xi,\cdot), A_\xi^\delta q_1^{\delta}(\xi,\cdot)\rangle
&=\overline{\langle q_1^{\delta}(\xi,\cdot), A_\xi^\delta q_2^{\delta}(\xi,\cdot)\rangle}\,,&
\langle q_j^{\delta}(\xi,\cdot), A_\xi^\delta q_j^{\delta}(\xi,\cdot)\rangle
&\in\mR\,,\quad j=1,2\,.
\end{align*}
For the sake of concreteness, let us write
\[
D_\xi^\delta\,=\,\begin{pmatrix}
a(\xi,\delta)&c(\xi,\delta)\\
b(\xi,\delta)&-\overline{a(\xi,\delta)}\end{pmatrix}\,,
\qquad b(\xi,\delta),\,c(\xi,\delta)\in\mR\,.
\]

Moreover from the evenness of $(\uu^\delta,(\ue^\delta)')$, we also have
 \begin{align}\label{prop-L}
L_{\xi}^{\delta}\mathcal{I}&=-\mathcal{I}\,L_{\xi}^{\delta}\,,&
A_{\xi}^{\delta}\mathcal{I}&=\mathcal{I}\,A_{\xi}^{\delta}\,,
 \end{align}
where
\[
(\mathcal{I} g)(x):= \diag (1, -1) \overline{g(-x)}\,.
\]
This implies that
\begin{align*}
\Pi_{\xi}^{\delta}\mathcal{I}&=-\mathcal{I}\,\Pi_{\xi}^{\delta}\,,&
\cU^{\delta}(\xi)\mathcal{I}&=\mathcal{I}\,\cU^{\delta}(\xi)\,.
\end{align*}
Therefore
\begin{align*}
q_1^{\delta}(\xi,\cdot)&\,=\,\mathcal{I}\,q_1^{\delta}(\xi,\cdot)\,,&
q_2^{\delta}(\xi,\cdot)&\,=\,-\mathcal{I}\,q_2^{\delta}(\xi,\cdot)\,,
\end{align*}
since the equality holds when $\xi=0$, thanks to the evenness properties of nearby profiles. By using that
\[
\langle\mathcal{I}f,g\rangle=\overline{\langle f,\mathcal{I}g\rangle}
\]
we deduce that $a(\xi,\delta)=-\overline{a(\xi,\delta)}$, hence
\[
a(\xi,\delta)\in i\mR\,.
\]

\subsection{Further constant-coefficient expansions}

We know $D_0^\delta$ rather explicitly. To expand $D_\xi^\delta$ in $(\xi,\delta)$ small, a significant part of missing information may be derived by expanding, as we do now, $D_\xi^0$ in $\xi$ small.

To begin with, we expand $\mathcal{U}^0(\xi)$.

\begin{prop}\label{prop-extop}
The extension operator $\mathcal{U}^0({\xi})$, defined in \eqref{def-extop}, satisfies
\begin{align}\label{exp-extop}
&\mathcal{U}^0({\xi})(g)
\stackrel{\xi\to0}{=}g
+\alpha\,\xi\,\diag(1,-1)\,\left(e^{-2i\pi \cdot}\langle e^{-2i\pi \cdot},g\rangle
-e^{2i\pi \cdot}\langle e^{2i\pi \cdot},g\rangle\right)
\nonumber\\
&+\xi^2\left(\f{1}{2}\alpha^2\,I+\beta\,\diag (1,-1)\right)
\left(e^{-2i\pi \cdot}\langle e^{-2i\pi \cdot},g\rangle
+e^{2i\pi \cdot}\langle e^{2i\pi \cdot},g\rangle\right)+\cO(\xi^3)\|g\|_{L^2(\mR/\mZ)}\,,
\end{align}
where the constants $\alpha$, $\beta$ are given by
\begin{align}\label{defalp-beta}
\alpha&:=\f{\p_\xi\omega_1(0)}{2\,\omega_1(0)},&
\beta&:=\f{(\p_\xi\omega_1(0))^2-\p_\xi^2\omega_1(0)\,\omega_1(0)}{2\,\omega_1(0)^2}\,,
\end{align}
with $\omega_1$ is as in \eqref{defomega}.
\end{prop}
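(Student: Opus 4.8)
The plan is to compute the expansion of $\mathcal{U}^0(\xi)$ directly from the defining Cauchy problem \eqref{def-extop}, $\p_\xi\mathcal{U}^0(\xi)=[\p_\xi\Pi_\xi^0,\Pi_\xi^0]\,\mathcal{U}^0(\xi)$ with $\mathcal{U}^0(0)=I$, by first expanding the spectral projector $\Pi_\xi^0$ in powers of $\xi$. Since at $\delta=0$ everything diagonalizes under Fourier series, the relevant two-dimensional characteristic space near $0$ is $\Span\{\varphi_+^0(\xi,\cdot),\varphi_-^0(\xi,\cdot)\}$ — the constant-in-$x$ modes — where from \eqref{defbasis-constant} $\varphi_\pm^0(\xi,x)=\begin{pmatrix}1\\\mp i\omega_0(\xi)\end{pmatrix}$ (the $j=0$ eigenfunctions; note however I will have to double-check the labelling, since the Jordan block at $\xi=0$ comes from the collision $\lambda_+^0(0)=\lambda_-^0(0)=0$, consistent with $\ell=2$ after relabelling, and the basis \eqref{basis-0xi} lives in $\Span\{e^{\pm2i\pi\cdot}\}$; the structure of \eqref{exp-extop} shows the projector's $\xi$-dependence indeed only couples the $e^{\pm2i\pi\cdot}$ components, so the computation is governed by the two Fourier modes adjacent to the one carrying the Jordan block). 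I would write $\Pi_\xi^0=\Pi_0^0+\xi\,\Pi^{(1)}+\xi^2\,\Pi^{(2)}+\cO(\xi^3)$ using the Riesz-integral formula \eqref{def-projector} and the resolvent expansion $(\lambda-L_\xi^0)^{-1}=(\lambda-L_0^0)^{-1}+\sum_{m\geq1}(\lambda-L_0^0)^{-1}\big((L_\xi^0-L_0^0)(\lambda-L_0^0)^{-1}\big)^m$, together with the Floquet expansion $L_\xi^0=L_0^0+i k^{(0)}\xi\,L_{[1]}^0+(ik^{(0)}\xi)^2 L_{[2]}^0$; the off-diagonal (in Fourier modes) pieces of $L_{[1]}^0,L_{[2]}^0$ acting on the central mode feed exactly the $e^{\pm2i\pi\cdot}$-mode contributions.

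Next, plug the projector expansion into the Cauchy problem and integrate order by order. Writing $\mathcal{U}^0(\xi)=I+\xi\,U^{(1)}+\xi^2 U^{(2)}+\cO(\xi^3)$, matching at order $\xi^0$ in the ODE gives $U^{(1)}=[\Pi^{(1)},\Pi_0^0]$, and at order $\xi^1$ gives $2\,U^{(2)}=2[\Pi^{(2)},\Pi_0^0]+[\p_\xi\Pi^{(1)}\text{-type terms}]+[\Pi^{(1)},\Pi_0^0]\,U^{(1)}$ — more precisely $U^{(2)}=[\Pi^{(2)},\Pi_0^0]+\tfrac12\big([\Pi^{(1)},\Pi_0^0]^2+\text{commutator corrections}\big)$, which after simplification using $\Pi_0^0\Pi^{(1)}\Pi_0^0=0$ and $(I-\Pi_0^0)\Pi^{(1)}(I-\Pi_0^0)=0$ (standard projector-derivative identities) collapses to the stated form. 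The expected shape of the answer, a rank-two operator sending $g$ to combinations of $e^{\pm2i\pi\cdot}\langle e^{\pm2i\pi\cdot},g\rangle$ with $\diag(1,-1)$ and $I$ prefactors, should drop out once one observes that $\mathcal{I}$-symmetry \eqref{prop-L} forces the odd-order term to be $\diag(1,-1)$-type and antisymmetric in the two modes, while the even-order term is symmetric. The scalar coefficients $\alpha,\beta$ are then read off from the scalar $2\times2$ reduction on $\Span\{\varphi_+^0,\varphi_-^0\}$: there $L_\xi^0$ restricted acts, after the appropriate basis identification, essentially as multiplication by $\lambda_\pm^0(\xi)=i(Vk^{(0)}\xi\pm\omega_0(\xi))$ with $\omega_0$ as in \eqref{defomega} (with the relabelling this is the $\omega_1$ appearing in the statement), and the projector derivatives are governed by $\p_\xi\omega$ and $\p_\xi^2\omega$ exactly through \eqref{defalp-beta}.

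Concretely I would exploit that on the invariant two-dimensional Fourier block the whole problem is a $2\times2$ matrix problem: let $M(\xi)=\diag(\lambda_+^{block}(\xi),\lambda_-^{block}(\xi))$ in the eigenbasis, but work instead in the "constant-coefficient ODE" basis where $M(\xi)=i V k^{(0)}\xi\,I+\omega(\xi)\,N$ with $N=\begin{pmatrix}0&-1\\1&0\end{pmatrix}$-like, so that the scalar part drops out of $\Pi_\xi$ and only $\omega(\xi)\,N$ matters; diagonalizing $N$ and expanding the normalized eigenprojectors in $\xi$ gives $\Pi^{(1)},\Pi^{(2)}$ with coefficients manifestly built from $\p_\xi\omega(0)/\omega(0)$ and $\p_\xi^2\omega(0)/\omega(0)$. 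Then translating the resulting finite-dimensional $\mathcal{U}$-expansion back to operators on $L^2(\mathbb{R}/\mathbb{Z})$ via the Fourier projections $g\mapsto e^{\pm2i\pi\cdot}\langle e^{\pm2i\pi\cdot},g\rangle$ yields \eqref{exp-extop}. The main obstacle I anticipate is bookkeeping rather than conceptual: correctly tracking which Fourier modes couple (the Jordan-block mode versus its neighbours $e^{\pm2i\pi\cdot}$), getting the normalization/duality of the eigenbasis consistent with the choice \eqref{basis-0xi}, and ensuring the $\cO(\xi^3)$ remainder is genuinely operator-bounded uniformly, which requires the resolvent $(\lambda-L_0^0)^{-1}$ to be bounded on the contour $\p B_{\epsilon_0}(0)$ away from the rest of the spectrum — guaranteed for $\epsilon_0$ small by Lemma~\ref{l:0-lemma} — so that the Neumann series for the perturbed resolvent converges and can be integrated termwise.
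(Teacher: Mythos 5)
Your overall strategy --- expand $\mathcal{U}^0(\xi)$ from the Cauchy problem as $I+\xi\,[\Pi_0',\Pi_0]+\tfrac{\xi^2}{2}\left([\Pi_0',\Pi_0]^2+[\Pi_0'',\Pi_0]\right)+\cO(\xi^3)$, compute $\Pi_\xi^0$ explicitly in Fourier via the Riesz integral, and read off $\alpha,\beta$ from the $\xi$-dependence of the rank-one eigenprojectors of the $2\times2$ Fourier symbols --- is exactly the paper's. Your formula for $U^{(2)}$ is correct once one notes that the ``commutator corrections'' you leave unspecified actually vanish, since $\p_\xi\left([\p_\xi\Pi_\xi,\Pi_\xi]\right)|_{\xi=0}=[\Pi_0'',\Pi_0]$.

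There is, however, a genuine error in your identification of the spectral structure at $\delta=0$, and it is not a mere labelling issue. The double eigenvalue $0$ of $L_0^0$ does \emph{not} come from a collision $\lambda_+^0(0)=\lambda_-^0(0)$ on the constant-in-$x$ mode: at $\xi=0$ one has $\lambda_\pm^0(0)=\pm i$, and these are simple. The relevant degeneracy is the $\ell=2$ crossing of Lemma~\ref{l:0-lemma}, namely $\lambda_-^{1}(0)=\lambda_+^{-1}(0)=0$, between the Fourier modes $j=1$ and $j=-1$. Consequently $\Pi_\xi^0$ is supported entirely on those two modes, where it acts as the rank-one matrices $\cM_-^{1}(\xi)$ and $\cM_+^{-1}(\xi)$ whose $\xi$-dependence enters through $\omega_{\pm1}(\xi)$; it is $\omega_1$, not $\omega_0$, that must feed \eqref{defalp-beta}, and the distinction matters since $\p_\xi\omega_0(0)=0$ would give $\alpha=0$, which is false. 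Moreover, your proposed mechanism --- off-diagonal-in-Fourier pieces of $L_{[1]}^0,L_{[2]}^0$ coupling a ``central'' Jordan-block mode to its neighbours $e^{\pm2i\pi\cdot}$ --- cannot operate: at $\delta=0$ all of $L_0^0$, $L_{[1]}^0$, $L_{[2]}^0$ have constant coefficients and are therefore diagonal in Fourier, so there is no inter-mode coupling whatsoever (that only appears for $\delta>0$). Finally, the $\diag(1,-1)$ structure does not require the $\mathcal{I}$-symmetry argument you float: it falls out of the explicit computation, because the scalar part $\tfrac12 I$ of $\cM_\mp^{\pm1}(\xi)$ is $\xi$-independent, so $\p_\xi\cM_\mp^{\pm1}$ and $\p_\xi^2\cM_\mp^{\pm1}$ are anti-diagonal $2\times2$ matrices, and the commutator of two anti-diagonal matrices is a traceless diagonal matrix, i.e., a multiple of $\diag(1,-1)$. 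Once the crossing is correctly placed on the modes $j=\pm1$, your ``concrete'' single-block computation is the right one and reproduces the paper's proof.
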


The proof of the proposition is elementary but a bit long so we moved it to Appendix~\ref{s:ext-xi}. Combined with \eqref{basis-0xi}, it yields
\begin{align*}
k_0\,q_1^0(\xi, x)=\left( \begin{array}{c}
     \cos (2\pi x) \\
          -\omega_1(0) \sin(2\pi x)
    \end{array}\right)
    &+ i\alpha\left( \begin{array}{c}
     -\sin (2\pi x) \\
          \omega_1(0) \cos(2\pi x)
    \end{array}\right)\xi\\
    &+\f{1}{2} \left( \begin{array}{c}
    (\alpha^2+\beta) \cos (2\pi x) \\
          (\beta-\alpha^2)\omega_1(0) \sin(2\pi x)
    \end{array}\right)\xi^2+\cO(\xi^3),
\end{align*}
\begin{align*}
2\pi\,k_0\ q_2^0(\xi, x)=\left( \begin{array}{c}
     \sin (2\pi x) \\
          \omega_1(0) \cos(2\pi x)
    \end{array}\right)
    &+ i\alpha \left( \begin{array}{c}
     \cos(2\pi x) \\
          \omega_1(0) \sin(2\pi x)
    \end{array}\right)\xi\\
    &+ \f{1}{2} \left( \begin{array}{c}
    (\alpha^2+\beta) \sin(2\pi x) \\
          (\alpha^2-\beta)\omega_1(0) \cos(2\pi x)
    \end{array}\right)\xi^2+\cO(\xi^3)\,.
\end{align*}

From a simple examination of the first coordinate, one deduces an expansion for the transition matrix $T(\xi)$
\begin{align}\label{tranofbase}
\begin{pmatrix}
q_1^0(\xi, \cdot)
&q_2^0(\xi,\cdot)\end{pmatrix}
=\begin{pmatrix}
\varphi_{-}^{1}(\xi,\cdot)
&\varphi_{+}^{-1}(\xi,\cdot)
\end{pmatrix}\,T(\xi)
\end{align}
in the form
\begin{align*}
T(\xi)&=
\begin{pmatrix}
(1-\alpha\xi+\frac12(\alpha^2+\beta)\xi^2)&0\\
0&(1+\alpha\xi+\frac12(\alpha^2+\beta)\xi^2)
\end{pmatrix}
\begin{pmatrix}
\frac{1}{2\,k_0}&-\frac{i}{4\,\pi\,k_0}\\
\frac{1}{2\,k_0}&\frac{i}{4\,\pi\,k_0}
\end{pmatrix}
+\cO(\xi^3)\,.
\end{align*}

This is sufficient to derive the sought expansion
\begin{align*}
D_{\xi}^0&= T^{-1}(\xi) \left( \begin{array}{cc}
     \lambda_{-}^{1}(\xi)&0\\
      0& \lambda_{+}^{-1}(\xi)
  \end{array}\right)T(\xi)\\
  &=\f{1}{2}\left( \begin{array}{cc}
     \lambda_{-}^{1}+ \lambda_{+}^{-1}  & -\f{1}{2i\pi}(\lambda_{+}^{-1}- \lambda_{-}^{1})\\
     -{2i\pi}(\lambda_{+}^{-1}- \lambda_{-}^{1} )&\lambda_{-}^{1}+ \lambda_{+}^{-1}
  \end{array}\right)(\xi)+\cO(\xi^3)\\
  &=\left( \begin{array}{cc}
   i( Vk_0-\omega_1'(0))\xi  & -\f{\omega_1''(0)}{4\pi }\xi^2\\[5pt]
     \pi\omega_1''(0)\xi^2 & i( Vk_0-\omega_1'(0))\xi
\end{array}\right)+\cO(\xi^3)\,.
\end{align*}
Let us observe that $\omega_1''(0)>0$ since
\[
\omega_1''(\xi)
\,=\,\frac{(\omega_1'(\xi))^2}{\omega_1(\xi)}+\omega_1(\xi)\,P'(1)k_0^2>0\,.
\]

\subsection{Expansion of $D_{\xi}^{\delta}$}

We now turn to the full expansion of $D_\xi^\delta$ in $(\xi,\delta)$ small. We recall that
\[
D_\xi^\delta\,=\,a(\xi,\delta)\,I
+\begin{pmatrix}0&c(\xi,\delta)\\b(\xi,\delta)&0\end{pmatrix}\,,\qquad
a(\xi,\delta)\in i\mR\,,\ b(\xi,\delta)\in\mR,\ c(\xi,\delta)\in\mR
\]
so that we want to prove that $b(\xi,\delta)c(\xi,\delta)<0$ when $(\xi,\delta)$ is small.

We already know that for some real $b_1(\delta)$, $c_1(\delta)$,
\begin{align*}
b(\xi,\delta)&\stackrel{(\xi,\delta)\to(0,0)}{=}
b_{1}(\delta) \xi+b_{20}\xi^2+\cO\left(|\xi|^2(|\xi|+\delta)\right)\\
c(\xi,\delta)&\stackrel{(\xi,\delta)\to(0,0)}{=}
c_{1}(\delta)\xi  +c_{20}\xi^2+ c_{02}\delta^2
+ \cO\big(|\xi|^2(|\xi|+\delta)+\delta^3\big)
\end{align*}
with $b_{20}>0$, $c_{20}<0$, $c_{02}<0$. We prove now that
\begin{align*}
b_1({\delta})&\equiv0\,,&
c_1(\delta)&\stackrel{\delta\to0}{=}\cO(\delta^2)\,,
\end{align*}
which is sufficient to conclude the proof.

The vanishing of $b_1$ is an extremely robust property, directly related to the rigorous analysis of slow modulation theory. It follows from computations at the beginning of  Section~\ref{s:reduc-mod}. Indeed since $q_1^\delta(0,\cdot)$ lies in the kernel of $L_0^\delta$ and $\tilde{q}_2^{\delta}(0, \cdot)$ lies in the kernel of $(L_0^\delta)^*$,
\[
b_1(\delta)\,=\,
\langle \tilde{q}_2^{\delta}(0, \cdot), ik^{(\delta)} L_{[1]}^{\delta} q_1^{\delta}(0,\cdot)\rangle\,.
\]
Moreover we already know that
\begin{align*}
L_{[1]}^{\delta}q_1^{\delta}(0,\cdot)
+\frac{k^{(\delta)}}{\p_{V}k^{(\delta)}} q_1^{\delta}(0,\cdot)
\in \Ran(L_0^\delta)
\end{align*}
hence $L_{[1]}^{\delta} q_1^{\delta}(0,\cdot)$ is orthogonal to $\tilde{q}_2^{\delta}(0, \cdot)$. This proves the claim on $b_1$.

Let us now prove that $c_{1}(\delta)=O(\delta^2)$. We already know that $L_0^{\delta}q_2^{\delta}(0,\cdot)=\cO(\delta^2)$ and  $(L_0^{\delta})^{*}\tilde{q}_1^{\delta}(0, \cdot)=\cO(\delta^2)$ so that
\[
c_1(\delta)\,=\,
\langle \tilde{q}_1^{\delta}(0, \cdot), ik^{(\delta)} L_{[1]}^{\delta} q_2^{\delta}(0,\cdot)\rangle+\cO(\delta^2)\,.
\]
Now, since both $c_1(\delta)$ and $\langle \tilde{q}_1^{\delta}(0, \cdot),L_{[1]}^{\delta} q_2^{\delta}(0,\cdot)\rangle$ are real, this implies the claim on $c_1$.

This achieves the proof of Theorem~\ref{th4}.

\begin{rmk}\label{rk:BF}
When $k_2^V<0$, one has $c_{02}>0$ and concludes to instability with eigenvalues of positive real part of size $\cO(\delta)$.
\end{rmk}

\section{Non modulational spectral instability}\label{sec5}

In the present section we prove Theorem~\ref{main} by considering the $\ell=3$ crossings of Lemma~\ref{l:0-lemma}.

We shall also complement Theorem~\ref{main} with the following proposition that proves that 
the instability index $\Gamma$ vanishes at most a finite number of times.

\begin{prop}\label{th5}
In the case $P(\cdot)=T(\cdot)^\gamma$ with $T>0$ and $\gamma\in[1,+\infty[$, the instability index from Theorem~\ref{main} vanishes at most a finite number of times.
\end{prop}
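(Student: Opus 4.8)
The overall plan is to combine analyticity of $\Gamma$ with a control of its behaviour at the two ends of the parameter interval. Since $P(\rho)=T\,\rho^\gamma$ is analytic near $\rho=1$ and $P'(1)=T\gamma$, Theorem~\ref{main} already gives that $\Gamma$ is real-analytic on $(\sqrt{T\gamma},+\infty)$, hence that $\Gamma^{-1}(\{0\})$ is either all of $(\sqrt{T\gamma},+\infty)$ or discrete. It therefore suffices to prove that $\Gamma$ is not identically zero and that its zero set accumulates neither at $V=\sqrt{T\gamma}$ nor at $V=+\infty$; granting these, $\overline{\Gamma^{-1}(\{0\})}$ is a discrete compact subset of the open interval, hence finite.

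Both facts will be read off from the leading-order asymptotics of $\Gamma(V)$ as $V\to\sqrt{T\gamma}^+$ and as $V\to+\infty$, using the explicit --- if unwieldy --- formula of Proposition~\ref{prop-rmc} specialized to the power law. For such laws all of its ingredients are algebraic in $V$: the $\ell=3$ crossing frequencies produced by Lemma~\ref{l:0-lemma}, which one checks to be $\zeta_1(V)=(V^2-T\gamma)^{-1/2}\big(\tfrac{3}{2}+\tfrac{\sqrt{5}\,V}{2\sqrt{T\gamma}}\big)$ and $\zeta_2(V)=\zeta_1(V)-3\,(V^2-T\gamma)^{-1/2}$; the constant-coefficient eigenprojections built from \eqref{eigenvalue}--\eqref{defomega}; and the Fourier coefficients $\ue_1^V,\ue_2^V,\ue_3^V$ of the wave profile, which by the recursion of Section~\ref{sec2} are trigonometric polynomials whose coefficients are rational in $V$ and $\sqrt{V^2-T\gamma}$ (see \eqref{ued1}, \eqref{ued2}, \eqref{ued3}). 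As $V\to+\infty$ one has $\zeta_1,\zeta_2\to\tfrac{\sqrt{5}}{2\sqrt{T\gamma}}$ with $\zeta_1-\zeta_2=\cO(1/V)$, so this limit is comparatively tame; as $V\to\sqrt{T\gamma}^+$ one has $k^{(0)}\to+\infty$ and $\zeta_1\to+\infty$, $\zeta_2\to-\infty$, so the crossing there sits at arbitrarily high Fourier modes. In each regime I expect an expansion $\Gamma(V)=c_\star\,\mu_\star^{\,p_\star}\,(1+o(1))$, with $\mu_\star$ the natural small, resp. large, parameter ($V-\sqrt{T\gamma}$, resp. $1/V$), $p_\star$ an explicit exponent (possibly negative, i.e. a blow-up), and $c_\star=c_\star(\gamma,T)$ a coefficient to be checked nonzero for every $\gamma\in[1,+\infty)$.

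Once such expansions are in hand the conclusion is immediate: nonvanishing of the leading coefficient at $V=+\infty$ gives $\Gamma\not\equiv0$ and rules out zeros for large $V$, while nonvanishing at $V=\sqrt{T\gamma}$ rules out zeros near the sonic threshold; analyticity then bounds the number of zeros on the intermediate compact range. (Alternatively, one could note straight from the structure of the formula that $\Gamma$ is an algebraic function of $V$, so that a nonzero polynomial $Q$ satisfies $Q(V,\Gamma(V))\equiv0$; after removing powers of its second variable this already confines $\Gamma^{-1}(\{0\})$ to the zero set of a one-variable polynomial --- but establishing $\Gamma\not\equiv0$ still requires evaluating $\Gamma$ somewhere, e.g. asymptotically.)

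The main obstacle is precisely the asymptotic computation, and within it the verification that the leading coefficient $c_\star(\gamma,T)$ never vanishes on $\gamma\in[1,+\infty)$ --- a polynomial-in-$\gamma$ (non)vanishing check in the spirit of, but heavier than, the one performed for $k_2^V$ around \eqref{signa2}. I expect the sonic endpoint $V\to\sqrt{T\gamma}^+$ to be the delicate one, because the high Fourier modes entering the crossing make many terms of the formula of Proposition~\ref{prop-rmc} compete at the same order; should the leading coefficient happen to vanish at isolated exponents $\gamma_*$, one would conclude for those values by instead invoking the asymptotics at $V=+\infty$, or by pushing the expansion to the next order.
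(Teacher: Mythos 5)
Your skeleton is the paper's: analyticity of $\Gamma$ on $(\sqrt{P'(1)},+\infty)$, plus non-accumulation of zeros at both ends of the interval. At $V\to+\infty$ you propose exactly what the paper does. At the sonic endpoint, however, the paper does not perform the expansion you single out as ``the delicate one'': it simply observes that $\Gamma$ is a meromorphic function of $w=\sqrt{V^2-P'(1)}$ near $w=0$, so that once $\Gamma\not\equiv 0$ is known its zeros cannot accumulate at $w=0$ (a nonzero meromorphic function has a Laurent expansion with nonzero leading term). This is essentially the soft ``algebraic/meromorphic function'' observation you relegate to a parenthesis, and it entirely defuses the high-Fourier-mode competition you worry about; no leading coefficient needs to be evaluated there. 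Note also that your fallback at the sonic endpoint --- ``invoking the asymptotics at $V=+\infty$'' should the leading coefficient there vanish --- is not a fallback: behaviour at infinity says nothing about accumulation of zeros at $\sqrt{P'(1)}$.

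The genuine gap is that the decisive content of the proposition is precisely the non-vanishing check you defer, and it is where essentially all of the paper's work lies (Appendix~\ref{s:computations}). The paper proves (Lemma~\ref{dl-c-s}) that $\Gamma\stackrel{V\to\infty}{=}f(\gamma)+\cO(V^{-1})$ with $f(\gamma)=-\frac{1}{6144}(65\gamma^3+315\gamma^2+115\gamma-135)$, and then checks $f(1)<0$, $f'(1)<0$, $f''(1)<0$, whence $f<0$ on $[1,+\infty)$. Without this computation (or some other evaluation showing $\Gamma\not\equiv 0$ and bounded away from $0$ for large $V$), your argument establishes nothing: analyticity alone is compatible with $\Gamma\equiv 0$, which Theorem~\ref{main} explicitly allows. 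So the proposal is a correct plan whose essential step is missing; to complete it you must carry out the $V\to\infty$ expansion of the formula of Proposition~\ref{prop-rmc} and verify that the resulting cubic in $\gamma$ has no root in $[1,+\infty)$.
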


Let us first recall that the two crossings $\ell=3$ crossings of Lemma~\ref{l:0-lemma} are conjugate one to the other through real symmetry. We may thus focus on $(j_{+,3},j'_{+,3},\xi_{+,3})$. For the sake of concision, in the present section, we simply set
\begin{align*}
j&:=j_{+,3}\,,&
j'&:=j_{+,3}'\,,&
\xi_0&:=\xi_{+,3}\,,&
\lambda_0&:=\lambda_{+}^{j'}(\xi_0)=\lambda_{-}^{j}(\xi_0)\,.
\end{align*}

For some values of $V$, it happens that $\xi_0=-\pi$. With our normalization of the Floquet parameter $\xi$ as belonging to $[-\pi,\pi)$, perturbing in $\xi$ then requires to consider both $\xi$ near $-\pi$ and $\xi$ near $\pi$. This introduces extra notational complexity but no significant mathematical difference. Therefore for the sake of simplicity we restrict our proof to the case $\xi_0\neq -\pi$.

\subsection{Finite-dimensional reduction}

Our proof of Theorem~\ref{main} follows very closely its sketch in the introduction. It also shares many similarities with the proof of Theorem~\ref{th4}.

The main strategical difference is that we build our basis by extending in $\delta$ an explicit choice made for $\delta=0$. An obvious reason for this discrepancy is that here the $\xi=\xi_0$ analysis has no evident structure when $\delta>0$. The main departure in computations is that here we need expansions that are higher-order with respect to $\delta$.

To begin with, we thus set
\begin{align*}
q_{+}^{0}(\xi,\cdot)&:=\varphi_{+}^{j'}(\xi,\cdot)\,,&
q_{-}^{0}(\xi,\cdot)&:=\varphi_{-}^{j}(\xi,\cdot)\,,
\end{align*}
with $\varphi_{\pm}^{l}$ being defined in \eqref{defbasis-constant}.
Then we extend those through
\begin{align*}
q_{+}^{\delta}(\xi,\cdot)&:=\cU^\xi(\delta)q_{+}^{0}(\xi,\cdot)\,,&
q_{-}^{\delta}(\xi,\cdot)&:=\cU^\xi(\delta)q_{+}^{0}(\xi,\cdot)\,.
\end{align*}
where the extension operator $\cU^\xi(\delta)$ is obtained by solving
\begin{align}\label{def-extop-delta}
\p_\delta\mathcal{U}^\xi(\delta)
&=[\p_\delta\Pi_{\xi}^{\delta},\Pi_{\xi}^{\delta}]\ \mathcal{U}^\xi(\delta)\,,
&\mathcal{U}^{\xi}(0)=I\, ,
\end{align}
from the spectral projector
\[
\Pi_{\xi}^{\delta}:=\f{1}{2i \pi} \oint_{\p B_{\epsilon_1}(\lambda_0)}
(\lambda I-L_{\xi}^{\delta})^{-1}\, \d \lambda
\]
defined with $\epsilon_1>0$ sufficiently small, $0\leq \delta \leq\epsilon_1$, $|\xi-\xi_0|\leq \epsilon_1$.

Let us also determine a dual basis spanning the sum of characteristic spaces of $(L_\xi^\delta)^*$ associated with eigenvalues in $B_{\epsilon_1}(\overline{\lambda_0})$. Since $j\neq j'$,
\begin{align*}
\langle J^{-1}\,q_{+}^{0}(\xi,\cdot),q_{-}^{0}(\xi,\cdot)\rangle&=0\,,&
\langle J^{-1}\,q_{-}^{0}(\xi,\cdot),q_{+}^{0}(\xi,\cdot)\rangle&=0\,,&
\end{align*}
whereas direct computations give
\begin{align}\label{Krein}
\langle J^{-1}q_{+}^{0}(\xi,\cdot), q_{+}^{0}(\xi,\cdot)\rangle&=2i\omega_{j'}(\xi)\,,&
\langle J^{-1}q_{-}^{0}(\xi,\cdot), q_{-}^{0}(\xi,\cdot)\rangle&=-2 i  \omega_{j}(\xi)\,.
\end{align}
Therefore
\begin{align*}
\displaystyle
\tilde{q}_{+}^{0}(\xi, \cdot)&=\f{i\,J^{-1}q_{+}^{0}(\xi, \cdot)}{2\omega_{j'}(\xi)}\,,&
\tilde{q}_{-}^{0}(\xi, \cdot)=-\f{i\,J^{-1}q_{-}^{0}(\xi, \cdot)}{2\omega_{j}(\xi)}.
\end{align*}
Then one may extend these formula with an extension operator associated with $(L_\xi^\delta)^*$ and use Hamiltonian symmetry to check that still
\begin{align*}
\displaystyle
\tilde{q}_{+}^{\delta}(\xi, \cdot)&=\f{i\,J^{-1}q_{+}^{\delta}(\xi, \cdot)}{2\omega_{j'}(\xi)}\,,&
\tilde{q}_{-}^{\delta}(\xi, \cdot)=-\f{i\,J^{-1}q_{-}^{\delta}(\xi, \cdot)}{2\omega_{j}(\xi)}.
\end{align*}

With this in hands, we may set
\begin{align*}
 D_{\xi}^{\delta}= \left( \begin{array}{cc}
\langle \tilde{q}_{+}^{\delta}(\xi, \cdot), L_{\xi}^{\delta}  {q}_{+}^{\delta}(\xi, \cdot)\rangle & \langle \tilde{q}_{+}^{\delta}(\xi, \cdot), L_{\xi}^{\delta}  {q}_{-}^{\delta}(\xi, \cdot)\rangle\\[3pt]
 \langle \tilde{q}_{-}^{\delta}(\xi, \cdot), L_{\xi}^{\delta}  {q}_{+}^{\delta}(\xi, \cdot)\rangle & \langle \tilde{q}_{-}^{\delta}(\xi, \cdot), L_{\xi}^{\delta}  q_{-}^{\delta}(\xi, \cdot)\rangle
    \end{array}\right).
\end{align*}
so that
\[
\sigma(L_\xi^\delta)\cap B_{\epsilon_1}(\lambda_0)=\sigma(D_\xi^\delta),
\]
when $0\leq \delta \leq\epsilon_1$, $|\xi-\xi_0|\leq \epsilon_1$.

\begin{rmk}\label{rk:Krein}
Our present normalization differs from the one used in the discussion of the introduction, by the fact that that we do not scale $(q_{+}^{\delta}(\xi, \cdot),q_{-}^{\delta}(\xi, \cdot))$ to force symmetry between the direct and dual problems. A basis as in the introduction is obtained by setting
\begin{align*}
 q_{1}^{\delta}(\xi, \cdot)&=\frac{q_{+}^{\delta}(\xi, \cdot)}{\sqrt{2\omega_{j'}(\xi)}}\,,&
q_{2}^{\delta}(\xi, \cdot)&=\frac{q_{-}^{\delta}(\xi, \cdot)}{\sqrt{2\omega_{j}(\xi)}}\,,
\end{align*}
so that
\begin{align*}
\tilde{q}_{1}^{\delta}(\xi, \cdot)&=iJ^{-1}{q}_{1}^{\delta}(\xi, \cdot)\,,&
\tilde{q}_{2}^{\delta}(\xi, \cdot)&=-iJ^{-1}{q}_{2}^{\delta}(\xi, \cdot)\,.
\end{align*}
Consistently the index $\Gamma$ derived here differs from the one of the introductory exposition by an immaterial nonzero scaling factor. Let us also point out that it is \eqref{Krein} that shows that the two eigenvalues colliding at $\xi=\xi_0$ have opposite Krein signature, which prevents to enforce either $(\tilde{q}_{1}^{\delta}(\xi, \cdot),\tilde{q}_{2}^{\delta}(\xi, \cdot))=(iJ^{-1}{q}_{1}^{\delta}(\xi, \cdot),iJ^{-1}{q}_{2}^{\delta}(\xi, \cdot))$ or $(\tilde{q}_{1}^{\delta}(\xi, \cdot),\tilde{q}_{2}^{\delta}(\xi, \cdot))=(-iJ^{-1}{q}_{1}^{\delta}(\xi, \cdot),-iJ^{-1}{q}_{2}^{\delta}(\xi, \cdot))$, which would lead to
\begin{align*}
\textrm{either }&&
 D_{\xi}^{\delta}&= i\begin{pmatrix}
\langle q_{1}^{\delta}(\xi, \cdot), A_{\xi}^{\delta}  q_{1}^{\delta}(\xi, \cdot)\rangle & \langle q_{1}^{\delta}(\xi, \cdot), A_{\xi}^{\delta}  q_{2}^{\delta}(\xi, \cdot)\rangle\\[3pt]
 \langle q_{2}^{\delta}(\xi, \cdot), A_{\xi}^{\delta}  q_{1}^{\delta}(\xi, \cdot)\rangle & \langle q_{2}^{\delta}(\xi, \cdot), A_{\xi}^{\delta}  q_{2}^{\delta}(\xi, \cdot)\rangle
    \end{pmatrix}\,,\\
    \textrm{or }&&
     D_{\xi}^{\delta}&= -i\begin{pmatrix}
\langle q_{1}^{\delta}(\xi, \cdot), A_{\xi}^{\delta}  q_{1}^{\delta}(\xi, \cdot)\rangle & \langle q_{1}^{\delta}(\xi, \cdot), A_{\xi}^{\delta}  q_{2}^{\delta}(\xi, \cdot)\rangle\\[3pt]
 \langle q_{2}^{\delta}(\xi, \cdot), A_{\xi}^{\delta}  q_{1}^{\delta}(\xi, \cdot)\rangle & \langle q_{2}^{\delta}(\xi, \cdot), A_{\xi}^{\delta}  q_{2}^{\delta}(\xi, \cdot)\rangle
    \end{pmatrix}\,.
\end{align*}
We stress that the computation \eqref{Krein} holds for any of the crossings in Lemma~\ref{l:0-lemma}.
\end{rmk}

By using the Hamiltonian structure, one gets that $D_\xi^\delta$ takes the following form
\[
D_{\xi}^{\delta}
=\frac{i}{2}\begin{pmatrix}
\dfrac{b_+(\xi,\delta)}{\omega_{j'}(\xi)}&
\dfrac{a(\xi,\delta)}{\omega_{j'}(\xi)}\\[1em]
-\dfrac{\overline{a(\xi,\delta)}}{\omega_{j}(\xi)}&
-\dfrac{b_-(\xi,\delta)}{\omega_{j}(\xi)}
\end{pmatrix}
\]
with
\begin{align*}
b_+(\xi,\delta)&=\langle q_{+}^{\delta}(\xi, \cdot),
A_{\xi}^{\delta}  q_{+}^{\delta}(\xi, \cdot)\rangle\in\mR\,,&
b_-(\xi,\delta)&=\langle q_{-}^{\delta}(\xi, \cdot),
A_{\xi}^{\delta}  q_{-}^{\delta}(\xi, \cdot)\rangle\in\mR\,,\\
\end{align*}
and
\begin{align*}
a(\xi,\delta)&=\langle q_{+}^{\delta}(\xi, \cdot),
A_{\xi}^{\delta}  q_{-}^{\delta}(\xi, \cdot)\rangle
\,=\,\overline{\langle q_{-}^{\delta}(\xi, \cdot),
A_{\xi}^{\delta}  q_{+}^{\delta}(\xi, \cdot)\rangle}\,.
\end{align*}
Eigenvalues of $D_\xi^\delta$ are given by
\[
\frac{i}{2}
\left(\frac12\left(\frac{b_+(\xi,\delta)}{\omega_{j'}(\xi)}
-\frac{b_-(\xi,\delta)}{\omega_{j}(\xi)}\right)
\pm\sqrt{
\frac14\left(\frac{b_+(\xi,\delta)}{\omega_{j'}(\xi)}
+\frac{b_-(\xi,\delta)}{\omega_{j}(\xi)}\right)^2
-\frac{|a(\xi,\delta)|^2}{\omega_{j'}(\xi)\omega_{j}(\xi)}
}\right)\,.
\]

Note that moreover by design,
\[
D_\xi^0\,=\,
\begin{pmatrix}
\lambda_+^{j'}(\xi)&0\\
0&\lambda_-^{j}(\xi)
\end{pmatrix}\,.
\]
In particular
\begin{align*}
\p_\xi\left(\frac{b_+(\cdot,0)}{2\omega_{j'}}+\frac{b_-(\cdot,0)}{2\omega_{j}}\right)(\xi_0)
&\,=\,\frac{1}{i}\,\p_\xi(\lambda_+^{j'}-\lambda_-^j)(\xi_0)
\,=\,\p_\xi(\omega_{j'}+\omega_{j})(\xi_0)>0
\end{align*}
where the sign information stems from the fact that $\zeta\mapsto \sqrt{1+P'(1)\,\zeta^2}$ is even and strictly convex and $|j'+\xi_0/(2\pi)|<j+\xi_0/(2\pi)$ since $j'=j-3$ and $j+\xi_0/2\pi>3/2$. Therefore there exists a smooth $\Xi$ defined on a neighborhood of $0$ in $\mR^+$ such that $\Xi(0)=\xi_0$ and, for any small $\delta$,
\[
\left(\frac{b_+(\cdot,\delta)}{2\omega_{j'}}+\frac{b_-(\cdot,\delta)}{2\omega_{j}}\right)(\Xi(\delta))
\,=\,0\,.
\]
To conclude the proof it is thus sufficient to find a $\Gamma$ such that
\begin{align}\label{eq:main-aux}
a(\Xi(\delta),\delta)
\,\stackrel{\delta\to 0}{=}\,\Gamma\,\delta^3+\cO(\delta^4)\,.
\end{align}

\begin{rmk}\label{rk:ell}
The above argument may be extended to any of the crossings of Lemma~\ref{l:0-lemma} with $\ell\geq3$ since $j_{+,\ell}+\xi_{+,\ell}/(2\pi)>\ell/2$. It fails however for the crossing at the origin since $j_{2}+\xi_{2}/(2\pi)=1$. Since the remaining part of the argument applies to any crossing, a statement similar to Theorem~\ref{main} may be obtained for any crossing far from the origin. In contrast, as seen in Section 4, the analysis of the $\ell=2$ crossing required a different analysis, with instability decided not by a non-vanishing condition (which is easily satisfied) but by a sign condition, here by the sign of $k_2^V$.
\end{rmk}

The sought \eqref{eq:main-aux} is derived from the fact that for any $m\in\mN$
\[
\p_\delta^m(\ue,\uu)^0\in\Span\left(\left\{\,e^{2\pi\,p\,(\cdot)\,}\,X\,;\,|p|\leq m\,,\,
X\in\mC^2\,\right\}\right)
\]
and $j-j'=3$. To be more concrete, we introduce notation $M^m_p:\mC^2\to\mC^2$ defined by
\[
M^m_p\,(X):=\langle e^{2\pi\,(m+p)\,(\cdot)};M(e^{2\pi\,m\,(\cdot)}\,X)\rangle
\]
for any operator $M$ on $L^2(\mR/\mZ)^2$.  The above piece of information on profiles readily yields
\begin{align*}
(\p_\delta^m L_\xi^0)^p_r&\equiv 0\,,&
(\p_\delta^m \Pi_\xi^0)^p_r&\equiv 0\,,&
\textrm{when }|r|>m\,.
\end{align*}
Then, by expanding in $\delta$, for $\sigma\in\{+,-\}$,
\begin{align*}
q_\sigma^\delta(\xi,\cdot)&\,=\,\Pi_\xi^\delta(q_\sigma^\delta(\xi,\cdot))\,,&
L_\xi^\delta q_\sigma^\delta(\xi,\cdot)&\,=\,\Pi_\xi^\delta(L_\xi^\delta q_\sigma^\delta(\xi,\cdot))\,,&
\end{align*}
we derive recursively that for any $\xi$ and any $m$
\begin{align*}
\p_\delta ^m q_+^0(\xi,\cdot),\
\p_\delta^m(L_\xi^\delta q_+^\delta(\xi,\cdot))_{|\delta=0}
&\in \Span\left(\left\{\,e^{2\pi\,(j'+p)\,(\cdot)\,}\,X\,;\,|p|\leq m\,,\,
X\in\mC^2\,\right\}\right)\,,\\
\p_\delta ^m q_-^0(\xi,\cdot),\
\p_\delta^m(L_\xi^\delta q_-^\delta(\xi,\cdot))_{|\delta=0}
&\in \Span\left(\left\{\,e^{2\pi\,(j+p)\,(\cdot)\,}\,X\,;\,|p|\leq m\,,\,
X\in\mC^2\,\right\}\right)\,.
\end{align*}
Since $j=j'-3$, by orthogonality of trigonometric monomials, this implies that for any $\xi$,
\begin{align*}
\p_\delta^m a(\xi,0)&\,=\,0\,,&
\textrm{when }m\in\{0,1,2\}\,.
\end{align*}
This concludes the proof of \eqref{eq:main-aux} with $\Gamma:=\p_\delta^3 a(\xi_0,0)/6$ (since $\Xi(\delta)=\xi_0+\cO(\delta)$).

It also concludes the proof of Theorem~\ref{main}. Yet the value of Theorem~\ref{main} hinges on the fact that most of the time $\Gamma$ is nonzero. To support this claim, we provide a relatively explicit formula for $\Gamma$ and use it to prove Proposition~\ref{th5}.

\subsection{Computation of $\Gamma$}

The computation of $\Gamma$ is quite cumbersome so that even small notational gains are helpful. For this reason, we set
\begin{align*}
L^{[m]}&:=\frac{1}{m!}\p_\delta^m L_{\xi_0}^0\,,&
\cU^{[m]}&:=\frac{1}{m!}\p_\delta^m \cU^{\xi_0}(0)\,,&
V_+&:=\begin{pmatrix}1\\-\,i\omega_{j'}(\xi_0)\end{pmatrix}\,,&
V_-&:=\begin{pmatrix}1\\i\omega_{j}(\xi_0)\end{pmatrix}\,,
\end{align*}
\begin{align*}
    \cM_{+}&:=  \cM_{+}^{j'}(\xi_0)
    \,=\,\f{1}{2}\left(\begin{array}{cc}
       1 & -\f{1}{i\,\omega_{j'}(\xi_0)} \\[5pt]
       - {i\,\omega_{j'}(\xi_0)}  &  1
    \end{array}\right)\,,\\
        \cM_{-}&:=  \cM_{-}^{j}(\xi_0)
    \,=\,\f{1}{2}\left(\begin{array}{cc}
       1 & \f{1}{i\,\omega_j(\xi_0)} \\[5pt]
       i\,\omega_j(\xi_0)  &  1
    \end{array}\right)\,,
\end{align*}
and, for $m\notin\{j,j'\}$,
\begin{align*}
G_m&:=(\lambda_0\,I-\Sigma^L_0(k_0\,(2\pi\,m+\xi_0)))^{-1}\,\,.
\end{align*}

With this in hands, a first expression of $\Gamma$ is
\begin{align*}
\Gamma&=\big\langle J(\cU^{[3]})_{3}^{j'}V_{+}, (L^{[0]})_0^j V_{-}^j\big\rangle
+\big\langle J (\cU^{[2]})_{2}^{j'}V_{+},  \big((L^{[0]})_0^{j-1} (\cU^{[1]})_{-1}^{j}+(L^{[1]})_{-1}^{j} \big)V_{-}\big\rangle,\\
&\quad+ \big\langle J (\cU^{[1]})_{1}^{j'}V_{+},  \big((L^{[0]})_0^{j-2} (\cU^{[2]})_{-2}^{j}+ (L^{[1]})_{-1}^{j-1}(\cU^{[1]})_{-1}^{j}+  (L^{[2]})_{-2}^{j} \big)V_{-}\big\rangle, \\
&\quad+\big\langle J V_{+},  \big((L^{[0]})_0^{j-3} (\cU^{[3]})_{-3}^{j}+ (L^{[1]})_{-1}^{j-2}(\cU^{[2]})_{-2}^{j}+  (L^{[2]})_{-2}^{j-1}(\cU^{[1]})_{-1}^{j}+ (L^{[3]})_{-3}^{j} \big)V_{-}\big\rangle.
\end{align*}
with $\langle\,\cdot\,,\,\cdot\,\rangle$ denoting the canonical scalar product on $\mC^2$ skew-linear in its first component.

The first stage is to obtain explicit expressions for $(\cU^{[m]})^{j}_{-m}$ and $(\cU^{[m]})^{j'}_{m}$, $m\in\{1,2,3\}$.

\begin{prop}\label{prop:ext-delta}
One has
\begin{align*}
(\cU^{[1]})_{-1}^j&=G_{j-1}(L^{[1]})_{-1}^{j}\cM_{-}\,,&
(\cU^{[2]})_{-2}^j&=B_{-}\cM_{-}\,,\\
(\cU^{[1]})_{1}^{j'}&=G_{j'+1}(L^{[1]})_{1}^{j'}\cM_{+}\,,&
(\cU^{[2]})_{2}^{j'}&=B_{+}\cM_{+}\,,
\end{align*}
with
\begin{align*}
B_{-}&:= G_{j-2}(L^{[2]})_{-2}^j
+G_{j-2}(L^{[1]})_{-1}^{j-1}G_{j-1}(L^{[1]})_{-1}^{j}, \\
B_{+}&:= G_{j'+2} (L^{[2]})_{2}^{j'}
+G_{j'+2}(L^{[1]})_{1}^{j'+1}G_{j'+1}(L^{[1]})_{1}^{j'},\\
\end{align*}
and
\begin{align*}
\big(\cU^{[3]}-\f{1}{6}\big[\p_\delta^3\Pi_{\xi_0}^0, \Pi_{\xi_0}^0\big]\big)_{-3}^j
&=-\f{1}{3}\cM_{+} C_{-}G_{j-1}(L^{[1]})_{-1}^{j}\cM_{-}
-\f{2}{3}\cM_{+}(L^{[1]})_{-1}^{j-2}G_{j-2} B_{-}\cM_{-}\,,\\
\big(\cU^{[3]}-\f{1}{6}\big[\p_\delta^3\Pi_{\xi_0}^0, \Pi_{\xi_0}^0\big]\big)_{+3}^{j'}
&= -\f{1}{3} \cM_{-} C_{+}G_{j'+1}(L^{[1]})_{1}^{j'}\cM_{+}
-\f{2}{3}  \cM_{-} (L^{[1]})_{1}^{j'+2} G_{j'+2} B_{+}\cM_{+}\,,
\end{align*}
where
\begin{align*}
C_{-}
&=(L^{[2]})_{-2}^{j-1}G_{j-1}
+(L^{[1]})_{-1}^{j-2} G_{j-2}(L^{[1]})_{-1}^{j-1}G_{j-1}\,,\\
C_{+}
&=(L^{[2]})_{2}^{j'+1}G_{j'+1}
+ (L^{[1]})_{1}^{j'+2} G_{j'+2}(L^{[1]})_{1}^{j'+1} G_{j'+1}\,.
\end{align*}
\end{prop}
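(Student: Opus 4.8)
The plan is to read every Fourier block of $\cU^{[m]}$ off Kato's perturbation series for the spectral projector, exploiting two structural facts. First, the profile expansion of Section~\ref{sec2} forces the matrix symbol of each $L^{[m]}$ to have coefficients that are trigonometric polynomials of degree at most $m$, so that $(L^{[m]})^p_r=0$ whenever $|r|>m$; hence in any composition only ``staircase'' products can join two Fourier modes lying the maximal distance apart. Second, $L^{[0]}=L_{\xi_0}^0$ is block-diagonal in the Fourier modes, and $(\lambda I-L^{[0]})^{-1}$ acts at mode $m$ as $(\lambda I-\Sigma^L_0(k_0(2\pi m+\xi_0)))^{-1}$, which is holomorphic at $\lambda_0$ with value $G_m$ for $m\notin\{j,j'\}$, and has a simple pole at $\lambda_0$ with residue $\cM_-$ for $m=j$ and $\cM_+$ for $m=j'$ (the residue being the spectral projector of the symbol onto its $\lambda_0$-eigenline). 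Here one uses that $j'=j-3$ and $\xi_0\neq-\pi$, so that the intermediate modes $j-1,j-2$ (resp.\ $j'+1,j'+2$) are none of $j,j'$; this is precisely why the statement is proved at an $\ell=3$ crossing away from $-\pi$.

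I would then convert the Cauchy problem \eqref{def-extop-delta} into an algebraic recursion. Writing $\Pi^{[m]}:=\tfrac1{m!}\p_\delta^m\Pi_{\xi_0}^0$ and applying $\cU^{[m]}$ to $q_+^0=\varphi_+^{j'}(\xi_0,\cdot)$ and $q_-^0=\varphi_-^j(\xi_0,\cdot)$, one gets $\cU^{[1]}q_\pm^0=\Pi^{[1]}q_\pm^0$, $\cU^{[2]}q_\pm^0=\bigl(\Pi^{[2]}+\tfrac12(\Pi^{[1]})^2\bigr)q_\pm^0$, and $\cU^{[3]}q_\pm^0=\bigl(\Pi^{[3]}+\tfrac23\Pi^{[2]}\Pi^{[1]}+\tfrac13\Pi^{[1]}\Pi^{[2]}+\tfrac16(\Pi^{[1]})^3\bigr)q_\pm^0$. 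Since $\tfrac16[\p_\delta^3\Pi_{\xi_0}^0,\Pi_{\xi_0}^0]=[\Pi^{[3]},\Pi_{\xi_0}^0]$ and $q_\pm^0\in\Ran\Pi_{\xi_0}^0$, this yields
\[
\bigl(\cU^{[3]}-\tfrac16[\p_\delta^3\Pi_{\xi_0}^0,\Pi_{\xi_0}^0]\bigr)q_\pm^0=\bigl(\tfrac23\Pi^{[2]}\Pi^{[1]}+\tfrac13\Pi^{[1]}\Pi^{[2]}+\tfrac16(\Pi^{[1]})^3\bigr)q_\pm^0+\Pi_{\xi_0}^0\Pi^{[3]}q_\pm^0\,,
\]
which is the reason for the subtraction in the statement: it deletes the one term, $[\Pi^{[3]},\Pi_{\xi_0}^0]$, whose extreme block would multiply the poles of $(\lambda I-L^{[0]})^{-1}$ at \emph{both} resonant modes and so bring in $L^{[3]}$ and the regular parts of $(\lambda I-L^{[0]})^{-1}$ at $j,j'$; what remains involves $\Pi^{[3]}$ only through $\Pi_{\xi_0}^0\Pi^{[3]}q_\pm^0$.

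Next I would compute the relevant blocks of $\Pi^{[1]}$ and $\Pi^{[2]}$ from the Riesz formula, expanding $(\lambda I-L_{\xi_0}^\delta)^{-1}$ by Neumann series and taking residues at $\lambda_0$; keeping only staircase paths and collapsing non-resonant interior resolvents to $G_m$'s, one obtains $(\Pi^{[1]})^j_{-1}=G_{j-1}(L^{[1]})^j_{-1}\cM_-$, $(\Pi^{[1]})^{j-2}_{-1}=\cM_+(L^{[1]})^{j-2}_{-1}G_{j-2}$, $(\Pi^{[1]})^{j-1}_{-1}=0$, $(\Pi^{[2]})^j_{-2}=B_-\cM_-$, $(\Pi^{[2]})^{j-1}_{-2}=\cM_+C_-$ (and the mirror identities for the $+$ side, obtained by exchanging $j\leftrightarrow j'$, decreasing $\leftrightarrow$ increasing modes, $\cM_-\leftrightarrow\cM_+$); in each such block $\cM_\pm$ sits on the resonant end, which is what makes these identities hold as full operators on $\mC^2$ and not merely on the $\lambda_0$-eigenlines. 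Plugging $(\Pi^{[1]})^j_{-1}$ and $(\Pi^{[2]})^j_{-2}$ into the $m=1,2$ identities gives the first four claimed formulas at once. For $m=3$, extracting the mode-$(j-3)$ component of the displayed identity, the $(\Pi^{[1]})^3$-term vanishes (its staircase passes through $(\Pi^{[1]})^{j-1}_{-1}=0$), the two middle terms contribute $\tfrac13\cM_+(L^{[1]})^{j-2}_{-1}G_{j-2}B_-$ and $\tfrac23\cM_+C_-G_{j-1}(L^{[1]})^j_{-1}$, and $\Pi_{\xi_0}^0\Pi^{[3]}q_-^0$ is rewritten via $\Pi_{\xi_0}^0\Pi^{[3]}q_-^0=-\Pi_{\xi_0}^0(\Pi^{[1]}\Pi^{[2]}+\Pi^{[2]}\Pi^{[1]})q_-^0$ — the $\delta^3$-coefficient of $(\Pi_{\xi_0}^\delta)^2=\Pi_{\xi_0}^\delta$ restricted to $\Ran\Pi_{\xi_0}^0$ — contributing $-\cM_+(L^{[1]})^{j-2}_{-1}G_{j-2}B_--\cM_+C_-G_{j-1}(L^{[1]})^j_{-1}$; summing, the coefficients collapse to $\tfrac13-1=-\tfrac23$ and $\tfrac23-1=-\tfrac13$, which is exactly the stated $(\cU^{[3]}-\tfrac16[\p_\delta^3\Pi_{\xi_0}^0,\Pi_{\xi_0}^0])^j_{-3}$, and the symmetric computation on $q_+^0$ gives the remaining block.

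The hard part will be the third-order bookkeeping: tracking precisely which staircase paths survive in the $\Pi^{[2]}$-blocks that touch a resonant mode (this is what assembles $B_\pm$ and $C_\pm$), and — more conceptually — recognizing that the genuinely awkward extreme block of $\Pi^{[3]}$, the one with simple poles at the two resonant modes simultaneously (which would produce terms involving the regular parts of the unperturbed resolvent and $L^{[3]}$), never needs to be evaluated: after subtracting $\tfrac16[\p_\delta^3\Pi_{\xi_0}^0,\Pi_{\xi_0}^0]$, $\Pi^{[3]}$ enters only through $\Pi_{\xi_0}^0\Pi^{[3]}q_\pm^0$, which the idempotency relation re-expresses through $\Pi^{[1]}$ and $\Pi^{[2]}$ alone. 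A routine but indispensable check throughout is that all intermediate Fourier modes stay away from $\{j,j'\}$.
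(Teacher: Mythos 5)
Your proposal is correct and follows essentially the same route as the paper's proof: expand the transport equation \eqref{def-extop-delta} into polynomial expressions in the $\delta$-derivatives of $\Pi_{\xi_0}^0$, compute the needed Fourier blocks of $\p_\delta\Pi_{\xi_0}^0$ and $\p_\delta^2\Pi_{\xi_0}^0$ by the residue theorem applied to the Neumann expansion of the resolvent, and keep only staircase products via $(L^{[m]})^p_r=0$ for $|r|>m$ and $(\Pi_{\xi_0}^0)^m_0=0$ for $m\notin\{j,j'\}$. Your elimination of $\Pi^{[3]}$ through the thrice-differentiated idempotency relation is a mild variant of the paper's device, which is simply that $\p_\delta^3\Pi_{\xi_0}^0$ occurs in $\cU^{[3]}$ exactly as $\tfrac16[\p_\delta^3\Pi_{\xi_0}^0,\Pi_{\xi_0}^0]$, i.e.\ precisely the subtracted term. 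One point needs repair, though: the identities $\cU^{[2]}q_\pm^0=(\Pi^{[2]}+\tfrac12(\Pi^{[1]})^2)q_\pm^0$ and your third-order analogue are not exact. Working out $\tfrac12([\p_\delta^2\Pi_{\xi_0}^0,\Pi_{\xi_0}^0]+[\p_\delta\Pi_{\xi_0}^0,\Pi_{\xi_0}^0]^2)$ on $\Ran\Pi_{\xi_0}^0$ and using $\Pi_{\xi_0}^0\,\p_\delta\Pi_{\xi_0}^0\,\Pi_{\xi_0}^0=0$ gives $\cU^{[2]}q_\pm^0=\Pi^{[2]}q_\pm^0+\tfrac12\,\Pi_{\xi_0}^0(\Pi^{[1]})^2q_\pm^0$, which differs from your claim by $\tfrac12(I-\Pi_{\xi_0}^0)(\Pi^{[1]})^2q_\pm^0$, a generically nonzero function; the order-three discrepancy is of the same nature. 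These error terms happen to have vanishing $\mp2$- and $\mp3$-shift blocks, for exactly the staircase reasons you already invoke (each contributing path passes through $(\Pi^{[1]})^{j-1}_{-1}=0$ or through $(\Pi_{\xi_0}^0)^m_0=0$ at a non-resonant $m$), so your final formulas --- including the coefficient collapse $\tfrac13-1=-\tfrac23$ and $\tfrac23-1=-\tfrac13$ --- are the right ones; but as written those intermediate identities should either be corrected or asserted only at the level of the extreme Fourier blocks.
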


The proof of the proposition is postponed to Appendix~\ref{s:ext-delta}.

Now that we have a quite explicit formula for $\Gamma$, we need to track cancellations to reduce its complexity. Our final result is as follows.

\begin{prop}\label{prop-rmc}
The instability index $\Gamma$ takes the form
\begin{align*}
\Gamma = \big\langle J V_{+}, \mathcal{N} V_{-}\big \rangle
\end{align*}
where
\begin{align*}
\cN&:=
(L^{[3]})_{-3}^j
+(L^{[1]})_{-1}^{j-2}G_{j-2} (L^{[2]})_{-2}^j
+(L^{[2]})_{-2}^{j-1}G_{j-1} (L^{[1]})_{-1}^j 
+(L^{[1]})_{-1}^{j-2}G_{j-2}(L^{[1]})_{-1}^{j-1}G_{j-1}(L^{[1]})_{-1}^{j}\,.
\end{align*}
\end{prop}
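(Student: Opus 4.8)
The plan is to start from the four-line expression for $\Gamma$ displayed just before Proposition~\ref{prop:ext-delta}, substitute the explicit formulas for $(\cU^{[m]})^{j}_{-m}$ and $(\cU^{[m]})^{j'}_{m}$ provided by Proposition~\ref{prop:ext-delta}, and then systematically exploit the Hamiltonian symmetry to collapse the resulting sum. The key algebraic facts will be: first, that $V_+$ and $V_-$ are the (constant-coefficient) eigenvectors of $\Sigma^L_0$ at the crossing, so $\cM_\pm V_\pm=V_\pm$, $\cM_\pm$ being the associated spectral projectors, which immediately simplifies the trailing $\cM_\pm$ factors in Proposition~\ref{prop:ext-delta} whenever they hit $V_\pm$; second, that $\langle JV_+,\cM_- W\rangle = \langle JV_+, W\rangle$ and dually $\langle J\cM_+ Z, W\rangle=\langle JZ,W\rangle$ — more precisely that $\cM_+^*J=J\cM_-$ up to the sign bookkeeping coming from \eqref{Krein} — so that the leading $\cM_\mp$ factors appearing in the $\cU^{[3]}$ terms get absorbed into the pairing $\langle J\,\cdot\,,\,\cdot\,\rangle$; and third, that the resolvent blocks $G_m$ are self-adjoint-conjugate in the appropriate sense, $G_m^* = -J^{-1} G_m^{\,\sharp} J$ or similar, reflecting $(L^\delta)^*=-J^{-1}L^\delta J$ at the symbol level. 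Carrying these through should let me rewrite every inner product $\langle J (\cU^{[m]})^{j'}_m V_+, (\cdots) V_-\rangle$ as $\langle JV_+, (\cdots)' V_-\rangle$ for a suitably transposed operator string, after which the four lines combine into the single string $\cN$.

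Concretely, I would proceed term by term in the order the lines are written. The first line contributes $(L^{[3]})^j_{-3}$ (from $\langle JV_+,(L^{[0]})^j_0 V_-\rangle$ after moving $(\cU^{[3]})^{j'}_3$ across — noting $(L^{[0]})^j_0 V_- = \lambda_0 V_-$ is absorbed, and the genuine contribution of $(\cU^{[3]})^{j'}_3$ pairs against the off-diagonal resolvent, producing exactly the cube-of-$L^{[1]}$ term and one of the mixed $L^{[1]}G L^{[2]}$ terms) together with pieces of the other three terms of $\cN$. The second and third lines contribute the two mixed terms $(L^{[2]})^{j-1}_{-1}G_{j-1}(L^{[1]})^j_{-1}$ and $(L^{[1]})^{j-2}_{-1}G_{j-2}(L^{[2]})^j_{-2}$, once the $B_\pm$ and $(\cU^{[2]})_{\pm2}$ substitutions are made and the spurious terms involving $G_{j-1}(L^{[1]})^j_{-1}\cM_- V_-$ that appear with coefficient $-\tfrac13$ and $-\tfrac23$ are seen to cancel against the analogous pieces descending from the $\cU^{[3]}$ line (this is where the factors $-\tfrac13$ and $-\tfrac23$, which sum to $-1$ and cancel the $+1$ coming from the direct $\cU^{[3]}$ expansion, do their work). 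The fourth line, after inserting the $B_-$ and $C_-$ expressions, produces the remaining part of the triple-$L^{[1]}$ term $(L^{[1]})^{j-2}_{-1}G_{j-2}(L^{[1]})^{j-1}_{-1}G_{j-1}(L^{[1]})^j_{-1}$ and closes the bookkeeping. Throughout, I would use that $j-j'=3$ to guarantee that only the indices $j-1,j-2$ (equivalently $j'+1,j'+2$) appear as intermediate Fourier modes, so no $G_j$ or $G_{j'}$ ever occurs and all the $G_m$ are well-defined.

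The expected main obstacle is the sheer bulk of the cancellation, together with keeping the symmetrization signs straight: the pairing $\langle J\,\cdot\,,\,\cdot\,\rangle$ is skew-linear in the first slot, the Krein normalizations in \eqref{Krein} carry opposite signs $+2i\omega_{j'}$ versus $-2i\omega_j$, and the $\cM_\pm$ projectors are \emph{not} orthogonal projectors (they are spectral projectors of a non-self-adjoint symbol), so "$\cM_\pm$ absorbed into the pairing" must be justified via the adjoint identity $\cM_\pm^* = \mp J\,\cM_\mp\,J^{-1}$ rather than by any naive self-adjointness. A clean way to organize this is to first establish, as a lemma, the transposition rule $\langle J\,\cU^{[m]}_{\mathrm{off},+}\,X, Y\rangle = \langle JX, (\cU^{[m]}_{\mathrm{off},+})^{\dagger} Y\rangle$ where the dagger reverses the operator string and replaces each block by its $J$-conjugate transpose, verify it once on the constant-coefficient building blocks $G_m$ and $(L^{[p]})^{\bullet}_{\bullet}$, and then apply it mechanically. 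I would also double-check the claimed form of $\cN$ by comparing the $\delta^3$ coefficient of $a(\xi_0,\delta)$ computed two ways — directly from $a = \langle q_+^\delta, A_\xi^\delta q_-^\delta\rangle$ and from the $L^\delta$-based expression — since the self-adjointness of $A_\xi^\delta$ gives an independent symmetry check $\Gamma\in\mathbb{C}$ with $\overline\Gamma = \langle JV_-,\cN^\dagger V_+\rangle$ that the final formula must respect.
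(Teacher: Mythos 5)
Your proposal follows essentially the same route as the paper's proof: substitute the blocks from Proposition~\ref{prop:ext-delta}, transpose operator strings across the pairing via the Hamiltonian rule $S[B]=JB^*J^{-1}$ (with $S[\cM_\pm]=\cM_\pm$, $S[G_m]=-G_m$, $S[B_\pm]=C_\mp$), absorb the projectors using $\cM_\pm V_\pm=V_\pm$, and cancel the $\cU^{[3]}$-against-$(L^{[0]})$ contributions against the remaining cross terms via the resolvent identity $I+G_m(L^{[0]})_0^m=\lambda_0 G_m$ --- this is exactly the paper's $\Gamma_0+\Gamma_1+\Gamma_2$ decomposition carried out in a different order. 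Two small corrections to your bookkeeping: the adjoint identity is $\cM_\pm^*=J^{-1}\cM_\pm J$ (same sign, no swap to $\cM_\mp$), and the term $(L^{[3]})_{-3}^j$ of $\cN$ comes directly from the last line of the four-line expression for $\Gamma$, not from moving $(\cU^{[3]})_3^{j'}$ across.
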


The quite spectacular simplification hinges on properties inherited from Hamiltonian symmetry.
Let us recall that the Hamiltonian symmetry is precisely that $J\,(L_\xi^\delta)^*\,J^{-1}=-L_\xi^\delta$, and observe that $(M^m_p)^*=(M^*)^{m+p}_{-p}$. This provides various relations whose statement is facilitated by the introduction of the piece of notation
\[
S[B]\,:=\,J\,B^*\,J^{-1}
\]
for any matrix $B$. Indeed, all operators we handle are obtained as functions of $L_\xi^\delta$ and from Hamiltonian symmetry stem
\begin{align*}
S[(L^{[m]})^p_q]&=-(L^{[m]})^{p+q}_{-q}\,,&
S[(\p_\delta^m\Pi_{\xi_0}^0)^p_q]&=-(\p_\delta^m\Pi_{\xi_0}^0)^{p+q}_{-q}\,,&
\end{align*}
and
\begin{align*}
S[\cM_{\pm}]&=\cM_{\pm}\,,&
S[G_{m}]&=-G_{m}\,,&
S[B_{\pm}]&=C_{\mp}\,.
\end{align*}

The last piece of computation we need is
\begin{align*}
\cM_{\pm}\,V_{\pm}&=V_{\pm}\,.
\end{align*}

With this in hands, we may prove Proposition~\ref{prop-rmc}.

\begin{proof}[Proof of Proposition~\ref{prop-rmc}]

Let us split $\Gamma$ as
\begin{align*}
\Gamma= \Gamma_0+\Gamma_1+\Gamma_2,
\end{align*}
where
\begin{align*}
\Gamma_0&=\big\langle J V_{+}, (L^{[3]})_{-3}^{j}V_{-}\big\rangle
+\big\langle J (\cU^{[2]})_{2}^{j'}V_{+}, (L^{[1]})_{-1}^{j} V_{-}\big\rangle
+\big\langle J (\cU^{[1]})_{1}^{j'}V_{+},(L^{[2]})_{-2}^{j} V_{-}\big\rangle\,,\\
\Gamma_1&=\big\langle J(\cU^{[3]})_{3}^{j'}V_{+}, (L^{[0]})^j V_{-}\big\rangle
+ \big\langle J V_{+}, (L^{[0]})_0^{j'} (\cU^{[3]})_{-3}^{j}V_{-}\big\rangle, \\
\Gamma_2&=\big\langle J (\cU^{[1]})_{1}^{j'}V_{+},
\big((L^{[0]})_0^{j-2} (\cU^{[2]})_{-2}^{j}
+(L^{[1]})_{-1}^{j-1}(\cU^{[1]})_{-1}^{j}\big)V_{-}\big\rangle\\
 &\quad
 +\big\langle J (\cU^{[2]})_{2}^{j'}V_{+}, (L^{[0]})_0^{j-1} (\cU^{[1]})_{-1}^{j} V_{-}\big\rangle\\
&\quad
+\big\langle J V_{+},
\big((L^{[1]})_{-1}^{j-2}(\cU^{[2]})_{-2}^{j}
+(L^{[2]})_{-2}^{j-1}(\cU^{[1]})_{-1}^{j}\big) V_{-}\big\rangle\,.
\end{align*}

Our goal is to show that $\Gamma_0= \big\langle J V_{+}, \mathcal{N} V_{-}\big \rangle$ and $\Gamma_2=-\Gamma_1$. The first point is straightforward. Note, for instance, that
\begin{align*}
\big\langle J (\cU^{[1]})_{1}^{j'}V_{+},(L^{[2]})_{-2}^{j} V_{-}\big\rangle
&=\big\langle J V_{+},
S[(\cU^{[1]})_{1}^{j'}](L^{[2]})_{-2}^{j} V_{-}\big\rangle\\
&=\big\langle J V_{+},
\cM_+(L^{[1]})_{-1}^{j-2}G_{j-2} (L^{[2]})_{-2}^j V_{-}\big\rangle\\
&=\big\langle J S[\cM_+]V_{+},
(L^{[1]})_{-1}^{j-2}G_{j-2} (L^{[2]})_{-2}^j V_{-}\big\rangle\\
&=\big\langle J \cM_+V_{+},
(L^{[1]})_{-1}^{j-2}G_{j-2} (L^{[2]})_{-2}^j V_{-}\big\rangle\\
&=\big\langle J V_{+},
(L^{[1]})_{-1}^{j-2}G_{j-2} (L^{[2]})_{-2}^j V_{-}\big\rangle\,.
\end{align*}

As an intermediate step towards $\Gamma_2=-\Gamma_1$, we observe that
\[
\Gamma_1
\,=\,
- \lambda_0\bigg\langle J V_{+},  \big(C_{-} G_{j-1}(L^{[1]})_{-1}^{j}
+(L^{[1]})_{-1}^{j-2} G_{j-2} B_{-}\big) V_{-}\bigg\rangle
\]
since
\begin{align*}
(L^{[0]})^j_0\,V_-&=\lambda_0\,V_-\,,&
(L^{[0]})^{j'}_0\,V_+&=\lambda_0\,V_+\,,
\end{align*}
and
\begin{align*}
\langle J \big(\big[\p_\delta^3\Pi_{\xi_0}^0, \Pi_{\xi_0}^0\big]\big)_{3}^{j'}\,V_+,V_-\rangle
&=
\langle J V_+,S\left[\big(\big[\p_\delta^3\Pi_{\xi_0}^0, \Pi_{\xi_0}^0\big]\big)_{3}^{j'}\right]\,V_-\rangle\\
&=-\langle J V_+,\big(\big[\p_\delta^3\Pi_{\xi_0}^0, \Pi_{\xi_0}^0\big]\big)_{-3}^j\,V_-\rangle\,.
\end{align*}

Furthermore,
\begin{align*}
\Gamma_2&
=\big\langle J V_{+},\left((L^{[1]})_{-1}^{j-2}\big(I+G_{j-2}(L^{[0]})_0^{j-2}\big)B_{-} 
+
C_{-} (G_{j-1})^{-1} \big(I+G_{j-1}(L^{[0]})_0^{j-1}\big)(\cU^{[1]})_{-1}^{j}
\right)V_{-}\big\rangle\\
&=-\Gamma_1\,,
\end{align*}
since
\begin{align*}
I+G_{j-2}(L^{[0]})_0^{j-2}&=\lambda_0 G_{j-2}\,,&
I+G_{j-1}(L^{[0]})_0^{j-1}&=\lambda_0 G_{j-1}\,.
 \end{align*}
\end{proof}

 \subsection{Proof of Proposition~\ref{th5}}

Proposition~\ref{th5} follows from the following lemma, whose computationally demanding proof is given in Appendix~\ref{s:computations}.

  \begin{lem}\label{dl-c-s}
  For a pressure given by $P(\rho)=T\rho^\gamma$, $T>0$, $\gamma\geq1$, one has
  \begin{align*}
  \displaystyle
  \Gamma\stackrel{V\to\infty}{=}f(\gamma)+\cO(V^{-1})\quad\textrm{where }
  f(\gamma):=-\f{1}{6144}(65\gamma^3+315\gamma^2+115\gamma-135)\,.
  \end{align*}
  \end{lem}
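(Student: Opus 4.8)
The plan is to obtain an asymptotic expansion as $V\to\infty$ of every ingredient entering the formula $\Gamma=\langle JV_+,\mathcal N V_-\rangle$ of Proposition~\ref{prop-rmc}, and then to simply collect terms. First I would record the behaviour of the limiting wavenumber and crossing data: from \eqref{eigenvalue}--\eqref{defomega} and Lemma~\ref{l:0-lemma} with $\ell=3$, one has $k_0^V=1/(2\pi\sqrt{h(1;V)})$ with $h(1;V)=V^2-P'(1)=V^2-T\gamma$, so $k_0^V=\cO(V^{-1})$, while $j=j_{+,3}$ and $\xi_0=\xi_{+,3}$ are obtained from \eqref{para-jk}; as $V\to\infty$ the quantity $\sqrt{\ell^2-4}/(\sqrt{P'(1)}/V)=V\sqrt5/\sqrt{T\gamma}$ diverges, so $j_{+,3}$ and $\xi_{+,3}$ oscillate, but the combinations $k_0(2\pi j+\xi_0)$ and $k_0(2\pi j'+\xi_0)$ that actually appear (through $\omega_j(\xi_0)$, $\omega_{j'}(\xi_0)$, $\lambda_0$, and the $G_m$) converge to finite limits determined by the defining relation $\lambda_-^{j}(\xi_0)=\lambda_+^{j'}(\xi_0)$. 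Solving that relation in the limit gives explicit limiting values of $\omega_j(\xi_0)$, $\omega_{j'}(\xi_0)$ and $\lambda_0$ in terms of $\gamma$ (and $T$, but the final answer is $T$-independent, a good internal check), and likewise limiting values of $k_0(2\pi m+\xi_0)$ for the shifted indices $m\in\{j-1,j-2,j-3,j'+1,j'+2\}$.

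Next I would expand the operators $L^{[m]}=\frac1{m!}\p_\delta^m L^0_{\xi_0}$ for $m=1,2,3$. These are read off from the symbol $\Sigma^L_\delta$ together with the profile expansions \eqref{ued1}, \eqref{ued2}, \eqref{ued3} and the expansion $k^{(\delta)}=k_0+\delta^2k_2+\cO(\delta^4)$; the relevant Fourier blocks $(L^{[m]})^p_q$ are finite-dimensional $2\times2$ matrices whose entries are polynomials in $\gamma$, $V$, $k_0$, $\omega$'s. I would substitute $F''(1)=P'(1)=T\gamma$, $F'''(1)=P''(1)-P'(1)=T\gamma(\gamma-2)$, and the analogous value of $F^{(4)}(1)$, then expand each entry in powers of $V^{-1}$, keeping the leading term. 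Similarly the resolvent blocks $G_m=(\lambda_0 I-\Sigma^L_0(k_0(2\pi m+\xi_0)))^{-1}$ have limits obtained by inverting explicit $2\times2$ matrices. Assembling $\cN=(L^{[3]})_{-3}^j+(L^{[1]})_{-1}^{j-2}G_{j-2}(L^{[2]})_{-2}^j+(L^{[2]})_{-2}^{j-1}G_{j-1}(L^{[1]})_{-1}^j+(L^{[1]})_{-1}^{j-2}G_{j-2}(L^{[1]})_{-1}^{j-1}G_{j-1}(L^{[1]})_{-1}^j$ and pairing with $JV_+$ and $V_-$ (using $\mathcal M_\pm V_\pm=V_\pm$ and the $S[\cdot]$-symmetries already exploited in the proof of Proposition~\ref{prop-rmc} to pre-simplify) then yields $\Gamma$ as a rational function of $\gamma$ and $V$ whose $V\to\infty$ limit is the claimed cubic $f(\gamma)=-\frac1{6144}(65\gamma^3+315\gamma^2+115\gamma-135)$, with remainder $\cO(V^{-1})$ by smoothness of all the expansions in $1/V$ (uniformly, since the limiting crossing data stay in a compact set).

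The main obstacle is purely computational: the expression for $\cN$ involves several matrix products of $2\times2$ blocks each of which is itself a product of three factors, and the entries carry $\gamma$-dependence through $F'',F''',F^{(4)}$ as well as through the oscillating combinations hidden in $\omega_j(\xi_0),\omega_{j'}(\xi_0),\lambda_0$; keeping track of these without error, and verifying that the spurious $T$-dependence and the oscillatory $\cO(1)$ pieces cancel, is delicate. I would organize the bookkeeping by working to leading order in $V^{-1}$ from the outset — i.e.\ replacing $k_0$ by its $\cO(V^{-1})$ size, $\omega_m(\xi_0)$ by its limit, etc., and only at the end checking that subleading corrections contribute at $\cO(V^{-1})$ — and by exploiting the Hamiltonian symmetries $S[(L^{[m]})^p_q]=-(L^{[m]})^{p+q}_{-q}$, $S[G_m]=-G_m$, $S[\cM_\pm]=\cM_\pm$ to halve the number of independent scalar products. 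As a sanity check I would confirm that the answer is a cubic in $\gamma$ (the highest power arising from $F^{(4)}(1)$ and from triple products of $F'''$-type terms), independent of $T$, and nonzero for $\gamma\ge1$, which is exactly what Proposition~\ref{th5} needs to conclude that $\Gamma$ vanishes at most finitely often.
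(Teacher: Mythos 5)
Your overall strategy coincides with the paper's: start from the formula $\Gamma=\langle JV_+,\cN V_-\rangle$ of Proposition~\ref{prop-rmc}, make every block $(L^{[m]})^p_q$ and $G_m$ explicit, and expand in $V^{-1}$. The paper does this through the variable $s=j+\xi_0/(2\pi)-\tfrac32=\tfrac{\sqrt5}{2}V/\sqrt{P'(1)}$, which also disposes of the oscillation of $(j_{+,3},\xi_{+,3})$ exactly as you anticipate, and your identification of the needed ingredients ($F^{(k)}(1)$ for power laws, the limiting crossing data, the $S[\cdot]$ symmetries, the $T$-independence check) is sound.

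There is, however, a concrete flaw in the organizing principle you announce, namely ``working to leading order in $V^{-1}$ from the outset --- replacing $\omega_m(\xi_0)$ by its limit, etc. --- and only at the end checking that subleading corrections contribute at $\cO(V^{-1})$.'' This rests on the false premise that the blocks entering $\cN$ have finite limits. They do not: the diagonal entries of $(L^{[1]})^{j-m}_{-1}$ contain $i\kappa_0 V\,(\tfrac32-m+s)\sim i\,s$ (since $\kappa_0 V\to1$ while $s\sim V$), and likewise for $(L^{[2]})$ and $(L^{[3]})$, so the scalar products making up $\Gamma$ individually grow like $s^2\sim V^2$. The finite limit $f(\gamma)$ only emerges after a cancellation of these $\cO(V^2)$ contributions across the four groups of terms (in the paper's notation, the coefficients $-\tfrac18$, $+\tfrac{1}{32}(5\gamma+1)$ and $-\tfrac{1}{32}(5\gamma-3)$ of $s^2$ in $\mathfrak{R}_1$, $\mathfrak{R}_2+\mathfrak{R}_3$ and $\mathfrak{R}_4$ sum to zero), and the constant term is then assembled from corrections two and three orders down. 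Hence every ingredient ($\omega_j$, $\omega_{j'}$, $\chi_m$, $d_m$, $\kappa_0 V$, the $\sigma$'s and $\alpha$'s) must be expanded to relative accuracy $\cO(s^{-2})$ or $\cO(s^{-3})$ --- the paper carries $\omega_j$ to $\cO(s^{-4})$ --- rather than to leading order. As written, your final ``check'' would reveal that the neglected terms contribute at $\cO(1)$ rather than $\cO(V^{-1})$, and the leading-order computation would return $0$ instead of $f(\gamma)$; anticipating this cancellation and carrying enough orders throughout is precisely the hard part of the proof that the proposal misses.
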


Let us show how to deduce the proposition from the lemma.

\begin{proof}[Proof of Proposition~\ref{th5}]
Since $f(1)<0$, $f'(1)<0$ and $f''(1)<0$, $f$ does not vanish on $[1,\infty)$. Let us now fix $T>0$ and $\gamma\geq1$, and achieve the proof.

The index $\Gamma$ is analytic on $(\sqrt{P'(1)},+\infty)$ and does not vanish near $\infty$. It remains to show that $\Gamma$ does not vanish near $\sqrt{P'(1)}$. This follows from the fact that $\Gamma$ is non zero and is given as a meromorphic function of $\sqrt{V^2-P'(1)}$.
  \end{proof}

\appendix

\section{Proof of Lemma~\ref{l:0-lemma}}\label{s:0}

Let us first observe by monotonicity that $\lambda_-^{j}(\xi)=\lambda_+^{j'}(\xi)$ implies $j'<j$, and we assume the latter from now on. Setting $\tau=P'(1)/V^2$, $z=j+\xi/2\pi$ and $z'=j'+\xi/2\pi$, equation $\lambda_-^{j}(\xi)=\lambda_+^{j'}(\xi)$ is equivalent to
\[
z-z'\,=\,\sqrt{1-\tau+\tau\,z^2}+\sqrt{1-\tau+\tau\,(z')^2}\,.
\]
Since $z\neq z'$, this implies
\[
\sqrt{1-\tau+\tau\,z^2}-\sqrt{1-\tau+\tau\,(z')^2}\,=\,\tau\,(z+z')\,.
\]
Combining with the original form leaves
\begin{equation}\label{aux}
(\tau-1)\,z'=-(\tau+1)\,z+2\sqrt{1-\tau+\tau\,z^2}\,.
\end{equation}
By tracking manipulations carried out so far one can recover the original equation under the apparent extra assumption that
\[
\frac{\sqrt{1-\tau+\tau\,z^2}-\sqrt{1-\tau+\tau\,(z')^2}}{z-z'}\neq 1\,.
\]
Yet it follows from $\tau<1$ that the left-hand side of the inequality has absolute value less than $1$. Thus we have obtained that  $\lambda_-^{j}(\xi)=\lambda_+^{j'}(\xi)$ is equivalent to \eqref{aux}, also written as
\[
\frac12(1-\tau)\,(z-z')=-\tau\,z+\sqrt{1-\tau+\tau\,z^2}\,.
\]
The latter is equivalent to
\begin{align*}
\frac12(1-\tau)\,(z-z')+\tau\,z&\geq0\,,\\
\frac14\,(z-z')^2-1&=\tau\,(z-\tfrac12(z-z'))^2\,.
\end{align*}
This implies that, as claimed, $j-j'\geq 2$ is necessary. Fixing $\ell=z-z'=j-j'\geq 2$, one may solve the equation for $z$
\[
z\,=\,\tfrac12\ell\pm\sqrt{\frac{\ell^2-4}{4\,\tau}}
\]
and the sign constraint is automatically satisfied (since $\tau<1$).

\section{A glimpse at slow modulation theory}\label{secW}

In the present section, we connect the analysis of Theorem~\ref{th4} with spectral validations of formally derived modulation systems, also known as Whitham systems, and their small-amplitude limit, as studied for large classes of Hamiltonian systems in \cite{JNS-BNR,Nonlinearity-BMR2,SMF-Audiard-Rodrigues}.

\subsection*{Formal derivation}

We first recall how to derive a modulation system from geometrical optics arguments.

To begin with it is useful to gather conservation laws associated with \eqref{eep-int}. From the fact that System~\eqref{eep-int} has a Hamiltonian structure whose Hamiltonian density commutes with spatial translation one deduces that it implies a conservation law for the momentum density $\mathcal{M}$,
\[
\mathcal{M}[E,u]:=-\,u\,\p_xE\,,
\]
generating the group of spatial translations. Namely, applying the abstract computations from \cite[Appendix~A]{SMF-Audiard-Rodrigues}, from \eqref{eep-int} we derive for $U=(E,u)$
\begin{align}\label{eq:cons}
\p_t(\mathcal{M}[U])\,=\,\p_x\left(\mathcal{S}[U]\right)
\end{align}
with
\[
\mathcal{S}[U]:=\nabla_{U_x}\mathcal{M}[U]\cdot J\delta\mathcal{H}[U]-\mathcal{H}[U]+\nabla_{U_x}\mathcal{H}[U]\cdot U_x\,.
\]

We are now in position to derive a modulation system. To do so, we introduce a one-phase slow/fastly-oscillatory \emph{ansatz}
\begin{align}\label{eq:ansatz}
U^{(\epsilon)}(t,x)
\,=\,U_{(\epsilon)}\left(\epsilon\,t,\epsilon\,x;
\frac{\varphi_{(\epsilon)}(\epsilon\,t,\epsilon\,x)}{\epsilon}\right)
\end{align}
with, for any $(T,X)$, $\zeta\mapsto U_{(\epsilon)}(T,X;\zeta)$ periodic of period $1$ and, as $\epsilon\to0$,
\begin{align*}
U_{(\epsilon)}(T,X;\zeta)&=
U_{0}(T,X;\zeta)+\epsilon\,U_{1}(T,X;\zeta)+o(\epsilon)\,,\\
\varphi_{(\epsilon)}(T,X)&=
\varphi_{0}(T,X)+\epsilon\,\varphi_{1}(T,X)+o(\epsilon)\,.
\end{align*}
Plugging \eqref{eq:ansatz} into \eqref{eep-int} and identifying the leading-order terms yields
\begin{align*}
U_{0}(T,X;\zeta)&\,=\,\underline{U}^{(\delta,V)(T,X)}(\zeta)\,,&
\p_X\varphi(T,X)&=k^{(\delta,V)(T,X)}\,,&
\p_T\varphi(T,X)&=-V(T,X)\,k^{(\delta,V)(T,X)}\,,
\end{align*}
for some slow $(\delta,V)$. Note that this already implies
\begin{align}\label{eq:W1}
\p_T(k^{(\delta,V)})=\p_X(-V\,k^{(\delta,V)})\,.
\end{align}
To complete it to form a system for the evolution of $(\delta,V)$, one may insert \eqref{eq:ansatz} into \eqref{eq:cons}, identify the leading-order terms and average over one period of $\zeta$ so as to obtain
\begin{align}\label{eq:W2}
\p_T\left(\langle\,\mathcal{M}[\underline{U}^{(\delta,V)}(\cdot)]\,\rangle\right)
\,=\,\p_X\left(\langle\,\mathcal{S}[\underline{U}^{(\delta,V)}(\cdot)]\,\rangle\right)\,.
\end{align}
System \eqref{eq:W1}-\eqref{eq:W2} is the sought modulation system.

\subsection*{Spectral validation and small-amplitude limit}

The proof of Theorem~\ref{th4} contains the ingredients to obtain a spectral validation of  \eqref{eq:W1}-\eqref{eq:W2}. In particular it yields that if for some $(\delta,V)$ the linearization about $(\delta,V)$ of \eqref{eq:W1}-\eqref{eq:W2} possesses a non real characteristic velocity then there exists a positive $\varepsilon_0(\delta,V)$ such that for any $0<|\xi|<\varepsilon_0(\delta,V)$
\[
\sigma_{per}(L_{\xi}^{(\delta,V)})\cap B(0,\varepsilon_0(\delta,V))
\cap \{\,\lambda\,;\,\Re(\lambda)>0\,\}\,\neq\,\emptyset\,.
\]

Moreover the proof of Theorem~\ref{th4} also contains the elements to elucidate the foregoing criterion in the small-amplitude limit. The conclusion is that when $k_2^V<0$ this side-band instability does happen when $\delta$ is sufficiently small, whereas when $k_2^V>0$ this side-band instability cannot happen when $\delta$ is sufficiently small.

We point out that the small-amplitude analysis is simpler here than in \cite{Nonlinearity-BMR2,SMF-Audiard-Rodrigues} because we are actually analyzing the splitting of a double eigenvalue in a modulation system of two equations whereas for equations of Korteweg-de Vries type this double eigenvalue is burried in a modulation system of three equations, and for equations of Schr\"odinger or Euler-Korteweg type it is hidden in a modulation system of four equations.

Let us also observe that here, unlike what happens for systems considered in \cite{Nonlinearity-BMR2,SMF-Audiard-Rodrigues}, there is no limiting solitary wave and thus no large-period regime to analyze.

\section{Expansions of extension operators}\label{secA}

\subsection{Proof of  Proposition~\ref{prop-extop}}\label{s:ext-xi}

Proposition~\ref{prop-extop} is a constant-coefficient periodic computation, thus it is convenient to introduce Fourier series to prove it. Since there is little risk of confusion, we use for Fourier series the same notation as for Fourier transforms, $\cF(g)=\widehat{g}$ being defined for functions of $L^1(\mR/\mZ)$ by
\[
(\cF g)(j)=\hat{g}_j:=\langle e^{i2\pi j\cdot},g\rangle
=\int_{0}^{1} e^{-i2\pi jx} g(x)\, \d x\,,\qquad j\in\mZ\,.
\]

Our goal is to prove for $\cU_\xi:=\cU^0(\xi)$,
\beq\label{exp-extop-F}
\begin{aligned}
  \cF\big(\cU_{\xi}g\big)(j)&=\hat{g}_j, \quad j\neq \pm 1, \\
  \cF\big(\cU_{\xi} g\big)(\pm 1)&= \hat{g}_j\mp
  \alpha\,\diag(1,-1)
 \hat{g}_{\pm 1}+\f{1}{2}\big(
\alpha^2 \hat{g}_{\pm 1}
  + \beta\,\diag (1,-1)\hat{g}_{\pm 1}\big)\xi^2+\cO(\xi^3)\,\|g\|_{L^2},
\end{aligned}
\eeq
where $\alpha$, $\beta$ are given by \eqref{defalp-beta}. The starting point is that from the Cauchy problem defining $\cU_\xi$ stems
\begin{align}\label{exp-extop-0}
    \cU_\xi=I + \xi\,[\Pi_0', \Pi_0]+\f{1}{2}\xi^2\left([\Pi_0',\Pi_0]^2+[\Pi_0'',\Pi_0]\right)+\cO(\xi^3)\,,
\end{align}
where $\Pi_{\xi}:=\Pi_{\xi}^0$.

Note that for any $\xi$, $\Pi_\xi$ is a projector with range spanned by $\varphi^{-1}_+(\xi,\cdot)$ and $\varphi^{1}_-(\xi,\cdot)$ (with notation from \eqref{defbasis-constant}), that are trigonometric monomials respectively in $e^{-i\,2\pi\cdot}$ and $e^{i\,2\pi\cdot}$. Therefore for any $\xi$,
\begin{align*}
\cF(\Pi_{\xi} g)(j)&=0\,,\quad\textrm{if }j\notin\{-1,1\}\,,\\
\Pi_{\xi} g&=0\,,\quad\textrm{if }\cF(\Pi_{\xi} g)(1)=0\textrm{ and }\cF(\Pi_{\xi} g)(-1)=0\,.
\end{align*}
Combined with \eqref{exp-extop-0}, this yields the first half of \eqref{exp-extop-F}.

For the remaining part, we use Cauchy Residue theorem to obtain
\begin{align*}
    \cF(\Pi_{\xi} g)(\pm 1)= \f{1}{\lambda_{\pm}^{\mp 1}-\lambda_{\mp}^{\mp 1}}
    \left(\lambda_{\pm}^{\mp 1}-\Sigma_0^L (i\,k^{(0)}(\pm\,2\pi+\xi))\right)^{\dagger} \hat{g}_{\pm 1},
\end{align*}
where, for any square matrix $A$, $A^\dagger$ denotes the transpose of the cofactor matrix of $A$ so that if $A$ is invertible $A^{-1}=A^\dagger/\det(A)$. To derive the former, we have used
\begin{align*}
\cF(L_{\xi}^0 g)(j)&\,=\,\Sigma_0^L (i\,k^{(0)}(2\pi\,j+\xi))\,\hat{g}_j\,,\\
\det(\lambda\,I-\Sigma_0^L (i\,k^{(0)}(2\pi\,j+\xi)))
&=(\lambda-\lambda_+^j(\xi))\,(\lambda-\lambda_-^j(\xi))\,.
\end{align*}
As a result,
\begin{align}\label{pixipm1}
     \cF(\Pi_{\xi} f)(\pm 1)=\cM_{\mp}^{\pm 1}(\xi) \hat{f}(\pm 1)
\end{align}
where, for any $(j,\xi)$,
\begin{align}\label{defcM}
    \cM_{\pm}^j(\xi):= \f{1}{2}\left(\begin{array}{cc}
       1 & \mp \f{1}{i\,\omega_j(\xi)} \\[5pt]
       \mp {i\,\omega_j(\xi)}  &  1
    \end{array}\right)\,.
\end{align}
Then, differentiating \eqref{pixipm1} and taking into account that $\omega_{-1}(\xi)=\omega_1(-\xi)$, one receives
\begin{align*}
\cF(\Pi_{\xi} f)(\pm 1)&=
     \f{i}{2}\left(\begin{array}{cc}
       0 & \mp \f{1}{\omega_{\pm1}(\pm\xi)} \\[5pt]
       \pm {\omega_{\pm1}(\pm\xi)}  & 0
    \end{array}\right) \hat{f}(\pm 1)\\
     \cF(\Pi_{\xi}' f)(\pm 1)&=
     \f{i}{2}\left(\begin{array}{cc}
       0 & \pm \f{\omega_{\pm1}'(\xi)}{(\omega_{\pm1}(\xi))^2} \\[5pt]
       \pm {\omega_{\pm1}'(\xi)}  & 0
    \end{array}\right) \hat{f}(\pm 1)
    =\f{i}{2}\left(\begin{array}{cc}
       0 & \f{\omega_1'(\pm\xi)}{(\omega_{1}(\pm\xi))^2} \\[5pt]
       {\omega_{1}'(\pm\xi)}  & 0
    \end{array}\right) \hat{f}(\pm 1)\\
         \cF(\Pi_{\xi}'' f)(\pm 1)&=
     \f{i}{2}\left(\begin{array}{cc}
       0 & \pm \f{\omega_{\pm1}''(\xi)\omega_{\pm1}(\xi)-2(\omega_{\pm1}'(\xi))^2}{(\omega_{\pm1}(\xi))^3} \\[5pt]
       \pm \omega_{\pm1}''(\xi)  & 0
    \end{array}\right) \hat{f}(\pm 1)\\
    &=\f{i}{2}\left(\begin{array}{cc}
       0 & \pm \f{\omega_{1}''(\pm\xi)\omega_{1}(\pm\xi)-2(\omega_{1}'(\pm\xi))^2}{(\omega_{1}(\pm\xi))^3} \\[5pt]
       \pm \omega_{1}''(\pm\xi)  & 0
    \end{array}\right) \hat{f}(\pm 1)\,.
\end{align*}
Since for any numbers $\alpha_1$, $\beta_1$, $\alpha_2$, $\beta_2$,
\[
\left[\begin{pmatrix}0&\alpha_1\\\beta_1&0\end{pmatrix},
\begin{pmatrix}0&\alpha_2\\\beta_2&0\end{pmatrix}\right]
\,=\,(\alpha_1\beta_2-\alpha_2\beta_1)\,\diag (1,-1)\,,
\]
one deduces that
\begin{align*}
    \cF([\Pi_0',\Pi_0] f)(\pm 1)&=\mp \alpha\, \diag (1,-1) \hat{f}(\pm 1)\,,\\
    \cF([\Pi_0'',\Pi_0] f)(\pm 1)&= \beta\, \diag (1,-1) \hat{f}(\pm 1)\,,
\end{align*}
from which the proof follows.


\subsection{Proof of Proposition~\ref{prop:ext-delta}}\label{s:ext-delta}

For the sake of concision let us set $\Pi_\delta:=\Pi_{\xi_0}^\delta$ and $R^\lambda:=(\lambda\,I-L_{\xi_0}^0)$.

Then expanding from the Cauchy problem defining $\cU_{\xi_0}^\delta$,
\begin{align*}
\cU^{[1]}&=[\Pi'_0,\Pi_0]\,,&
\cU^{[2]}&=
\f{1}{2}\big(\big[\Pi_0'', \Pi_0\big]
+[\Pi_0', \Pi_0\big]^2 \big)\,,
\end{align*}
and
\begin{align*}
\cU^{[3]}&=
\f{1}{6}\bigg(\big[\Pi_0''',\Pi_0\big]
+\big[\Pi_0'',\Pi_0'\big]+\big[\Pi_0', \Pi_0\big]\big[\Pi_0'', \Pi_0\big]
+ 2\big[\Pi_0'', \Pi_0\big]\big[\Pi_0', \Pi_0\big]
+\big[\Pi_0', \Pi_0\big]^3 \bigg)\\
&=\f{1}{6}\big[\Pi_0''',\Pi_0\big]
+\frac13\,\big[\Pi_0', \Pi_0\big]\,\cU^{[2]}
+\f{1}{6}\bigg(\big[\Pi_0'',\Pi_0'\big]
+ 2\big[\Pi_0'', \Pi_0\big]\big[\Pi_0', \Pi_0\big]
+\big[\Pi_0', \Pi_0\big]^3\bigg)\,.
\end{align*}
As in Appendix~\ref{s:ext-xi}, we then observe that
\begin{align*}
 \Pi_0 (e^{2i\pi m \,(\cdot)}X)
=\begin{cases}
     e^{2i\pi m (\cdot)}\cM_{-}\,X\,,&\textrm{if }m=j\,,\\
     e^{2i\pi m (\cdot)}\cM_{+}\,X\,,&\textrm{if }m=j'\,,\\
        0\,, &\textrm{otherwhise}.
  \end{cases}
\end{align*}

To compute expansions of the projector, we rely on
 \begin{align*}
\p_\delta\big(\lambda I-L_{\xi_0}^{\delta}\big)^{-1}\big|_{\delta=0}
&=
R^{\lambda}\,L^{[1]}\,R^{\lambda}\,,\\
\p_\delta^2\big(\lambda I-L_{\xi_0}^{\delta}\big)^{-1}\big|_{\delta=0}
&=
2\bigg(R^{\lambda} L^{[1]} R^{\lambda} L^{[1]} R^{\lambda}
+R^{\lambda} L^{[2]} R^{\lambda}
\bigg),
\\
\p_\delta^3\big(\lambda I-L_{\xi_0}^{\delta}\big)^{-1}\big|_{\delta=0}
&= 6\bigg(R^{\lambda}L^{[1]}R^{\lambda} L^{[1]} R^{\lambda}L^{[1]}R^{\lambda}
+R^{\lambda} L^{[2]} R^{\lambda} L^{[1]} R^{\lambda}
+R^{\lambda} L^{[1]} R^{\lambda} L^{[2]} R^{\lambda}
+ R^{\lambda} L^{[3]} R^{\lambda}\bigg). 
\end{align*}
From the Cauchy residue theorem, we deduce
\begin{align*}
(\Pi_0')_{1}^{j'}&=G_{j'+1}(L^{[1]})_{1}^{j'}\cM_{+}\,,&
(\Pi_0')_{-1}^{j}&=G_{j-1}(L^{[1]})_{-1}^{j}\cM_{-}\,,&\\
(\Pi_0')_{-1}^{j'+1}&=\cM_{+}(L^{[1]})_{-1}^{j'+1} G_{j'+1}\,,&
(\Pi_0')_{1}^{j-1}&=\cM_{-}(L^{[1]})_{1}^{j-1} G_{j-1}\,.
\end{align*}
Inserting this in the above formula for $\cU^{[1]}$ proves the claim on $(\cU^{[1]})^j_{-1}$ and $(\cU^{[1]})^{j'}_1$.

Likewise, applying again the Cauchy residue theorem, we deduce
\begin{align*}
\frac12(\Pi_0'')_{2}^{j'}&=B_+\cM_{+}\,,&
\frac12(\Pi_0'')_{-2}^{j}&=B_-\cM_-\,,&\\
\frac12(\Pi_0'')_{-2}^{j'+2}&=\cM_+\,C_-\,,&
\frac12(\Pi_0'')_{2}^{j-2}&=\cM_-\,C_+\,.
\end{align*}
Inserting this in the above formula for $\cU^{[2]}$ proves the claim on $(\cU^{[2]})^j_{-2}$ and $(\cU^{[2]})^{j'}_2$.

To complete the proof of the proposition, we compute
\begin{align*}
\bigg(\big[\Pi_0'',\Pi_0'\big]\bigg)_{-3}^j&
=2\cM_{+} C_{-} G_{j-1} (L^{[1]})_{-1}^{j} \cM_{-}
-2\cM_{+} (L^{[1]})_{-1}^{j-2} G_{j-2} B_{-}\cM_{-}\,,\\
\bigg(\big[\Pi_0'',\Pi_0'\big]\bigg)_{3}^{j'}
&=2\cM_{-} C_{+} G_{j'+1} (L^{[1]})_{1}^{j'} \cM_{+}
-2 \cM_{-} (L^{[1]})_{1}^{j'+2} G_{j'+2} B_{+} \cM_{+}\,.
\end{align*}
and
\begin{align*}
\bigg(\big[\Pi_0'', \Pi_0\big]\big[\Pi_0', \Pi_0\big]\bigg)_{-3}^j
&=-2\cM_{+} C_{-} G_{j-1} (L^{[1]})_{-1}^{j}\cM_{-}\,,\\
\bigg(\big[\Pi_0'',\Pi_0\big]\big[\Pi_0', \Pi_0\big]\bigg)_{3}^{j'}
&=-2\cM_{-} C_{+} G_{j'+1} (L^{[1]})_{1}^{j} \cM_{+}\,.
\end{align*}
Combining these concludes the proof.

\section{Proof of Lemma~\ref{dl-c-s}}\label{s:computations}

To prove Lemma~\ref{dl-c-s}, our first task is to make the formula from Proposition~\ref{prop-rmc} even more explicit. To do so, we set
\begin{align*}
\kappa_0&:=2\pi k_0\,,&
s&:=j+\frac{\xi_0}{2\pi}-\frac32=j'+\frac{\xi_0}{2\pi}-\frac32\,,
\end{align*}
and note that, inserting \eqref{uud}-\eqref{ued1}-\eqref{ued2}-\eqref{ued3} in the definition of $L_\xi^\delta$ directly gives
\begin{align*}
    (L^{[0]})_0^{j-m}=
    \bigg( \begin{array}{cc}
     i \kappa_0 V\,(\f{3}{2}-m+s)  & -1 \\
      1+F''(1)\kappa_0^2\,(\f{3}{2}-m+s)^2 & i \kappa_0 V(\f{3}{2}-m+s)
    \end{array}
    \bigg),
\end{align*}
\begin{align*}
    (L^{[1]})_{-1}^{j-m}=-\f{1}{2}
    \bigg( \begin{array}{cc}
     i \kappa_0 V( \f{3}{2}-m+s)  & 1 \\
      -F'''(1)\kappa_0^2(\f{3}{2}-m+s)(\f{1}{2}-m+s)& i \kappa_0 V(\f{1}{2}-m+s)
    \end{array}
    \bigg),
\end{align*}
\begin{align*}
    (L^{[2]})_{-2}^{j-m}=\f{1}{2}
    \bigg( \begin{array}{cc}
     i \kappa_0\sigma_0( \f{3}{2}-m+s)  &  \f{h'(1)}{3 h(1)}  \\
      -\kappa_0^2\sigma_1(\f{3}{2}-m+s)(-\f{1}{2}-m+s) & i \kappa_0\sigma_0(-\f{1}{2}-m+s)
    \end{array}
    \bigg),
\end{align*}
\begin{align*}
    (L^{[3]})_{-3}^{j}=-\f{1}{2}
    \bigg( \begin{array}{cc}
     i \kappa_0\sigma_3( \f{3}{2}+s)  &   \sigma_2 \\
      -\kappa_0^2\sigma_4(\f{3}{2}+s)(-\f{3}{2}+s)& i \kappa_0\sigma_3(-\f{3}{2}+s)
    \end{array}
    \bigg),
\end{align*}
where
\begin{align*}
\sigma_0&:=V\bigg(\f{1}{2}+\f{h'(1)}{3h(1)}\bigg)\,,&
\sigma_1&:=\f{F^{(3)}(1)h'(1)}{3 h(1)}-\f{F^{(4)}(1)}{4}\,,&
\sigma_2&:=\frac{3}{16}\left(\left(\frac{h'(1)}{h(1)}\right)^2-\frac14\frac{h''(1)}{h(1)}\right)\,,
\end{align*}
and
\begin{align*}
\sigma_3&:=V\bigg(\sigma_2+\f{1}{4}+\f{h'(1)}{3h(1)}\bigg)\,,&
\sigma_4&:=F^{(3)}(1)\sigma_2-\f{F^{(4)}(1)h'(1)}{6\,h(1)}+\f{F^{(5)}(1)}{24}\,.
\end{align*}
Setting
\begin{align*}
\chi_{m}&:=\lambda_0-i\kappa_0\,V\,\left(m+\frac{\xi_0}{2\pi}\right)=i\kappa_0 V(j-m)-i\omega_{j}(\xi_0)\,,\\
d_m&:=\f{1}{(\lambda_0-\lambda_{+}^{m}(\xi_0))(\lambda_0-\lambda_{-}^{m}(\xi_0))}=\f{1}{\chi_m^2+\omega_{m}^2(\xi_0)}\,,
\end{align*}
we also have
\[
G_{m}=
\begin{pmatrix}
0 & 0\\[2pt]
1  &    0
\end{pmatrix}
+d_m\,\begin{pmatrix}
1 \\[2pt]
-\chi_m
\end{pmatrix}
\begin{pmatrix}\chi_m& -1\end{pmatrix}\,.
\]

Next, using the foregoing computations, we make more explicit different parts of $\cN$, beginning with
\begin{align*}
&(L^{[1]})_{-1}^{j-2}G_{j-2} (L^{[2]})_{-2}^j\\
&\,=\,-\f{1}{4}
    \begin{pmatrix}
     1 \\
    i \kappa_0 V(-\f{3}{2}+s)
    \end{pmatrix}
    \begin{pmatrix}
     i \kappa_0\sigma_0( \f{3}{2}+s)  &  \f{h'(1)}{3 h(1)}
\end{pmatrix}
-\f{d_{j-2}}{4}
    \begin{pmatrix}
     i \kappa_0 V(-\f{1}{2}+s)-\chi_{j-2}\\
      -\alpha_1(-\f{3}{2}+s)
    \end{pmatrix}
\begin{pmatrix}
     \alpha_2( \f{3}{2}+s)
     &-\alpha_3
\end{pmatrix}
\end{align*}
with
\begin{align*}
\alpha_1&:=i\kappa_0 V\chi_{j-2}+F'''(1)\kappa_0^2\left(-\f{1}{2}+s\right)\,,\\
\alpha_2&:=i\kappa_0\sigma_0\chi_{j-2}+\kappa_0^2\sigma_1\left(-\f{1}{2}+s\right)\,,\\
\alpha_3&:=-\f{h'(1)}{3 h(1)}\chi_{j-2}+i \kappa_0\sigma_0\left(-\f{1}{2}+s\right)\,.
\end{align*}
Likewise
\begin{align*}
&(L^{[2]})_{-2}^{j-1}G_{j-1} (L^{[1]})_{-1}^j\\
&=-\f{1}{4}
\begin{pmatrix} \f{h'(1)}{3 h(1)}  \\
i \kappa_0\sigma_0(-\f{3}{2}+s)
    \end{pmatrix}
 \begin{pmatrix}
     i \kappa_0 V( \f{3}{2}+s)  & 1
      \end{pmatrix}
-\f{d_{j-1}}{4}
\begin{pmatrix}
\alpha_4\\
-\alpha_5(-\f{3}{2}+s)
\end{pmatrix}
\begin{pmatrix}
     \alpha_6(\f{3}{2}+s)
      &\chi_{j-1}-i \kappa_0 V(\f{1}{2}+s)
\end{pmatrix}
\end{align*}
with
\begin{align*}
\alpha_4&:= i \kappa_0\sigma_0\left( \f{1}{2}+s\right)-\chi_{j-1}\f{h'(1)}{3 h(1)}\,,\\
\alpha_5&:=\kappa_0^2\sigma_1\left(\f{1}{2}+s\right)+i \kappa_0\sigma_0\chi_{j-1}\,,\\
\alpha_6&:=i \kappa_0 V\chi_{j-1}+F'''(1)\kappa_0^2\left(\f{1}{2}+s\right)\,.
\end{align*}
At last,
\begin{align*}
&(L^{[1]})_{-1}^{j-2}G_{j-2}(L^{[1]})_{-1}^{j-1}G_{j-1}(L^{[1]})_{-1}^{j}\\
&=-\f{1}{8}
\begin{pmatrix}
1 \\
i \kappa_0 V(-\f{3}{2}+s)
\end{pmatrix}
\begin{pmatrix}
     i \kappa_0 V( \f{3}{2}+s)  & 1
\end{pmatrix}\\
&\quad-\f{d_{j-1}}{8}\left(i \kappa_0 V\left(\f{1}{2}+s\right)-\chi_{j-1}\right)
\begin{pmatrix}
    1 \\
i \kappa_0 V(-\f{3}{2}+s)
\end{pmatrix}
\begin{pmatrix}
\alpha_6( \f{3}{2}+s)
&\chi_{j-1}-i\kappa_0 V(\f{1}{2}+s)
\end{pmatrix}\\
&\quad-\f{d_{j-2}}{8}\left(\chi_{j-2}-i \kappa_0 V\left(-\f{1}{2}+s\right)\right)
\begin{pmatrix}
     i \kappa_0 V(-\f{1}{2}+s)-\chi_{j-2}  \\
    -\alpha_1(-\f{3}{2}+s)
\end{pmatrix}
\begin{pmatrix}
     i \kappa_0 V( \f{3}{2}+s)  & 1
\end{pmatrix}\\
&\quad-\f{d_{j-1}\,d_{j-2}}{8}
\left(\alpha_1\left(\f{1}{2}+s\right)
-\chi_{j-1} \left(\chi_{j-2}-i \kappa_0 V\left(-\f{1}{2}+s\right) \right)\right)\\
&\qquad\times
\begin{pmatrix}
     i \kappa_0 V(-\f{1}{2}+s)-\chi_{j-2}\\
      -\alpha_1(-\f{3}{2}+s)
\end{pmatrix}
\begin{pmatrix}
\alpha_6( \f{3}{2}+s)& \chi_{j-1}-i \kappa_0 V(\f{1}{2}+s)
\end{pmatrix}\,.
\end{align*}

Inserting the foregoing in Proposition~\ref{prop-rmc} yields
\begin{align*}
\Gamma&=
\begin{pmatrix}-i\omega_{j'}(\xi_0)&1\end{pmatrix}\cN \begin{pmatrix}1\\i\omega_j(\xi_0)\end{pmatrix}
=\mathfrak{R}_1+ \mathfrak{R}_2+ \mathfrak{R}_3+ \mathfrak{R}_4
\end{align*}
with
\begin{align*}
\mathfrak{R}_1
&=\f{1}{2}\left(
\kappa_0\sigma_3\left(s\left(\omega_j-\omega_{j'}\right)-\f{3}{2}\left(\omega_{j'}+\omega_j\right)\right)
-\sigma_2\,\omega_j\omega_{j'}
+\kappa_0^2\sigma_4\left(s^2-\f{9}{4}\right)\right)\\
&\quad
+\f{1}{4}\left(-\omega_{j'}+\kappa_0 V\left(-\f{3}{2}+s\right)\right)
\left(\kappa_0\sigma_0\left(\f{3}{2}+s\right)
+\omega_j\f{h'(1)}{3 h(1)}\right)\\
&\quad+\f{1}{4}\left(-\omega_{j'}\f{h'(1)}{3 h(1)}
+\kappa_0\sigma_0\left(-\f{3}{2}+s\right)\right)
\left(\kappa_0 V\left(\f{3}{2}+s\right)+\omega_j\right)\\
&\quad+\f{1}{8}\left(-\omega_{j'}+\kappa_0 V\left(-\f{3}{2}+s\right)\right)
\left(\kappa_0 V\left( \f{3}{2}+s\right)+\omega_j\right)\,,
\end{align*}
\begin{align*}
\mathfrak{R}_2&=\f{d_{j-2}}{8}\left(
-\omega_{j'}
\left(i \kappa_0 V\left(-\f{1}{2}+s\right)-\chi_{j-2}\right)
+i\alpha_1\left(-\f{3}{2}+s\right)\right)\\
&\qquad\times
\left[-2\left(i\alpha_2\left(\f{3}{2}+s\right)
+\omega_j\alpha_3\right)
+\left(\chi_{j-2}-i \kappa_0 V\left(-\f{1}{2}+s\right)\right)
\left(\kappa_0 V\left( \f{3}{2}+s\right)+\omega_j\right)
\right]\,,
\end{align*}
\begin{align*}
\mathfrak{R}_3&=\f{d_{j-1}}{8}\left(-i\alpha_6\left(\f{3}{2}+s\right)
+\omega_j\left(\chi_{j-1}-i \kappa_0 V\left(\f{1}{2}+s\right)\right)\right)\\
&\qquad\times\left[2\left(-\omega_{j'}\alpha_4
+i\alpha_5\left(-\f{3}{2}+s\right)\right)
+\left(i \kappa_0 V\left(\f{1}{2}+s\right)-\chi_{j-1}\right)\,
\left(-\omega_{j'}+\kappa_0 V\left(-\f{3}{2}+s\right)\right)
\right]\,,
\end{align*}
and
\begin{align*}
\mathfrak{R}_4&:=\f{d_{j-1}\,d_{j-2}}{8}
\left(\alpha_1\left(\f{1}{2}+s\right)
-\chi_{j-1} \left(\chi_{j-2}-i \kappa_0 V\left(-\f{1}{2}+s\right) \right)\right)\\
&\qquad\times
\left(i\omega_{j'}\left(i \kappa_0 V\left(-\f{1}{2}+s\right)-\chi_{j-2}\right)
+\alpha_1\left(-\f{3}{2}+s\right)\right)\\
&\qquad\times
\left(\alpha_6\left(\f{3}{2}+s\right)
+i\omega_j\left(\chi_{j-1}-i \kappa_0 V\left(\f{1}{2}+s\right)\right)\right)\,,
\end{align*}
where we omit to specify that $\omega_j$ and $\omega_{j'}$ are evaluated at $\xi_0$.

Finally we specialize the discussion to power laws so as to expand in $V^{-1}$ in the limit $V\to\infty$. Actually we find it more convenient to carry out the expansion in $s^{-1}$ where $s$ is related to $V$ by
\[
s=\frac{\sqrt{5}}{2}\frac{V}{\sqrt{P'(1)}}
=\frac{\sqrt{5}}{2}\frac{V}{\sqrt{\gamma\,T}}\,.
\]
To begin with, we compute
\begin{align*}
F''(\rho)&=T\,\gamma\,\rho^{\gamma-2}\,,&
h(\rho;V)&=\frac{V^2}{\rho^3}-T\,\gamma\,\rho^{\gamma-2}\,,&
\end{align*}
and
\begin{align*}
\omega_j&=\sqrt{1+\gamma\,T\,\kappa_0^2\,\left(\frac32+s\right)^2}\,,&
\omega_{j'}&=\sqrt{1+\gamma\,T\,\kappa_0^2\,\left(-\frac32+s\right)^2}\,.
\end{align*}
Therefore
\begin{align*}
\kappa_0&=\frac{1}{\sqrt{h(1)}}=\frac{1}{V}\,\frac{1}{\sqrt{1-\frac54\,s^{-2}}}
\end{align*}
so that
\begin{align*}
\kappa_0\,V&=1+\frac58 s^{-2}+\cO(s^{-4})\,,&
\gamma\,T\,\kappa_0^2\,s^2&=\frac54\left(1+\frac{5}{4}s^{-2}\right)+\cO(s^{-4})\,,
\end{align*}
and
\begin{align*}
\omega_j&=\frac32\,\sqrt{1+\frac53 s^{-1}+\frac{35}{18}s^{-2}+\frac{25}{12} s^{-3}+\cO(s^{-4})}\\
&=\frac32\left(1+\frac12\left(\frac53 s^{-1}+\frac{35}{18}s^{-2}+\frac{25}{12} s^{-3}\right)
-\frac18\left(\frac53 s^{-1}+\frac{35}{18}s^{-2}\right)^2
+\frac{1}{16}\left(\frac53 s^{-1}\right)^3+\cO(s^{-4})\right)\\
&=\frac32+\frac54 s^{-1}+\frac{15}{16}s^{-2}
+\frac{25}{32}s^{-3}+\cO(s^{-4})\,.
\end{align*}
Thus
\begin{align*}
\omega_{j'}&=
\frac32-\frac54 s^{-1}+\frac{15}{16}s^{-2}
-\frac{25}{32}s^{-3}+\cO(s^{-4})\,,
\end{align*}
and
\begin{align*}
\chi_{j-1}&=i\,(\kappa_0\,V-\omega_j)
\,=\,-i\,\left(\frac12+\frac54 s^{-1}
+\frac{5}{16}s^{-2}
+\frac{25}{32}s^{-3}\right)+\cO(s^{-4})\,,\\
\chi_{j-2}&=i\,(2\kappa_0\,V-\omega_j)
\,=\,-i\,\left(-\frac12+\frac54 s^{-1}
-\frac{5}{16}s^{-2}
+\frac{25}{32}s^{-3}\right)+\cO(s^{-4})\,,
\end{align*}
\begin{align*}
d_{j-1}&=\frac{1}{\chi_{j-1}^2+\omega_{j-1}^2}
=\frac{1}{\chi_{j-1}^2+1+\gamma T\kappa_0^2\left(\frac12+s\right)^2}
=\frac12+\cO(s^{-4})\,,\\
d_{j-2}&=\frac{1}{\chi_{j-2}^2+\omega_{j-2}^2}
=\frac{1}{\chi_{j-2}^2+1+\gamma T\kappa_0^2\left(-\frac12+s\right)^2}
=\frac12+\cO(s^{-4})\,,
\end{align*}
\begin{align*}
i \kappa_0 V\left(\f{1}{2}+s\right)-\chi_{j-1}
&=i\,\left(s+1+\frac{15}{8}s^{-1}+\frac58 s^{-2}\right)+\cO(s^{-3})\,,\\
i \kappa_0 V\left(-\f{1}{2}+s\right)-\chi_{j-2}
&=i\,\left(s-1+\frac{15}{8}s^{-1}-\frac58 s^{-2}\right)+\cO(s^{-3})\,,
\end{align*}
\begin{align*}
\alpha_6&=i \kappa_0 V\chi_{j-1}+(\gamma-2)T\gamma\kappa_0^2\left(\f{1}{2}+s\right)\\
&=\frac12+\frac54(\gamma-1)\,s^{-1}+\frac58(\gamma-1)s^{-2}
+\frac{25}{16}(\gamma-1)s^{-3}+\cO(s^{-4})\,,\\
\alpha_1&=i\kappa_0 V\chi_{j-2}+(\gamma-2)T\gamma\kappa_0^2\left(-\f{1}{2}+s\right)\\
&=-\frac12+\frac54(\gamma-1)\,s^{-1}-\frac58(\gamma-1)s^{-2}
+\frac{25}{16}(\gamma-1)s^{-3}+\cO(s^{-4})\,.
\end{align*}

Likewise
\begin{align*}
\frac{h'(1)}{h(1)}
&=-\frac{3\,V^2+\gamma T\,(\gamma-2)}{V^2-\gamma T}
=-3-\frac54\frac{\gamma+1}{s^2-\frac54}\\
&=-3-\frac54(\gamma+1)\,s^{-2}+\cO(s^{-4})\,,\\
\frac{h''(1)}{h(1)}
&=\frac{12\,V^2-\gamma T\,(\gamma-2)(\gamma-3)}{V^2-\gamma T}
=12-\frac54\frac{(\gamma+1)(\gamma-6)}{s^2-\frac54}\\
&=12-\frac54(\gamma+1)(\gamma-6)\,s^{-2}+\cO(s^{-4})\,.
\end{align*}
Therefore
\begin{align*}
\kappa_0\,\sigma_0
&=\kappa_0V\bigg(\f{1}{2}+\f{h'(1)}{3h(1)}\bigg)
=-\frac12-\frac{5}{48}(4\gamma+7)s^{-2}+\cO(s^{-4})\,,\\
\kappa_0^2\,\sigma_1&=\kappa_0^2\left(\f{F^{(3)}(1)h'(1)}{3 h(1)}-\f{F^{(4)}(1)}{4}\right)
=-\frac{5}{16}(\gamma-2)(\gamma+1)s^{-2}+\cO(s^{-4})\,,\\
\sigma_2&=\frac{3}{16}\left(\left(\frac{h'(1)}{h(1)}\right)^2-\frac14\frac{h''(1)}{h(1)}\right)
=\frac98+\frac{15}{256}(\gamma+1)(\gamma+18)s^{-2}+\cO(s^{-4})\,,
\end{align*}
\begin{align*}
\kappa_0\,\sigma_3&=\kappa_0V\bigg(\sigma_2+\f{1}{4}+\f{h'(1)}{3h(1)}\bigg)
=\frac38+\frac{15}{64}\left(1+(\gamma+1)\left(\frac14\gamma+\frac{49}{18}\right)\right)s^{-2}
+\cO(s^{-4})\,,\\
\kappa_0^2\,\sigma_4&=
\kappa_0^2\left(F^{(3)}(1)\sigma_2-\f{F^{(4)}(1)h'(1)}{6\,h(1)}+\f{F^{(5)}(1)}{24}\right)
=\frac{5}{96}(\gamma-2)(\gamma^2+5\gamma+3)\,s^{-2}
+\cO(s^{-4})\,.
\end{align*}
Thus\begin{align*}
\alpha_4&= i \kappa_0\sigma_0\left( \f{1}{2}+s\right)-\chi_{j-1}\f{h'(1)}{3 h(1)}
=-i\left(
\frac12\,s+\frac34+\frac{5}{48}(4\gamma+19)\,s^{-1}\right)
+\cO(s^{-2})\,,\\
\alpha_3&=-\f{h'(1)}{3 h(1)}\chi_{j-2}+i \kappa_0\sigma_0\left(-\f{1}{2}+s\right)
=-i\left(
\frac12\,s-\frac34+\frac{5}{48}(4\gamma+19)\,s^{-1}\right)
+\cO(s^{-2})\,,
\end{align*}
and
\begin{align*}
\alpha_5&=\kappa_0^2\sigma_1\left(\f{1}{2}+s\right)+i \kappa_0\sigma_0\chi_{j-1}
=-\frac14-\frac{5}{16}\gamma(\gamma-1)\,s^{-1}
-\frac{5}{96}(3\gamma^2+\gamma+4)\,s^{-2}+\cO(s^{-3})\,,\\
\alpha_2&=i\kappa_0\sigma_0\chi_{j-2}+\kappa_0^2\sigma_1\left(-\f{1}{2}+s\right)
=\frac14-\frac{5}{16}\gamma(\gamma-1)\,s^{-1}
+\frac{5}{96}(3\gamma^2+\gamma+4)\,s^{-2}+\cO(s^{-3})\,.
\end{align*}

Then we compute that
\begin{align*}
\mathfrak{R}_1
&=-\frac18 s^2+\f{1}{64}\left(\frac53\gamma^3+5\gamma^2-25\gamma-\frac{103}{3}\right)+\cO(s^{-1})\,,
\end{align*}
\begin{align*}
\mathfrak{R}_2+\mathfrak{R}_3
&=\f{1}{32}\left((5\gamma+1)s^2
-\frac{25}{8}\gamma^3+\frac52\gamma^2+\frac{475}{24}\gamma+\frac{115}{6}\right)
+\cO(s^{-1})\,,
\end{align*}
\begin{align*}
\mathfrak{R}_4
&=\f{1}{32}
\left(-(5\gamma-3)s^2
+\frac{125}{64}\gamma^3-\frac{425}{64}\gamma^2
-\frac{505}{64}\gamma
-\frac{83}{64}\right)+\cO(s^{-1})\,.
\end{align*}
This yields Lemma~\ref{dl-c-s}.

\bibliographystyle{alphaabbr}
\bibliography{refep}
\end{document}